\newcommand{\ve}{\varepsilon}
\newcommand{\ddbar}{\sqrt{-1} \partial \overline{\partial}}
\newcommand{\ol}{\overline}
\newcommand{\MA}{Monge-Amp\`{e}re}
\newcommand{\ri}{\rightarrow}
\newcommand{\ov}[1]{\overline{#1}}
\newcommand{\de}{\partial}
\newcommand{\dbar}{\overline{\partial}}
\newcommand{\ti}{\tilde}
\newtheorem*{claim*}{Claim}
\begin{document}
	\newcounter{theor}
	\setcounter{theor}{1}
	\newtheorem{claim}{Claim}
	\newtheorem{theorem}{Theorem}[section]
	\newtheorem{lemma}[theorem]{Lemma}
	\newtheorem{corollary}[theorem]{Corollary}
	\newtheorem{proposition}[theorem]{Proposition}
	\newtheorem{prop}{Proposition}[section]
	\newtheorem{question}{question}[section]
	\newtheorem{defn}{Definition}[section]
	\newtheorem{remark}{Remark}[section]

	\numberwithin{equation}{section}
	
	\title[Fully nonlinear elliptic equations]{Fully nonlinear elliptic equations with gradient terms on compact almost Hermitian manifolds}
	
	\keywords{Fully nonlinear elliptic equations, Almost Hermitian manifolds, \MA~equation, Deformed Hermitian-Yang-Mills equation}
	
	\author[L. Huang]{Liding Huang}
	\address{Westlake Institute for Advanced Study (Westlake University), 18 Shilongshan Road, Cloud Town, Xihu District, Hangzhou, P.R. China}
	\email{huangliding@westlake.edu.cn}
	
	\author[J. Zhang]{Jiaogen Zhang}
	\address{School of Mathematical Sciences, University of Science and Technology of China, Hefei 230026, P.R. China}
	\email{zjgmath@ustc.edu.cn}

	\begin{abstract}
		We establish second order estimates for a general class of fully nonlinear elliptic equations with gradient terms on almost Hermitian manifolds including the deformed Hermitian-Yang-Mills equation and the equation in the proof of Gauduchon conjecture by Sz\'ekelyhidi-Tosatti-Weinkove. As applications, we also consider the existence of \MA~ equation and Hessian equations.
	\end{abstract}
	\maketitle

	\section{Introduction}\label{introduction}
	
	Let $(M,\chi,J)$ be a compact almost Hermitian manifold of real dimension $2n$, and $\omega$ is a fixed real $(1,1)$-form on $(M,J)$. For an arbitrary smooth function $u$, we write
	\[
	\omega_{u} := \omega+\ddbar{u}+Z(\partial u)
	=\omega+\frac{1}{2}(dJdu)^{(1,1)}+Z(\partial u),
	\]
where $Z(\partial u)$ denotes a smooth (1,1)-form depending on $\partial u$  linearly which will be specified later, and let $\mu(u)=(\mu_{1}(u),\ldots,\mu_{n}(u))$ be the eigenvalues of $\omega_{u}$ with respect to $\chi$. For the sake of notational convenience, we sometimes denote $\mu_{i}(u)$ by $\mu_{i}$ when no confusion will arise.
	In the current paper,  we consider the following fully nonlinear elliptic equations of
	the form
	\begin{equation}\label{nonlinear equation}
		F(\omega_{u}) = f(\mu_{1},\cdots,\mu_{n}) = h,
	\end{equation}
	where $h\in C^{\infty}(M)$ and $f$ is a smooth symmetric function in $\mathbb{R}^{n}$.
	
	
	\medskip
	The equation \eqref{nonlinear equation} covers many important elliptic equations in (almost) complex geometry. 
	A typical example of \eqref{nonlinear equation} is the following equation:
	\begin{equation}\label{n-1}
		\Big(\eta+\frac{1}{n-1}\big((\Delta_{\chi}u)\chi-\ddbar u\big)+W(\partial u)\Big)^{n} = e^{h}\chi^{n}.
	\end{equation}
	Here $\eta$ is an almost Hermitian metric, $\Delta_{\chi}$ denotes the canonical Laplacian operator of $\chi$ and $W=W(\partial u)$ is a Hermitian tensor that linearly depends on $\partial u$. On a Hermitian manifold,
	the equation \eqref{n-1} was introduced by Popovici \cite{Popovici15} and Tosatti-Weinkove \cite{TW19} independently. Recently, Sz\'ekelyhidi-Tosatti-Weinkove \cite{STW17}  confirmed the famous Gauduchon conjecture \cite{GP} by solving equation \eqref{n-1}.  When $W\equiv  0$, 
	the equation \eqref{n-1} is  the notion of  Monge-Amp\`ere equation for $(n-1)$-plurisubharmonic functions in pioneer works of Fu-Wang-Wu \cite{FWW10,FWW15}.

	\medskip
	The fully nonlinear elliptic equations with gradient terms on Hermitian manifolds have been researched extensively, we refer the reader to \cite{TW20,YR,GQR,FGZ,GN,YR20} and references therein.  On the framework of almost Hermitian manifolds,  to our knowledge most of  researches toward equation \eqref{nonlinear equation} are independent of $\partial u$. Inspired by these works, we shall consider the equation \eqref{nonlinear equation} on compact almost Hermitian manifolds.

	\medskip
	Let $\Gamma_{n}$ be the positive orthant in $\mathbb{R}^{n}$ and $\Gamma_{1}=\{\mu\in\mathbb{R}^{n}  \ :\ \sum_{i}\mu_{i}>0\}$.  In this paper, we always assume that $f$ is defined in a  symmetric open  and convex cone $\Gamma \subset \Gamma_{1}\subsetneq\mathbb{R}^{n}$ satisfying $\Gamma+\Gamma_{n}\subset \Gamma$, i.e. for any $\mu\in\Gamma $ and $\mu'\in \Gamma_{n}$, $\mu+\mu'\in\Gamma$. 
	Furthermore, modifying the setup of Sz\'ekelyhidi \cite{Szekelyhidi18}, suppose that
	\begin{enumerate}[(i)]\setlength{\itemsep}{1mm}
		\item $f_{i}=\frac{\de f}{\de\mu_{i}}>0$ for all $i$ and $f$ is concave  in $\Gamma$,
		\item $\sup_{\partial\Gamma}f<h<\sup_{\Gamma}f$,
		\item for any constant $\sup_{\partial\Gamma}f<\sigma< \sigma'<\sup_{\Gamma} f$, 
		there exists a positive constant $N$, depending only on $\sigma$ and $\sigma'$, such that $ \Gamma^{\sigma}+N \textbf{1}\subset \Gamma^{\sigma'}.$
	\end{enumerate}
	Here the sublevel set
	$\Gamma^{\sigma}=\{\mu\in\Gamma \ :\ f(\mu)>\sigma\}$
	is convex open for any $\sigma>\sup_{\partial \Gamma}f$ and
	\[
	\sup_{\de\Gamma}f = \sup_{\lambda'\in\de\Gamma}\limsup_{\lambda\rightarrow\lambda'}f(\lambda), \qquad \textbf{1}=(1,\ldots,1)\in\mathbb{R}^{n}.\]
	
	\begin{remark} {\normalfont The original setup in \cite{Szekelyhidi18} assume the   symmetric open and convex cone $\Gamma\subset\Gamma_1 \subsetneq\mathbb{R}^{n}$  satisfying
			\begin{equation}\label{assumption of cone}
				\text{the vertex of $\Gamma$ is at the origin and\ } \Gamma_{n}\subset\Gamma,
			\end{equation}
			$f$ is defined in $\Gamma$ 
			and satisfies (i), (ii) and 
			\begin{equation*}
				\textrm{(iii') \quad for any}~\sigma<\sup_{\Gamma} f~\textrm{and}~\mu\in \Gamma, \textrm{we have}~\lim_{t\rightarrow \infty}f(t\mu)>\sigma.
			\end{equation*}		
			Note that (iii') implies (iii) via \cite[Lemma 9]{Szekelyhidi18} if we further assume $\Gamma$  satisfies \eqref{assumption of cone}. Motivated by Mirror Symmetry and Mathematical Physics, Jacob-Yau \cite{JY} studied the  equation
			\begin{equation*}
				\sum_{i}\mathrm{arccot}\,\mu_{i}=\hat{\theta}, \quad \text{in}~\Gamma_{D}=\big\{\mu\in\mathbb{R}^{n}\ :\ 0<\sum_{i}\mathrm{arccot}\,\mu_{i}<\pi \big\}
			\end{equation*}
			for some real constant $\hat{\theta}$.  
			We can verify that this equation satisfies (iii) (see \S 2) while not for (iii'), and $\Gamma_{D}$ satisfies the assumption  $\Gamma+\Gamma_{n}\subset \Gamma$ rather than \eqref{assumption of cone}.}
	\end{remark}

	We have the following estimate:
	\begin{theorem}\label{main estimate}
		Let $(M,\chi,J)$ be a compact almost Hermitian manifold of real dimension $2n$.  Suppose that $u$ (resp. $\underline{u}$) is a smooth solution (resp. $\mathcal{C}$-subsolution) of (\ref{nonlinear equation}).  Then we have 
		\begin{equation*}
			\| u\|_{C^{2}(M,\chi)}\leq C(1+\sup_{M}|\partial u|_{\chi}^{2}),
		\end{equation*}
		where  $C$ is a constant depending on  $\underline{u}$, $h$, $Z$, $\omega$, 
		$f$, $\Gamma$ and $(M,\chi,J)$.
	\end{theorem}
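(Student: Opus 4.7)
The plan is to establish the bound via a maximum-principle argument on a carefully chosen test function, in the framework of Sz\'ekelyhidi \cite{Szekelyhidi18} extended to the almost Hermitian setting with the linear gradient term $Z(\partial u)$. The $C^{0}$ portion of $\|u\|_{C^{2}}$ is absorbed by the subsolution $\underline u$ through a standard comparison/ABP-type argument (possibly after normalizing $\sup(u-\underline u) = 0$), and on an almost Hermitian manifold the real Hessian of $u$ is controlled by the eigenvalues of $\omega_{u}$ together with $|\partial u|^{2}$ via Nijenhuis-type identities relating $\de^{2} u$ to $\iddbar u$. The essential task is therefore the pointwise bound $\lambda_{1} \leq C(1 + \sup_{M}|\partial u|^{2}_{\chi})$, where $\lambda_{1}$ denotes the largest eigenvalue of $\omega_{u}$ with respect to $\chi$.

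For this I would apply the maximum principle to
\[
Q = \log \lambda_{1} + \varphi(|\partial u|^{2}_{\chi}) + \psi(u - \underline u),
\]
with $\varphi(s) = -\tfrac{1}{2}\log(K - s)$ for $K = 2\sup_{M}|\partial u|^{2}_{\chi} + 1$, and $\psi(t) = -A\log(1 + (t - \inf(u-\underline u))/B)$ for a large constant $A$ and suitable $B$, so that $\varphi', \varphi'' > 0$ and $\psi' > 0$, $\psi'' < 0$. At a maximum point $x_{0}$ of $Q$, choose a $\chi$-unitary frame diagonalizing $\omega_{u}$ with eigenvalues $\mu_{1} \geq \cdots \geq \mu_{n}$ and perturb $\lambda_{1}$ in the standard way so that it is smooth near $x_{0}$; then $\de Q = 0$ and $\mathcal{L} Q := F^{i\bar j} Q_{i\bar j} \leq 0$ at $x_{0}$. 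Differentiating $F(\omega_{u}) = h$ twice produces the identity
\[
F^{i\bar j}(\omega_{u})_{1\bar 1, i\bar j} = h_{1\bar 1} - F^{i\bar j, k\bar l}(\omega_{u})_{i\bar j, 1}(\omega_{u})_{k\bar l, \bar 1} + E,
\]
where $-F^{i\bar j, k\bar l}(\omega_{u})_{i\bar j, 1}(\omega_{u})_{k\bar l, \bar 1} \geq 0$ by concavity of $f$, and $E$ collects the commutator errors coming from the torsion of the almost Hermitian structure together with the contributions $F^{i\bar j}(\nabla^{2}Z)(\partial u)$ and $F^{i\bar j}(\nabla Z)(\nabla^{2}u)$ produced by differentiating $Z(\partial u)$. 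The piece $F^{i\bar j}(\nabla Z)(\nabla^{2}u)$ is the main new difficulty relative to the gradient-free case; it will be absorbed via Cauchy--Schwarz against the positive term $\varphi''(|\partial u|^{2})F^{i\bar j}(|\partial u|^{2})_{i}(|\partial u|^{2})_{\bar j}$ arising from $\mathcal{L}\varphi(|\partial u|^{2})$, which is precisely the mechanism producing the $(1 + \sup|\partial u|^{2})$ factor on the right-hand side of the final bound.

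The main obstacle I anticipate is controlling the remaining third-order bad terms of the form $\lambda_{1}^{-1}F^{i\bar j}(\omega_{u})_{i\bar j, 1}\cdot(\text{torsion or }Z\text{-contribution})$. These will be tamed by the dichotomy of Sz\'ekelyhidi (Proposition 5 of \cite{Szekelyhidi18}): either there exists an index $i_{0}$ with $\mu_{i_{0}}$ bounded away from $\mu_{1}$, in which case an Andrews--Gerhardt-type argument shows that the non-negative concavity term $-F^{i\bar j, k\bar l}(\omega_{u})_{i\bar j, 1}(\omega_{u})_{k\bar l, \bar 1}$ dominates the bad third-order terms; or all eigenvalues are comparable to $\mu_{1}$, in which case the $\mathcal{C}$-subsolution hypothesis together with assumption (iii) forces $\mathcal{F} := \sum_{i} F^{i\bar i} \geq c_{0} > 0$, and the negative contribution $\psi''(u - \underline u) F^{i\bar j}(u-\underline u)_{i}(u-\underline u)_{\bar j}$ from $\mathcal{L}\psi$, combined with the linearized subsolution inequality $F^{i\bar j}(\omega_{\underline u} - \omega_{u})_{i\bar j} \geq c_{1}\mathcal{F}$, provides the required sign. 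Combining the two alternatives at $x_{0}$ yields $\lambda_{1}(x_{0}) \leq C(1 + \sup_{M}|\partial u|^{2}_{\chi})$, and the shape of $Q$ then propagates this pointwise bound to all of $M$, completing the proof.
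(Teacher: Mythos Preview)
Your overall strategy—maximum principle on a test function, the Sz\'ekelyhidi subsolution dichotomy, absorbing $Z$-terms via Cauchy--Schwarz against $\varphi''$—is in the right spirit, but there is a genuine gap at the very first reduction.

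You take $\lambda_{1}$ to be the largest eigenvalue of $\omega_{u}$ with respect to $\chi$ and claim that ``the real Hessian of $u$ is controlled by the eigenvalues of $\omega_{u}$ together with $|\partial u|^{2}$ via Nijenhuis-type identities.'' This is false on an almost Hermitian manifold. The $J$-invariant part $(\nabla^{2}u)^{J}$ is indeed $\ddbar u$ up to first-order torsion, but the $J$-anti-invariant part—equivalently the pure-type derivatives $e_{i}e_{j}(u)$—is \emph{not} recoverable from $\mu(u)$ and $|\partial u|$. There are no local holomorphic coordinates, so one cannot run Evans--Krylov in complex coordinates afterward; the full real Hessian must be bounded directly. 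This is precisely the obstacle identified in \cite{CTW19}, and the paper follows that route: $\lambda_{1}$ is taken to be the largest eigenvalue of the \emph{real} Hessian $\nabla^{2}u$, not of $\omega_{u}$.

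This choice of $\lambda_{1}$ forces a substantially richer test function and case analysis than what you outline. The paper uses
\[
Q=\log\lambda_{1}+\xi(|\rho|^{2})+\eta(|\partial u|^{2})+e^{-Au},\qquad \rho=\nabla^{2}u+N\chi,\ N=\sup_{M}|\nabla^{2}u|+1,
\]
where the extra term $\xi(|\rho|^{2})$ is essential: its linearization produces the positive third-order quantity $\sum_{\alpha,\beta}F^{i\bar i}|e_{i}(u_{\alpha\beta})|^{2}/N^{2}$, which is exactly what is needed to absorb the commutator errors generated when one differentiates the equation in the \emph{real} direction $V_{1}$ rather than a $(1,0)$ direction. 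Your test function, built only from $\mu_{1}$, $|\partial u|^{2}$ and $u-\underline u$, does not manufacture this term and would leave those errors uncontrolled. Moreover, the final step is not an Andrews--Gerhardt eigenvalue-gap alternative on $\mu$; the paper introduces index sets $J$ and $S$ comparing $F^{i\bar i}$ to one another and to the gradient of the complex Hessian, and in the hardest case uses a delicate decomposition of the real eigenvector $V_{1}$ along the complex frame (writing $V_{1}=\sqrt{2}\,\overline{W_{1}}-\sqrt{-1}JV_{1}$) to play the concavity term $-F^{i\bar k,j\bar l}V_{1}(\tilde g_{i\bar k})V_{1}(\tilde g_{j\bar l})$ against $\sum_{i\in I}F^{i\bar i}|(\lambda_{1})_{i}|^{2}/\lambda_{1}^{2}$. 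None of this machinery is visible once you collapse the problem to $\mu_{1}$.
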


	
	As an application, to begin, we solve the equation \eqref{n-1}. We have
	
	\begin{theorem}\label{n-1 MA equation}
		Let $(M,\chi,J)$ be a compact almost Hermitian manifold of real dimension $2n$ and $\eta$ be an almost Hermitian metric. There exists a unique pair $(u,c)\in C^{\infty}(M)\times\mathbb{R}$ such that
		\begin{equation}\label{sove n-1 MA equation}
			\begin{cases}
				\ \left(\eta+\frac{1}{n-1}\big((\Delta_{\chi} u)\chi-\ddbar u\big)+W(\partial u)\right)^{n} = e^{h+c}\chi^{n}, \\[2mm]
				\ \eta+\frac{1}{n-1}\big((\Delta_{\chi} u)\chi-\ddbar u\big)+W(\partial u)> 0, \\[1.5mm]
				\ \sup_{M}u = 0.
			\end{cases}   
		\end{equation}
	\end{theorem}
	For the complex Monge--Amp\`ere equation, Yau \cite{Yau78} solved it on a K\"ahler manifold and confirmed the famous Calabi's conjecture (see \cite{Calabi57}). In the non-K\"ahler setting, we refer the reader to \cite{Cherrier87,CTW19,GL10,Hanani96,TW10a,TW10b,ZZ11}. The classical complex Hessian equations also have been studied extensively, see \cite{DK17,Hou09,HMW10,Szekelyhidi18,Zhang17,CHZ17}. Similar to Theorem \ref{n-1 MA equation}, we can solve the complex Monge-Amp\`ere equation and complex Hessian equations  with gradient terms.

	\begin{theorem}\label{complex Hessian equation}
		Let $(M,\chi,J)$ be a compact almost Hermitian manifold of real dimension $2n$ and $\omega$ be a smooth $k$-positive real $(1,1)$-form. For any integer $1\leq k\leq n$, there exists a unique pair $(u,c)\in C^{\infty}(M)\times\mathbb{R}$ such that
	
	\begin{equation}\label{complex Hessian equation 1}
			\begin{cases}
				\ \omega_{u}^{k}\wedge \chi^{n-k}=e^{h+c}\chi^{n}, \\[1mm]
				\ \frac{\omega_{u}^{i}\wedge \chi^{n-i}}{\chi^{n}}>0, \quad i=1,2,\cdots,k, \\[0.5mm]
				\ \sup_{M}u = 0.
			\end{cases}
		\end{equation}
	\end{theorem}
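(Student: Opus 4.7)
The plan is to solve \eqref{complex Hessian equation 1} by the continuity method, with Theorem \ref{main estimate} as the core $C^{2}$ input. Set $f(\mu)=\bigl(\sigma_{k}(\mu)\bigr)^{1/k}$ on $\Gamma=\Gamma_{k}$ and consider the family
\begin{equation*}
\omega_{u_{t}}^{k}\wedge\chi^{n-k}=e^{th+c_{t}+(1-t)F_{0}}\chi^{n}, \qquad \sup_{M}u_{t}=0, \quad t\in[0,1],
\end{equation*}
where $F_{0}\in C^{\infty}(M)$ is defined by $\omega^{k}\wedge\chi^{n-k}=e^{F_{0}}\chi^{n}$ (well-defined since $\omega$ is $k$-positive), so that $(u_{0},c_{0})=(0,0)$ is the trivial starting solution. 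One checks that $\sigma_{k}^{1/k}$ on $\Gamma_{k}$ fulfills the structural conditions (i)--(iii), with $\Gamma_{k}+\Gamma_{n}\subset\Gamma_{k}$ and $\sup_{\partial\Gamma_{k}}f=0<\sup_{\Gamma_{k}}f=+\infty$.

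Openness of the set $T\subset[0,1]$ of parameters admitting a solution follows from the implicit function theorem in H\"older spaces: the linearisation at $u_{t}$ is a second order elliptic operator whose kernel consists of constants, and the one-dimensional cokernel is absorbed by the free parameter $c_{t}$.

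The main work is closedness, i.e., uniform $C^{2,\alpha}$ estimates along $T$. The $L^{\infty}$ bound is standard via an Alexandrov--Bakelman--Pucci argument or Moser iteration adapted to almost Hermitian geometry, using the normalisation $\sup_{M}u_{t}=0$. Once $\|u_{t}\|_{C^{0}}$ is controlled, $\underline{u}\equiv 0$ serves as a uniform $\mathcal{C}$-subsolution (after absorbing constants into $c_{t}$), and Theorem \ref{main estimate} yields $\|u_{t}\|_{C^{2}(M,\chi)}\leq C\bigl(1+\sup_{M}|\partial u_{t}|_{\chi}^{2}\bigr)$, so the whole problem reduces to a gradient estimate. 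This is where I expect the main obstacle: the natural approach is a contradiction/blow-up argument, rescaling around a point where $|\partial u_{t}|$ attains a diverging maximum to extract a non-constant entire $\Gamma_{k}$-plurisubharmonic limit on $\mathbb{C}^{n}$ with bounded gradient, which a Liouville-type theorem then rules out; carrying this out on an almost Hermitian background (as opposed to $\mathbb{C}^{n}$ directly) is the delicate point. With $C^{1}$ in hand, $C^{2}$ follows from Theorem \ref{main estimate}, the complex Evans--Krylov theorem provides $C^{2,\alpha}$, and standard Schauder bootstrapping promotes this to $C^{\infty}$.

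Uniqueness of the pair $(u,c)$ is then obtained in the usual way: for two solutions $(u,c)$ and $(\tilde u,\tilde c)$, apply the maximum principle to $u-\tilde u$ at its extrema; the strict monotonicity $f_{i}>0$ combined with $\Gamma_{k}+\Gamma_{n}\subset\Gamma_{k}$ forces $c=\tilde c$, and then a second application of the maximum principle, using the concavity of $f$ and the fact that the linearisation has no non-constant kernel, gives $u=\tilde u$.
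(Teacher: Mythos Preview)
Your proposal is correct and follows essentially the same route as the paper: the paper simply remarks that Theorem \ref{complex Hessian equation} is proved ``similarly'' to Theorem \ref{n-1 MA equation}, which amounts to the continuity method with $\underline{u}=0$ as $\mathcal{C}$-subsolution, closedness via Proposition \ref{main estimate 1} (the packaged $C^{\infty}$ estimate combining the zero order bound of Section \ref{zero order estimate}, Theorem \ref{Thm4.1}, the blow-up/Liouville argument for $C^{1}$, and Evans--Krylov), and openness via the Fredholm/implicit function argument. The only refinement is that $\sigma_{k}^{1/k}$ on $\Gamma_{k}$ actually satisfies the stronger condition (iii') and $\Gamma_{k}$ has vertex at the origin, so Proposition \ref{main estimate 1} applies directly and the blow-up step you flag as delicate is already handled there by citing the Liouville theorem of Sz\'ekelyhidi.
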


	\medskip
	For the deformed Hermitian-Yang-Mills (dHYM) equation
	\begin{equation}\label{DHYM}
		\phi(\mu)=\sum_{i=1}^{n}\mathrm{arccot}\,\mu_{i}=h,\qquad h\in C^{\infty}(M),
	\end{equation}
	we say \eqref{DHYM} is hypercritical (resp. supercritical) if $h\in (0,\frac{\pi}{2})$ (resp. $h\in (0,\pi)$).  Jacob-Yau \cite{JY} showed  the existence of solution for dimension 2,  and for general dimensions when $(M,\chi)$ has  non-negative orthogonal bisectional curvature in the hypercritical phase setting. Pingali \cite{PV,PV1} obtained a solution when $n=3$. In general dimensions, the equation \eqref{DHYM}
	was solved by Collins-Jacob-Yau \cite{CJY}  under the existence of $\mathcal{C}$-subsolutions. The equation \eqref{DHYM} was also studied by Leung \cite{Leung97,Leung98} to seek vector bundles over a symplectic manifold. Recently,  Zhang  and the authors \cite{HZZ} provided a priori estimates  on compact almost Hermitian manifolds for the hypercritical case. It was researched by Lin \cite{Lin20} in the supercritical phase on compact Hermitian manifolds. 
	
	\medskip
	As a corollary, using Theorem \ref{main estimate}, we are also able to derive a priori estimates for \eqref{DHYM} in the supercritical case.
	
	\begin{corollary}\label{DHYM estimates}
Let $(M,\chi,J)$ be a compact almost Hermitian manifold of real dimension $2n$. 	Suppose that $u$ (resp.  $\underline{u}$) is the solution (resp. $\mathcal{C}$-subsolution) of  equation \eqref{DHYM}
		with $h\in \big(0,\pi-\delta\big]$ is a smooth function for a constant $\delta\in (0,\frac{\pi}{2})$. Then for each $\alpha\in(0,1)$, we have 
		\begin{equation*}
			\| u\|_{C^{2,\alpha}(M,\chi)}\leq C,
		\end{equation*}
		where $C$ is a constant depending on  $\alpha$, $\underline{u}$, $h$, $\omega$, $\delta$ and $(M,\chi,J)$. 
	\end{corollary}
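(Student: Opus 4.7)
The plan is to reduce Corollary~\ref{DHYM estimates} to an application of Theorem~\ref{main estimate}, supplemented by a gradient bound, a zero-order bound, and the complex Evans-Krylov theorem. The dHYM equation fits the framework of \eqref{nonlinear equation} with $Z\equiv 0$: since $\phi(\mu)=\sum_i \mathrm{arccot}\,\mu_i$ is decreasing in each variable, I work with the equivalent formulation $f(\mu)=-h$, where $f=-\phi$ is defined on $\Gamma=\Gamma_{D}$. Condition (i) then reads $f_i=1/(1+\mu_i^2)>0$, together with concavity of $f$ on $\Gamma_D$, for which I would invoke the level-set convexity result of Collins-Jacob-Yau \cite{CJY} in the supercritical phase (possibly composing $\phi$ with a suitable monotone decreasing concave function so that the structural hypothesis of Sz\'ekelyhidi's framework holds on $\Gamma_D$). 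Condition (ii) is immediate from $\sup_{\partial \Gamma_D}f=-\pi$, $\sup_{\Gamma_D}f=0$, and $-h\in[-\pi+\delta,0)$; and (iii) is precisely what is verified in Section~2, as announced in the remark. The cone assumption $\Gamma+\Gamma_n\subset\Gamma$ is satisfied by $\Gamma_D$, which is why the relaxation from the original Sz\'ekelyhidi hypothesis \eqref{assumption of cone} is crucial here.

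Once the framework applies, Theorem~\ref{main estimate} gives
\begin{equation*}
\|u\|_{C^{2}(M,\chi)}\leq C\bigl(1+\sup_M|\partial u|_\chi^2\bigr),
\end{equation*}
so it suffices to produce a $C^1$ and a $C^0$ bound independent of $u$. For the gradient bound I would run a standard blow-up/rescaling argument: if a sequence of solutions $u_k$ satisfied $\sup_M|\partial u_k|_\chi\to\infty$, one rescales around the maximum point of $|\partial u_k|_\chi$, extracts a non-constant limit on $\mathbb{C}^n$ that solves a translation-invariant dHYM-type equation, and contradicts a Liouville-type theorem of Dinew-Kolodziej/Sz\'ekelyhidi type, adapted to the almost Hermitian setting as in \cite{HZZ}. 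The $C^0$ bound follows from the $\mathcal{C}$-subsolution by an ABP-Moser iteration in the style of Sz\'ekelyhidi~\cite{Szekelyhidi18} and \cite{CJY}; the almost complex structure requires only routine modifications of the integration by parts. Combining these with Theorem~\ref{main estimate} gives a uniform $C^2$ bound, and since $f$ is concave and uniformly elliptic on the relevant level set, the complex Evans-Krylov theorem upgrades this to $\|u\|_{C^{2,\alpha}(M,\chi)}\leq C$ for every $\alpha\in(0,1)$.

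The main obstacle is concavity of the dHYM operator in the supercritical regime. In the hypercritical case $h<\pi/2$ the solution lies in $\Gamma_n$ and $-\sum\mathrm{arccot}\,\mu_i$ is concave there by a one-variable computation, so this reduces to the setting already treated in \cite{HZZ}. In the supercritical case $h\in(0,\pi-\delta]$ one may have $\mu_i<0$, and $-\mathrm{arccot}$ is convex on the negative axis; concavity of the operator must then be extracted from the Lagrangian geometry of the level sets $\{\phi=h\}$, which is exactly the phenomenon captured by the relaxed cone assumption $\Gamma\subset\Gamma_1$ and the rephrasing of hypothesis (iii) introduced in this paper. The remaining technical point, which I expect to be routine, is carrying out the blow-up gradient estimate in the almost Hermitian category without the integrable structure simplifications available on Hermitian manifolds.
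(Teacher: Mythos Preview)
Your overall architecture is right, but there is one genuine gap and one point where the paper takes a cleaner route than you do.

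\textbf{Concavity.} You take $f=-\phi$ and then struggle with concavity in the supercritical regime. The paper sidesteps this entirely by rewriting \eqref{DHYM} as
\[
f(\mu)=\cot\phi(\mu)=\cot h,\qquad \mu\in\Gamma_D,
\]
and quoting (from \cite{WY14,CPW}) that $\cot\phi$ is concave on $\Gamma_D$ whenever $\phi(\mu)<\pi$. This is essentially your ``compose with a monotone decreasing concave function'' idea made concrete: cotangent is the right choice, and with it condition (i) holds on all of $\Gamma_D$ with no case analysis.

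\textbf{The $C^1$ estimate.} Here your proposal has a real gap. You want to run a blow-up argument and appeal to a Liouville theorem ``of Dinew--Ko{\l}odziej/Sz\'ekelyhidi type''. But the Liouville theorem in \cite[Theorem~20]{Szekelyhidi18} (and the Dinew--Ko{\l}odziej result it generalises) requires the cone $\Gamma$ to have vertex at the origin, i.e.\ the strong hypothesis (iii'), not merely (iii). The paper explicitly points out in the remark after the structural conditions that the dHYM cone $\Gamma_D$ does \emph{not} satisfy (iii'); this is precisely the reason (iii) was introduced. Consequently the blow-up route used in Proposition~\ref{main estimate 1} is unavailable for dHYM, and your rescaled limit on $\mathbb{C}^n$ cannot be killed by the Liouville theorem you cite.

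The paper instead proves the gradient bound directly by a maximum-principle argument applied to
\[
Q=e^{H(\underline{u}-u)}|\partial u|^2,\qquad H(\eta)=\tfrac{1}{3}e^{D\eta},
\]
exploiting the explicit form $F^{i\bar i}=(1+\mu_i^2)^{-1}$ and the bound $|\mu_i F^{i\bar i}|\le 1$ to close the inequality without any blow-up. Once this $C^1$ bound is in hand, Theorem~\ref{main estimate} gives $C^2$, and the Evans--Krylov step is as you describe. The $C^0$ bound is already contained in Proposition~\ref{Prop3.2}, which covers the general framework and hence the dHYM equation once the structural conditions are checked.
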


	\medskip
	We now discuss the proof of Theorem \ref{main estimate}. The zero order estimate can be proved by adapting the arguments of \cite[Proposition 11]{Szekelyhidi18} and \cite[Proposition 3.1]{CTW19}, which are based on the method of B{\l}ocki \cite{Blocki05,Blocki11}. For the second order estimate,  following the idea of \cite{CHZ21, CM21,CTW19, Szekelyhidi18} and by some delicate calculations, the real Hessian $\nabla^{2}u$ can be controlled by the first gradient quadratically as follows:
	\begin{equation}\label{estimate introduction}
		\sup_{M}|\nabla^{2}u|_{\chi} \leq C\big(1+\sup_{M}|\partial u|_{\chi}^{2}\big).
	\end{equation}

	\medskip
	The paper is organized as follows. In \S \ref{preliminaries}, we will introduce some notations, and recall the definition and an important property of $\mathcal{C}$-subsolution. We also verify that  the dHYM equation satisfying the structural conditions. The zero order estimate will be established in \S \ref{zero order estimate}. In \S \ref{second order estimate},  we shall prove the estimate \eqref{estimate introduction}. To see this, we apply the maximum principle to the quantity involving the largest eigenvalue $\lambda_{1}$ of real Hessian $\nabla^{2}u$ with respect to $\chi$ of form 
	\begin{equation*}
		Q = \log \lambda_{1}+\varphi(|\rho|_{\chi}^{2})+\psi(|\partial u|_{\chi}^{2})+e^{-Au}.
	\end{equation*} 
In \S 3.3, we establish the second order estimate via the blowup argument and Liouville type theorem \cite[Theorem 20]{Szekelyhidi18} when equation \eqref{nonlinear equation} satisfying the structural conditions (i), (ii) and (iii'). Given this, we are able to prove Theorems \ref{n-1 MA equation}-\ref{complex Hessian equation} in \S 3.4. In \S 4 we will prove Corollary \ref{DHYM estimates} by using the maximum principle to establish the $C^{1}$ estimate for \eqref{DHYM}  which  also implies the $C^{2}$ estimate. 
	
	\medskip
	{\bf Acknowledgments.}
	We are very grateful to  Professor Xi Zhang for countless advice. We would like to thank Jianchun Chu for generous discussions. We are also grateful to Rirong Yuan for his  helpful suggestions.
	
	\section{Preliminaries}\label{preliminaries}
	\subsection{Notations}
	Suppose that $(M,\chi,J)$ is an almost Hermitian manifold of real dimension $2n$. As pointed in \cite[p.1954]{CTW19}, we can define $(p,q)$-forms and operators $\de$, $\dbar$ by using the almost complex structure $J$. Let $A^{1,1}(M)$ denote the set of smooth real (1,1)-forms on $(M,J)$.
	For any $u\in C^{\infty}(M)$, we see that 
$
\ddbar u= \frac{1}{2}(dJdu)^{(1,1)}
$
	is a real $(1,1)$-form in $A^{1,1}(M)$. In the sequel, we set
	\begin{equation*}
		\omega_{u}=\omega+\ddbar u+Z(\partial u),
	\end{equation*}
	where $Z(\partial u)$ is a real  $(1,1)$-form defined by
$
Z_{i\bar j}=Z_{i\bar j}^{p}u_{p}+\overline{Z_{i\bar j}^{ p}}u_{\bar p}.
$
	
	\medskip
	For any point $x_{0}\in M$, let $(e_{1},\cdots,e_{n})$ be a local unitary $(1,0)$-frame with respect to $\chi$ near $x_{0}$, and $\{\theta^{i}\}_{i=1}^{n}$ be its dual coframe. Then in the local chart we have
	\[
	\chi=\sqrt{-1}\delta_{ij}\theta^{i}\wedge\ov{\theta}^{j}.
	\]
	Suppose that
	\[
	\omega = \sqrt{-1}g_{i\ov{j}}\theta^{i}\wedge\ov{\theta}^{j}, \qquad
	\omega_{u} = \sqrt{-1}\ti{g}_{i\ov{j}}\theta^{i}\wedge\ov{\theta}^{j},
	\]
	as well as
	\begin{equation*}
		\begin{split}
			\ti{g}_{i\ov{j}} &= g_{i\ov{j}}+\de \dbar u(e_{i},\ov{e}_{j})+Z_{i\bar j}\\
			&= g_{i\ov{j}}+e_{i}\overline{e}_{j}(u)-[e_{i},\overline{e}_{j}]^{(0,1)}(u)+u_{p}Z_{i\bar j}^{p}+u_{\bar{p}}\overline{Z_{i\bar j}^{p}},	
		\end{split}
	\end{equation*}
	where $[e_{i}, \bar{e}_{j}]^{(0,1)}$ is the $(0,1)$ part of the Lie bracket $[e_{i}, \bar{e}_{j}]$.
	Define
	\begin{equation*}
		G^{i\overline{j}}=\frac{\partial F}{\de\ti{g}_{i\ov{j}}}, \qquad
		G^{i\overline{j},k\overline{l}}=\frac{\partial^{2}F}{\de\ti{g}_{i\ov{j}}\de\ti{g}_{k\ov{l}}}.
	\end{equation*}
	After making a unitary transformation, we may assume that  $\tilde{g}_{i\overline{j}}(x_{0})=\delta_{ij}\tilde{g}_{i\overline{i}}(x_{0})$.
	We denote $\tilde{g}_{i\overline{i}}(x_{0})$ by $\mu_{i}$.
	It is useful to order $\mu_{i}$ such that
	\begin{equation}\label{mu order}
		\mu_{1}\geq\mu_{2}\geq\cdots\geq\mu_{n}.
	\end{equation}
	At $x_{0}$, we have the expressions of $G^{i\ov{j}}$ and $G^{i\bar{k},j\bar{l}}$ (see e.g. \cite{Andrews94,Gerhardt96,Spruck05})
	\begin{equation}\label{second derive of F}
		G^{i\ov{j}} = \delta_{ij}f_{i},\qquad
		G^{i\bar{k},j\bar{l}}=f_{ij}\delta_{ik}\delta_{jl}
		+\frac{f_{i}-f_{k}}{\mu_{i}-\mu_{k}}
		(1-\delta_{ik})\delta_{il}\delta_{jk},
	\end{equation}
	where the quotient is interpreted as a limit if $\mu_{i}=\mu_{j}$. Using \eqref{mu order}, we obtain (see e.g. \cite{EH89,Spruck05})
	\[
	G^{1\ov{1}} \leq G^{2\ov{2}} \leq \cdots \leq G^{n\ov{n}}.
	\]
	On the other hand, the linearized operator of equation \eqref{nonlinear equation} is
	\begin{equation}\label{L}
		L(v)=G^{i\bar{j}}\Big(e_{i}\bar{e}_{j}(v)-[e_{i},\bar{e}_{j}]^{0,1}(v)+e_{p}(v)Z_{i\bar j}^{p}+\bar{e}_{p}(v)\overline{Z_{i\bar j}^{p}}\Big).
	\end{equation}

	\subsection{$\mathcal{C}$-subsolution}
	\begin{defn}[\cite{Szekelyhidi18}]\label{sub}
		We say that a function $\underline{u}\in C^{2}(M)$ is a $\mathcal{C}$-subsolution of \eqref{nonlinear equation} if at each point $x\in M$, the set
		\begin{equation*}
			\left\{\mu\in \Gamma\ :\ f(\mu)=h(x), \ \mu-\mu(\underline{u})\in \Gamma_{n}\right\}
		\end{equation*}
		is bounded. 
	\end{defn}
	
	\medskip
	By Definition \ref{sub}, for each $\mathcal{C}$-subsolution $\underline{u}$, there are constants $\delta, R>0$ depending only on $\underline{u}$, $(M,\chi,J)$, $f$ and $\Gamma$ such that 
	\begin{equation}\label{2..12}
		(\mu(\underline{u})-\delta\textbf{1}+\Gamma_{n})\cap \partial\Gamma^{h(x)}\subset B_{R}(0), \quad \forall\ x\in M,
	\end{equation}
	where $B_{R}(0)$ denotes the Euclidean ball with radius $R$ and center $0$.
	
	\medskip
	Similar to \cite{Guan14,Szekelyhidi18}, we have the following proposition:
	\begin{proposition}
		Suppose that $\sigma\in (\sup_{\partial \Gamma}f,\sup_{\Gamma}f)$ and $\mu\in\mathbb{R}^{n}$  satisfying 
		\begin{equation}\label{the set}
			(\mu-\delta\mathrm{\textbf{1}}+\Gamma_{n})\cap \partial\Gamma^{\sigma}\subset B_{R}(0)
		\end{equation}
		for some $\delta, R>0$. Then there exists a constant $\theta>0$ depending on $\delta$ and the set in \eqref{the set} such that for each $\mu'\in \partial \Gamma^{\sigma}$ and $|\mu'|>R$, we have either 
		$$\sum_{i}f_{i}(\mu')(\mu_{i}-\mu'_{i})>\theta\sum_{i}f_{i}(\mu'),$$
		or $f_{k}(\mu')>\theta\sum_{i}f_{i}(\mu')$ for each $k=1,2,\cdots,n$.
	\end{proposition}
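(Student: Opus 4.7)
I would argue by contradiction, adapting the dichotomy of Guan and Sz\'ekelyhidi. Suppose the conclusion fails; then for every $k\in\mathbb{N}$ there is a point $\mu^{(k)}\in\partial\Gamma^\sigma$ with $|\mu^{(k)}|>R$ such that
\begin{equation*}
\sum_i f_i(\mu^{(k)})(\mu_i-\mu^{(k)}_i)\le\tfrac{1}{k}\sum_j f_j(\mu^{(k)})\quad\text{and}\quad\min_i f_i(\mu^{(k)})\le\tfrac{1}{k}\sum_j f_j(\mu^{(k)}).
\end{equation*}
Normalize $\alpha^{(k)}_i:=f_i(\mu^{(k)})/\sum_j f_j(\mu^{(k)})$ so that $\alpha^{(k)}$ lies in the standard simplex; passing to a subsequence, $\alpha^{(k)}\to\alpha$, and by pigeonholing the minimizing index we obtain a fixed $i_0$ with $\alpha_{i_0}=0$. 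If $|\mu^{(k)}|$ stayed bounded, compactness would produce $\mu^{(k)}\to\mu^\infty\in\partial\Gamma^\sigma\setminus B_R(0)$; since $\sigma>\sup_{\partial\Gamma}f$ the limit lies in the interior of $\Gamma$, so condition (i) gives $f_{i_0}(\mu^\infty)>0$ while the normalization forces the opposite, a contradiction. Hence $|\mu^{(k)}|\to\infty$, and along a further subsequence $v_k:=\mu^{(k)}/|\mu^{(k)}|\to v$ with $|v|=1$ and $\sum v_i\ge 0$, since $\mu^{(k)}\in\Gamma\subset\Gamma_1$.

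The hypothesis \eqref{the set} together with $|\mu^{(k)}|>R$ forces $\mu^{(k)}\notin\mu-\delta\mathbf{1}+\Gamma_n$, and pigeonholing gives a fixed index $j$ with $\mu^{(k)}_j\le\mu_j-\delta$ for all $k$; thus $v_j\le 0$, and together with $\sum v_i\ge 0$ at least one $v_l>0$. The two main tools are now available: the tangent-hyperplane inequality
\begin{equation*}
\sum_i f_i(\mu^{(k)})(\nu_i-\mu^{(k)}_i)\ge f(\nu)-\sigma\qquad\text{for every }\nu\in\overline{\Gamma^\sigma},
\end{equation*}
coming from concavity of $f$, and the uniform lower bound $\sum_j f_j(\mu^{(k)})\ge(\sigma'-\sigma)/N>0$ obtained from (iii) by plugging in $\nu=\mu^{(k)}+N\mathbf{1}\in\Gamma^{\sigma'}$.

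The contradiction is then produced by choosing a test vector $\nu^\ast\in\partial\Gamma^\sigma\cap(\mu-\delta\mathbf{1}+\Gamma_n)$, which is contained in $B_R(0)$ by \eqref{the set}. The concavity inequality applied to $\nu^\ast$ gives $L_k:=\sum_i\alpha^{(k)}_i\mu^{(k)}_i\le\sum_i\alpha^{(k)}_i\nu^\ast_i$, a uniform upper bound, while the failing first alternative gives $L_k\ge\sum_i\alpha^{(k)}_i\mu_i-\tfrac{1}{k}$. Decomposing $L_k$ over $A:=\{i:\alpha_i>0\}$ and its complement, boundedness forces $\mu^{(k)}_i$ to remain bounded for every $i\in A$, so $\lim L_k=\sum_{i\in A}\alpha_i\mu^\infty_i$ and the hypothesis simplifies in the limit to $\sum_i\alpha_i\mu_i\le\lim L_k$. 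Using $\alpha_{i_0}=0$ one may enlarge $\nu^\ast_{i_0}$ freely, while the constraint $\nu^\ast\in\partial\Gamma^\sigma$ forces another coordinate to shrink, producing $\inf_{\nu^\ast}\sum\alpha_i\nu^\ast_i<\sum\alpha_i\mu_i$ and contradicting $L_k\ge\sum\alpha_i\mu_i+o(1)$.

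The principal obstacle is this last optimization step: both $\alpha^{(k)}_i\to 0$ and $\mu^{(k)}_i\to\pm\infty$ for indices $i\notin A$, so the competing rates must be tracked carefully, and the cone condition $\Gamma+\Gamma_n\subset\Gamma$ together with (iii) must be used to guarantee that $\nu^\ast$ really can be moved along the $e_{i_0}$ direction inside $\partial\Gamma^\sigma$; the vanishing of $\alpha_{i_0}$ is then what makes this shift neutral for the linear functional $\sum\alpha_i(\cdot)_i$. This is precisely the place where the structural assumptions of the paper (in particular (iii) and $\Gamma+\Gamma_n\subset\Gamma$) are tailored to drive the argument.
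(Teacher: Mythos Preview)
Your contradiction strategy is plausible in outline, but the final paragraph is not a proof --- and you say so yourself when you call it the ``principal obstacle.'' The two specific gaps are: (a) the claim that boundedness of $L_k=\sum_i\alpha_i^{(k)}\mu_i^{(k)}$ forces $\mu_i^{(k)}$ to stay bounded for every $i\in A$ is unsupported. Nothing prevents cancellations: some $\mu_i^{(k)}$ with $\alpha_i>0$ could tend to $+\infty$ while another with $\alpha_l>0$ tends to $-\infty$ at a comparable rate. (b) The optimization over $\nu^*$ is not carried out. You chose $\nu^*\in\partial\Gamma^\sigma\cap(\mu-\delta\mathbf{1}+\Gamma_n)\subset B_R(0)$; you cannot then ``enlarge $\nu^*_{i_0}$ freely'' and stay in that bounded set, and if you leave the set you lose the upper bound on $\sum\alpha_i\nu^*_i$. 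The assertion that moving along $e_{i_0}$ inside $\partial\Gamma^\sigma$ forces another coordinate to shrink enough to contradict $L_k\ge\sum\alpha_i\mu_i+o(1)$ is exactly what would need a real argument, and the structural conditions (iii) and $\Gamma+\Gamma_n\subset\Gamma$ are invoked only by name.

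The paper sidesteps all of this with a short direct argument: it does not argue by contradiction, does not pass to limits, and does not optimize. Instead, it works with the compact set $A_\delta=\{v\in\Gamma: f(v)\le\sigma,\ v-(\mu-\delta\mathbf{1})\in\bar\Gamma_n\}$ and, for each $v\in A_\delta$, the cone $\mathcal{C}_v$ of directions from $v$ into $(\mu-2\delta\mathbf{1}+\Gamma_n)\cap\partial\Gamma^\sigma$. Since $\mathcal{C}_v$ is strictly larger than $\Gamma_n$, every vector in the dual cone $\mathcal{C}_v^*$ has all coordinates bounded below by some $\epsilon>0$, uniform in $v$ by compactness. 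Given $\mu'\in\partial\Gamma^\sigma$ with $|\mu'|>R$, one simply asks whether the tangent plane $T_{\mu'}$ meets $A_\delta$: if it does, the unit normal $Df(\mu')/|Df(\mu')|$ lies in some $\mathcal{C}_v^*$, giving the second alternative; if not, then $\mathrm{dist}(\mu,T_{\mu'})>\delta$, which is the first alternative. This dual-cone/tangent-plane dichotomy is the key idea, and it makes the problematic limit analysis in your sketch unnecessary.
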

	\begin{proof}
		The proof can be found in \cite[Proposition 5]{Szekelyhidi18}, we include it here for convenience to reader.	Set 
		$$A_{\delta}=\{v\in\Gamma\ :\ f(v)\leq \sigma \text{\ and\ } v-(\mu-\delta \textbf{1})\in\bar{\Gamma}_{n}\}.$$ It follows from  \eqref{the set} that $A_{\delta}$ is compact. For each $v\in A_{\delta}$, we define
		\[\mathcal{C}_{v}=\{w\in\mathbb{R}^{n}\ :\ v+tw\in (\mu-2\delta \textbf{1}+\Gamma_{n})\cap\partial \Gamma^{\sigma} \text{ \ for some\ } t>0 \}.\]
		Note that $f_{i}>0$ for all $i$. We conclude that
		\[\overline{(\mu-\delta \textbf{1}+\Gamma_{n})\cap\partial \Gamma^{\sigma}}\subsetneq (\mu-2\delta \textbf{1}+\Gamma_{n})\cap\partial \Gamma^{\sigma},\]
		which implies that $\mathcal{C}_{v}$ is strictly larger than $\Gamma_{n}$. Now we define the dual cone of $\mathcal{C}_{v}$ by
		\[\mathcal{C}_{v}^{*}=\{x\in\mathbb{R}^{n}\ :\ \left\langle x, y\right\rangle>0 \text{\ for all\ } y\in \mathcal{C}_{v}\}.\]
		We remark that $C_{v}\supsetneq \Gamma_{n}$  implies there exists a constant $\epsilon>0$ such that if $x=(x_{1},\cdots, x_{n})\in\mathcal{C}_{v}^{*}$, 
		\begin{equation}\label{dual cone property}
			x_{k}>\epsilon \text{\ for all \ } k .
		\end{equation}
		As $A_{\delta}$ compact, we can find a uniform constant $\epsilon$ such that \eqref{dual cone property} holds for all $v\in A_{\delta}$. Let $\mu'\in\partial \Gamma^{\sigma} $, $|\mu'|>R$ and $T_{\mu'}$ be the tangent plane to $\partial \Gamma^{\sigma}$ at $\mu'$. Now we split the proof into two cases:
		\medskip
		\begin{itemize}
			\item[Case 1.] Assume $T_{\mu'}\cap A_{\delta}\neq \emptyset$ and let $v\in T_{\mu'}\cap A_{\delta}$. Then the cone $v+\mathcal{C}_{v}$ lies above $T_{\mu'}$, i.e. $\left\langle x, n_{\mu'}\right\rangle>0$ for all $x\in \mathcal{C}_{v}$, where $n_{\mu'}$ is the inward pointing unit normal vector of $\partial \Gamma^{\sigma}$at $\mu'$. By the definition of $\mathcal{C}_{v}^{*}$,  we obtain $n_{\mu'}=Df(\mu')/|Df(\mu')|\in \mathcal{C}_{v}^{*}.$ It then follows \eqref{dual cone property} that for each $k=1,2,\cdots,n$
			\[f_{k}(\mu')>\epsilon|Df(\mu')|.\]
			\vspace{0.1cm}
			\item[Case 2.] We now assume $T_{\mu'}\cap A_{\delta}= \emptyset$, then $\text{dist}(\mu, T_{\mu'})>\delta$. Thus, $(\mu-\mu')\cdot n_{\mu'}>\delta,$ i.e.
			\[\sum_{i}f_{i}(\mu')(\mu-\mu')>\delta|Df(\mu')|.\]
			This completes the proof of proposition.
		\end{itemize}
		
	\end{proof}
	Using previous proposition, we have the following result originated from \cite{Guan14,Szekelyhidi18,CHZ21}. 
	It will play an important role in the proof of Theorem \ref{Thm4.1}.

	\begin{proposition}\label{prop subsolution}
		Let $\sigma\in[\inf_{M}h,\sup_{M}h]$ and $A$ be a Hermitian matrix with eigenvalues $\mu(A)\in \partial \Gamma^{\sigma}$.
		\begin{enumerate}\setlength{\itemsep}{1mm}
			\item There exists a constant $\tau$ depending 
			on $f$, $\Gamma$ and $\sigma$ such that
			\begin{equation}\label{lower bound of F}
				\mathcal{G}(A) = \sum_{i}G^{i\ol{i}}(A) > \tau.
			\end{equation}
			\item For $\delta, R>0$, there exists $\theta>0$ depending only on $f$, $\Gamma$, $h$, $\delta$, $R$ such that the following holds. If $B$ is a Hermitian matrix satisfying
			\begin{equation*}
				(\mu(B)-2\delta{\bf 1}+\Gamma_{n})\cap\partial\Gamma^{\sigma}\subset B_{R}(0),
			\end{equation*}
			then we  have either
			\begin{equation}\label{property 1 of sub}
				\sum_{p,q}G^{p\ol{q}}(A)[B_{p\ol{q}}-A_{p\ol{q}}]>\theta\sum_{p}G^{p\ol{p}}(A)
			\end{equation}
			or
			\begin{equation}\label{property 2 of sub}
				G^{i\ol{i}}(A)>\theta\sum_{p}G^{p\ol{p}}(A), \quad \forall \ i=1,2,\cdots, n.
			\end{equation}
		\end{enumerate}
	\end{proposition}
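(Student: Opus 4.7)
My plan is to prove (1) by directly combining the concavity of $f$ with structural condition (iii), and to deduce (2) from the vector-valued proposition stated just above, the principal new ingredient being a Schur--Horn / Abel summation step that trades eigenvalues of $B$ for its diagonal entries in the basis diagonalizing $A$. For (1), I first use condition (ii) to fix some $\sigma_0' \in (\sup_M h,\sup_\Gamma f)$, and then apply condition (iii) to the pair $(\inf_M h,\sigma_0')$ to obtain $N>0$ with $\Gamma^{\inf_M h}+N\mathbf{1}\subset\Gamma^{\sigma_0'}$. For any $\sigma\in[\inf_M h,\sup_M h]$ and any Hermitian $A$ with $\mu(A)\in\partial\Gamma^\sigma$, the inclusion $\Gamma^\sigma\subset\Gamma^{\inf_M h}$ gives $\mu(A)+N\mathbf{1}\in\Gamma^{\sigma_0'}$, hence $f(\mu(A)+N\mathbf{1})>\sigma_0'$. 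Concavity of $f$ then yields
\begin{equation*}
\sigma_0' \;<\; f(\mu(A)+N\mathbf{1}) \;\leq\; f(\mu(A)) + N\sum_i f_i(\mu(A)) \;=\; \sigma + N\,\mathcal{F}(A),
\end{equation*}
so $\mathcal{F}(A) > (\sigma_0'-\sup_M h)/N =: \tau>0$, which proves (1).

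For (2), I would diagonalize $A$ in a unitary basis so that $A_{p\bar q}=\delta_{pq}\mu_p(A)$ with $\mu_1(A)\geq\cdots\geq\mu_n(A)$; then $F^{p\bar q}(A)=\delta_{pq}f_p(\mu(A))$, and the symmetry and concavity of $f$ force $f_1(\mu(A))\leq\cdots\leq f_n(\mu(A))$. I split on the size of $|\mu(A)|$. If $|\mu(A)|\leq R$, then $\mu(A)\in\partial\Gamma^\sigma$ stays in a compact subset of $\Gamma$ (it is bounded away from $\partial\Gamma$ because $\sigma\geq\inf_M h>\sup_{\partial\Gamma}f$ by condition (ii)), so every $f_i(\mu(A))$ is uniformly positive and $\mathcal{F}(A)$ is uniformly bounded; this gives \eqref{property 2 of sub} for a suitable $\theta$. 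If $|\mu(A)|>R$, I apply the vector-valued proposition above with $\mu':=\mu(A)$ and $\mu:=\mu(B)$, both sorted decreasingly. Its first alternative reads $f_i(\mu(A))>\epsilon|Df(\mu(A))|$ for every $i$, which after comparing $|Df|$ and $\sum_j f_j$ up to a factor of $\sqrt{n}$ yields \eqref{property 2 of sub}.

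The main technical step, which I expect to be the principal obstacle, is converting the second alternative of the vector proposition, namely $\sum_i f_i(\mu(A))(\mu_i(B)-\mu_i(A))>\theta\,\mathcal{F}(A)$, into the matrix-level estimate \eqref{property 1 of sub}. I need to trade the eigenvalues $\mu_i(B)$ for the diagonal entries $B_{p\bar p}$ in the basis diagonalizing $A$. For this I would invoke the Schur--Horn theorem combined with Abel summation: since the decreasing rearrangement $(B^\downarrow_{p\bar p})$ is majorized by $(\mu_p(B))$ and the weights $f_p(\mu(A))$ are nondecreasing in $p$, the rearrangement inequality gives $\sum_p f_p(\mu(A))\,B_{p\bar p}\geq\sum_p f_p(\mu(A))\,B^\downarrow_{p\bar p}$, and Abel summation applied to the majorization yields
\begin{equation*}
\sum_p f_p(\mu(A))\,B^\downarrow_{p\bar p} \;\geq\; \sum_p f_p(\mu(A))\,\mu_p(B).
\end{equation*}
Subtracting $\sum_p f_p(\mu(A))\,\mu_p(A)=\sum_p F^{p\bar p}(A)\,A_{p\bar p}$ from both sides then delivers $\sum_{p,q}F^{p\bar q}(A)(B_{p\bar q}-A_{p\bar q})\geq\theta\,\mathcal{F}(A)$, which is \eqref{property 1 of sub}. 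Once this Schur--Horn step is in hand, the remainder is careful bookkeeping of the constants $\delta,R$ produced by the two structural hypotheses.
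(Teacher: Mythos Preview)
Your proof is correct. Part (1) is exactly the paper's argument (concavity plus condition (iii)), with the extra care of making $\tau$ uniform over $\sigma\in[\inf_M h,\sup_M h]$ by fixing a single $\sigma_0'>\sup_M h$.

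For part (2) your argument is also correct but takes a slightly different route from the references the paper cites. The paper defers entirely to \cite{Szekelyhidi18} and \cite{CHZ21}; in \cite{Szekelyhidi18} the geometric tangent-plane/cone argument is run directly in the space of Hermitian matrices, so no eigenvalue-to-matrix lifting step is needed. You instead invoke the vector-level proposition and then lift via Schur--Horn: the diagonal of $B$ in the basis diagonalizing $A$ lies in the permutation polytope of $\mu(B)$, and since the weights $f_p(\mu(A))$ are monotone in $p$, the rearrangement inequality gives $\sum_p f_p(\mu(A))B_{p\bar p}\ge\sum_p f_p(\mu(A))\mu_p(B)$, which is precisely what is needed. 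This is a clean and self-contained way to do the conversion, and it has the advantage of reusing the vector proposition already proved in the paper rather than redoing its geometry at the matrix level. Your compactness argument for $|\mu(A)|\le R$ is also correct: $\{f\ge\inf_M h\}\cap\overline{B_R(0)}$ is a compact subset of $\Gamma$, on which all $f_i$ are bounded above and below by positive constants, so \eqref{property 2 of sub} follows.
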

	
	\begin{proof}
		For (1), choosing $\sigma'$ with $\sup_{\Gamma} f>\sigma'>\sigma.$	
		By assumption (iii) and concavity, there exists a large constant $N$ such that 
		\[\sigma'<f(\mu(A)+N \textbf{1})\leq f(\mu(A)) +N\sum_{i} f_{i}(\mu(A)).\]
		It follows $\mathcal{G}(A)\geq \frac{1}{N}(\sigma'-\sigma)$ which implies (1).
		
		\medskip
		For (2), we divide into two possibilities:
		\begin{itemize}
		\itemsep0.2em
		    \item $|\mu(A)|\geq R$. We note that the proof of \cite[Proposition 6]{Szekelyhidi18} only needs assumption (i) and (ii). Then the conclusion follows.
		    \vspace{0.1cm}
		    \item $|\mu(A)|<R$. Using the argument of \cite[Proposition 2.1]{CHZ21}, we complete the proof.
		\end{itemize}
	\end{proof}


	\subsection{The dHYM equation}
	Let $\Gamma=\{\mu\in\mathbb{R}^{n}\ : \ 0<\phi(\mu)<\pi\}$ 
	and let $\phi$ be the function defined in \eqref{DHYM}. We consider the dHYM equation
	\begin{equation}\label{DHYM2}
		f(\mu(u))=\cot\phi(\mu(u))=\cot h, \quad \mu(u)\in \Gamma.
	\end{equation}
	For any $\sigma\in \mathbb{R}$, we have $\Gamma^{\sigma}=\{\mu\in\mathbb{R}^{n}\ : \ 0<\phi(\mu)<\mathrm{arccot}\ \sigma\}.$
	
		Now we prove the dHYM equation satisfying the structural condition (iii).

		\begin{proposition}\label{prop subsolution dhym}
			Let $f(\mu)=\cot \phi(\mu)$. For any $\sigma, \sigma'\in \mathbb{R}$ with $\sigma< \sigma'$,  there exists a positive constant $N$, depending only on  $\sigma$ and $\sigma'$, such that 
			\begin{equation}\label{condition iii}
				\Gamma^{\sigma}+N \textbf{1}\subset \Gamma^{\sigma'}.
			\end{equation}
	\end{proposition}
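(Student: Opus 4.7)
The plan is to convert the level-set condition on $\phi$ into a pointwise lower bound on each coordinate of $\mu$, and then push the inclusion through by two straightforward applications of monotonicity. First I would observe that for every $\mu \in \Gamma^{\sigma}$, each summand in $\phi(\mu) = \sum_{j} \mathrm{arccot}(\mu_{j})$ is strictly positive, so for each $i$
\[
\mathrm{arccot}(\mu_{i}) \le \phi(\mu) < \mathrm{arccot}(\sigma).
\]
Since $\mathrm{arccot} : \mathbb{R} \to (0,\pi)$ is strictly decreasing, this forces $\mu_{i} > \sigma$. Hence $\Gamma^{\sigma}$ is contained in the open half-space on which every coordinate exceeds $\sigma$.

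With this componentwise lower bound in hand, I would translate by $N \textbf{1}$ with $N>0$ and apply monotonicity of $\mathrm{arccot}$ once more: $\mu_{i}+N > \sigma+N$ gives $\mathrm{arccot}(\mu_{i}+N) < \mathrm{arccot}(\sigma+N)$, and summing over $i$ yields
\[
0 < \phi(\mu + N\textbf{1}) < n\, \mathrm{arccot}(\sigma + N).
\]

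Finally I would choose $N=N(\sigma,\sigma')$ large enough that $n\,\mathrm{arccot}(\sigma+N) \le \mathrm{arccot}(\sigma')$. Since $\mathrm{arccot}(\sigma+N) \to 0$ as $N \to \infty$, this is clearly achievable; an explicit sufficient condition is $N > \cot(\mathrm{arccot}(\sigma')/n) - \sigma$, which depends only on $n$, $\sigma$, $\sigma'$. For such $N$, the inequality above becomes $0 < \phi(\mu + N\textbf{1}) < \mathrm{arccot}(\sigma')$, so $\mu + N\textbf{1} \in \Gamma^{\sigma'}$, which gives the desired inclusion.

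I do not anticipate a serious obstacle: the proposition is essentially the statement that the sub-level sets $\{\phi < \mathrm{arccot}(\sigma)\}$ are bounded below in each coordinate by $\sigma$. The only step calling for any care is the initial reduction, which exploits the positivity of every summand in $\phi$ to upgrade the bound $\phi(\mu) < \mathrm{arccot}(\sigma)$ to the pointwise bound $\mu_{i} > \sigma$; everything after that is a routine monotone estimate.
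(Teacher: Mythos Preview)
Your proof is correct. Both you and the paper proceed by first establishing a uniform lower bound on the coordinates of any $\mu\in\Gamma^{\sigma}$, then shifting by a large multiple of $\textbf{1}$ and using the monotonicity of $\mathrm{arccot}$ to force $\phi(\mu+N\textbf{1})<\mathrm{arccot}(\sigma')$. The difference is in how that lower bound is obtained: the paper invokes Lemma~\ref{CPW}(1), which guarantees $\mu_{n}\geq -C_{\delta}$ for $\delta=\pi-\mathrm{arccot}(\sigma)$, and hence $\mu+N'\textbf{1}\in\Gamma_{n}$ for some uniform $N'$; then a further shift by $N''$ with $f(N''\textbf{1})>\sigma'$ finishes the argument. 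Your route is more direct and entirely self-contained: from $\mathrm{arccot}(\mu_{i})\leq\phi(\mu)<\mathrm{arccot}(\sigma)$ you read off the sharper explicit bound $\mu_{i}>\sigma$ without appealing to Lemma~\ref{CPW}, and then a single shift suffices. Your approach buys simplicity and an explicit choice of $N$; the paper's approach has the virtue of reusing a lemma it needs elsewhere anyway.
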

	\begin{proof}
		We fix an arbitrary $\mu\in \Gamma^{\sigma}$. By \cite[Lemma 2.1]{CPW}, there exists a constant $N'$ such that $\mu+N'\textbf{1}\in \Gamma_{n}$. It is straightforward that there exists a constant $N''$ such that $f(N''\textbf{1})>\sigma'$.
		Then we have $f(\mu+(N'+N'') \textbf{1})>f(N''\textbf{1})>\sigma'$. This implies 
		\eqref{condition iii} by letting $N=N'+N''$.
		%
	\end{proof}

	\section{A priori estimates}
	\subsection{Zero order estimate}\label{zero order estimate}

	%
	%

	\begin{proposition}\label{Prop3.2}
Let u (resp. $\underline{u}$) be a smooth solution (resp. $\mathcal{C}$-subsolution) of \eqref{nonlinear equation} with $\sup_{M}(u-\underline{u})=0$.  Then there exists a constant $C$ depending on $\underline{u}$, $h$, $\|\omega\|_{C^{0}}$, $f$, $\Gamma$ and $(M,\chi,J)$ such that
		\begin{equation*}
			\|u\|_{L^{\infty}}\leq C.
		\end{equation*}
	\end{proposition}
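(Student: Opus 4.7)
The $L^\infty$ estimate splits into an upper and a lower bound, which I would handle separately. The upper bound is immediate from the normalization $\sup_M(u-\underline{u})=0$: this forces $u \leq \underline{u} \leq \sup_M \underline{u}$ pointwise, so $u$ is bounded above by a constant depending only on $\underline{u}$. All the work is in the lower bound, and I would follow the Blocki-type Alexandroff-Bakelman-Pucci (ABP) argument combined with the $\mathcal{C}$-subsolution technique of Sz\'ekelyhidi, adapted to the almost Hermitian setting as in \cite{CTW19,TW19}.

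Set $L := -\inf_M(u-\underline{u}) \geq 0$, and let $x_0 \in M$ achieve this infimum. I would work in a coordinate chart centered at $x_0$ in which $\chi$ is uniformly comparable to the Euclidean metric on the unit ball $B_1 \subset \mathbb{R}^{2n}$, and introduce the auxiliary function
$$\phi(x) := u(x) - \underline{u}(x) + \varepsilon |x|^{2}$$
for a small but fixed $\varepsilon > 0$. Then $\phi(x_0)=-L$ while $\phi \geq -L+\varepsilon$ on $\partial B_1$, so $\phi$ attains its minimum at some interior point $x_1 \in B_1$. Applying the real ABP estimate to $\phi$ yields a universal constant $c_0>0$ with
$$c_0\, L^{2n} \leq \int_{P} \det(D^{2}\phi)\, dx,$$
where $P \subset B_1$ denotes the lower contact set of $\phi$.

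On $P$ we have $D^{2}\phi \geq 0$ and $|D\phi| \leq C\varepsilon$. The positivity of the real Hessian translates into $\ddbar \phi \geq 0$ up to torsion terms coming from the almost complex structure $J$, while the smallness of $D\phi$ forces $\partial u$ to be $O(\varepsilon)$-close to $\partial\underline{u}$, so the linear gradient piece satisfies $|Z(\partial u) - Z(\partial\underline{u})| \leq C\varepsilon$. Combining these, at every $x \in P$ one gets
$$\omega_u(x) \geq \omega_{\underline{u}}(x) - C\varepsilon\, \chi(x).$$
Since $f(\mu(u)) = h$ and $\mu(u) \geq \mu(\underline{u}) - C\varepsilon\,\mathbf{1}$ at such points, the boundedness of the set in \eqref{2..12} guaranteed by the $\mathcal{C}$-subsolution hypothesis forces the eigenvalues $\mu_i(u)$ on $P$ to lie in a fixed compact set once $\varepsilon$ is taken small enough. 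This gives a uniform bound $0 \leq D^{2}\phi \leq C$ on $P$, whence $\det(D^{2}\phi) \leq C$ pointwise. Substituting into the ABP inequality gives $L \leq C$, completing the proof.

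The main obstacle will be the careful translation of real positivity $D^{2}\phi \geq 0$ into the complex inequality $\omega_u \geq \omega_{\underline{u}} - C\varepsilon\,\chi$ on the contact set: on an almost Hermitian manifold one must control both the Lie bracket terms $[e_i,\overline{e}_j]^{(0,1)}$ arising from $\ddbar$ and the gradient term $Z(\partial u)$, and verify that their contribution on $P$ is genuinely $O(\varepsilon)$ rather than order one. Once this comparison is in place, the $\mathcal{C}$-subsolution property together with assumptions (i)--(ii) closes the argument as in \cite{Szekelyhidi18,CTW19}.
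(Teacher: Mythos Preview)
Your outline assembles the right ingredients --- the ABP estimate on a contact set, and the $\mathcal{C}$-subsolution bound forcing $\det D^{2}\phi \leq C$ there --- but the ABP inequality you invoke is stated incorrectly, and as a result the argument does not close.

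With $\phi = u - \underline{u} + \varepsilon|x|^{2}$ you have $\phi(x_{0}) = -L$ while $\phi \geq -L + \varepsilon$ on $\partial B_{1}$; the interior--boundary gap is only $\varepsilon$, not $L$. Hence the image of the lower contact set $P$ under $D\phi$ contains only a ball of radius comparable to $\varepsilon$, and ABP gives
\[
c_{0}\,\varepsilon^{2n} \leq \int_{P}\det(D^{2}\phi)\,dx,
\]
not $c_{0}L^{2n}$. Combining this with your (correct) pointwise bound $\det(D^{2}\phi)\leq C$ on $P$ yields only $c_{0}\varepsilon^{2n}\leq C|P|$, i.e.\ a \emph{lower} bound on $|P|$, which says nothing about $L$.

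What is missing is an a priori $L^{1}$ bound $\int_{M}(-u)\,\chi^{n}\leq C$, which the paper proves first (Step~1) via a Green's function argument: since $\mu(u)\in\Gamma\subset\Gamma_{1}$ one has $\Delta_{\chi}u + \chi^{i\bar j}Z_{i\bar j}(\partial u)\geq -C$, and after passing to a conformal (Gauduchon-type) metric for which this first-order operator has vanishing integral, the Green representation formula delivers the $L^{1}$ bound. One then observes that on $P$ one has $-u - \varepsilon|x|^{2} \geq |I+\varepsilon|$ (taking $\underline{u}=0$, with $I=\inf_{M}u$), so the measure bound on $|P|$ combines with the $L^{1}$ bound to give
\[
c_{0}\varepsilon^{2n}|I+\varepsilon| \;\leq\; C|P|\cdot|I+\varepsilon| \;\leq\; \int_{P}(-u)\,\chi^{n} + C\varepsilon \;\leq\; C,
\]
which bounds $|I|$. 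Without this $L^{1}$ step the ABP argument alone cannot produce an $L^{\infty}$ estimate.
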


	\begin{proof}
		Without loss of generality, we may assume that $\underline{u}=0$.
		Thanks to \cite[(44)]{Szekelyhidi18}, we have $\mathrm{tr}_{\chi}\omega_{u}>0$ and hence
		\begin{equation}\label{Delta}
			\Delta u=\Delta_{\chi} u+\chi^{i\bar{j}}Z_{i\bar j}(\partial u)=\text{tr}_{\chi}\omega_{u}-\text{tr}_{\chi}\omega\geq -C,
		\end{equation}
		where $\Delta_{\chi}$ denotes the canonical Laplacian operator of $\chi$.  Following a similar argument of \cite[Proposition 2.3]{CTW19}, then there exists a uniform constant $C$ such that
		\begin{equation}\label{L1}
			\int_{M}(-u)\chi^{n}\leq C\,.
		\end{equation}.

		Now it suffices to establish the lower bound of the infimum $I=\inf_{M}u$.  We can adopt the arguments in \cite{CHZ21}. We remark that the only difference here is the presence of the term $Z(\partial u)$ in the definition of $H(u)$. However, this term is linear in $\partial u$, which can be controlled (by $\varepsilon$) on the contact set $P$ in \cite{CHZ21}.
	\end{proof}

	\subsection{Second order estimate}\label{second order estimate}
	In this subsection, we give the proof of Theorem \ref{main estimate}. Our first goal is the following theorem:
	\begin{theorem}\label{Thm4.1}
	Under the same  assumptions as in Proposition \ref{Prop3.2}.  Then there exists a constant $C$ depending on $\underline{u}$, $h$, $\|\omega\|_{C^{2}}$, $f$, $\Gamma$ and $(M,\chi,J)$ such that 
		\begin{equation}\label{HMW}
			\sup_{M}|\nabla^{2}u|_{\chi} \leq C(\sup_{M}|\partial u|_{\chi}^{2}+1),
		\end{equation}
		where $\nabla$ denotes the Levi-Civita connection with respect to $\chi$.
	\end{theorem}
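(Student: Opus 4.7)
The plan is to apply the maximum principle to the auxiliary quantity
\[
Q = \log \lambda_{1} + \xi(|\rho|_{\chi}^{2}) + \eta(|\partial u|_{\chi}^{2}) + e^{-Au}
\]
announced in the introduction, where $\lambda_{1}$ is the largest eigenvalue of $\nabla^{2}u$ with respect to $\chi$, $A\gg 1$ is a constant to be chosen last, and $\xi,\eta$ are standard one-variable functions (for instance $\eta(t)=-\log(2K-t)$ with $K=1+\sup_{M}|\partial u|_{\chi}^{2}$, so that $\eta''=(\eta')^{2}$). Since $\lambda_{1}$ need not be smooth when it is a multiple eigenvalue, at a maximum point $x_{0}$ of $Q$ I would first perturb $\nabla^{2}u$ by a small constant symmetric matrix that is diagonal in an eigenframe at $x_{0}$, as in \cite{Szekelyhidi18,CTW19,CHZ21}, so that $\lambda_{1}$ becomes simple and smooth in a neighbourhood. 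Then $Q_{i}(x_{0})=0$ and $L(Q)(x_{0})\leq 0$, where $L$ is the linearized operator \eqref{L}.

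The main term is $L(\log\lambda_{1})$. Differentiating $F(\omega_{u})=h$ twice in the direction of the unit real eigenvector $X_{1}$ for $\lambda_{1}$, concavity of $f$ gives $F^{i\bar j}(\omega_{u})_{i\bar j, X_{1}X_{1}} \geq X_{1}X_{1}(h)$. Translating the left-hand side into $L(\lambda_{1})$ produces error terms of three kinds: commutators between $X_{1}$ and the $(1,0)$-frame $\{e_{i}\}$ carrying Nijenhuis/torsion data from $J$, as in \cite{CTW19}; first and second derivatives of $Z(\partial u)$, which by linearity of $Z$ in $\partial u$ contribute schematically of the form $F^{i\bar i}(|\nabla^{2}u|+|\partial u|_{\chi}^{2})$; and bounded zeroth-order terms. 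After dividing by $\lambda_{1}$ and absorbing the good third-order term $\sum_{i}F^{i\bar i}|\lambda_{1,i}|^{2}/\lambda_{1}^{2}$ via the critical point condition $Q_{i}(x_{0})=0$, one arrives at a bound of the form
\[
L(\log\lambda_{1}) \ \geq\ -C\mathcal{F}\bigl(1+|\partial u|_{\chi}^{2}\bigr) - \frac{C}{\lambda_{1}}\sum_{i}F^{i\bar i}|u_{iX_{1}}|^{2} + \text{(good terms)}.
\]
The term $L(\eta(|\partial u|_{\chi}^{2}))$ produces a positive bulk of the form $\eta'F^{i\bar i}(|u_{iX_{1}}|^{2}+|u_{\bar i X_{1}}|^{2})$ that precisely matches and absorbs the negative pieces above, while $L(\xi(|\rho|_{\chi}^{2}))$ handles the remaining mixed first-order gradient contributions arising from the almost Hermitian commutators.

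The closing argument applies Proposition \ref{prop subsolution} to the Hermitian matrices $(\omega_{u})_{i\bar j}(x_{0})$ and $(\omega_{\underline u})_{i\bar j}(x_{0})$. A direct calculation gives
\[
L(e^{-Au}) \ \geq\ -A e^{-Au}L(u) \ =\ -A e^{-Au}F^{i\bar j}(\omega_{u}-\omega)_{i\bar j},
\]
which, together with the subsolution comparison, yields in alternative \eqref{property 1 of sub} the lower bound $L(e^{-Au})\geq Ae^{-Au}(\theta - C/A)\mathcal{F}$; taking $A$ large compared to the cumulative error constants forces $L(Q)>0$ unless $\lambda_{1}\leq C(1+\sup_{M}|\partial u|_{\chi}^{2})$. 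In the alternative \eqref{property 2 of sub}, every $F^{i\bar i}\geq\theta\mathcal{F}$, so the equation is uniformly elliptic at $x_{0}$ and the good third-order contribution improves to $L(\log\lambda_{1})\geq c\lambda_{1}\mathcal{F} - C\mathcal{F}(1+|\partial u|_{\chi}^{2})$, again producing the desired bound.

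The main technical obstacle is the precise bookkeeping of error terms that are linear in $\lambda_{1}$ or $|\nabla^{2}u|$: the torsion-type commutators unique to the almost Hermitian setting (which do not arise in the Hermitian case) and the new contributions from twice-differentiating $Z(\partial u)$, absent in \cite{CTW19,Szekelyhidi18,CHZ21}. Both must be absorbed by the good second-order terms from $\eta$ and by the positivity coming from the subsolution dichotomy; it is the tightness of this accounting that produces a quadratic-in-gradient bound rather than one of higher order.
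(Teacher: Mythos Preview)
Your outline misses the structural heart of the argument and contains a step that does not close.

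The crux is the ``bad'' term $(1+\ve)\sum_i F^{i\bar i}|(\lambda_1)_i|^2/\lambda_1^2$. You propose to absorb it wholesale via the critical point identity $Q_i(x_0)=0$. But using $(\lambda_1)_i/\lambda_1=-\xi'(|\rho|^2)_i-\eta'(|\partial u|^2)_i+Ae^{-Au}u_i$ and Cauchy--Schwarz, the first two pieces do get absorbed by $\xi''=4(\xi')^2$, $\eta''=4(\eta')^2$, but the last produces an uncontrollable $A^2e^{-2Au}K\,\mathcal F$ (note $e^{-2Au}\geq e^{-Au}$ since $u\leq 0$), which is \emph{not} dominated by $A^2e^{-Au}F^{i\bar i}|u_i|^2$ and is not of the form $C\mathcal F$. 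The paper resolves this by a delicate case analysis with index sets $J$ (directions where $\eta'\sum_i(|e_ie_ju|^2+|e_i\bar e_ju|^2)$ is large relative to $A^{5n}e^{-5nAu}K$) and $S,I$ (tracking geometric jumps in $F^{i\bar i}$); in Cases 1--2 the $J$-set forces the $\eta'$ bulk to swallow the $A^2e^{-2Au}K\mathcal F$ term, while in Case 3 the bad term is split over $i\in I$ and $i\notin I$ and handled separately. Your sketch contains none of this.

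Relatedly, you omit the two decisive ``good'' contributions in $L(\lambda_1)$ that the paper relies on: the eigenvalue-gap term $2\sum_{\alpha>1}F^{i\bar i}|e_i(u_{V_\alpha V_1})|^2/(\lambda_1-\lambda_\alpha)$, and the concavity term $-F^{i\bar k,j\bar l}V_1(\tilde g_{i\bar k})V_1(\tilde g_{j\bar l})$. These (together with the third-order term $\sum_{\alpha,\beta}F^{i\bar i}|e_i(u_{\alpha\beta})|^2/\lambda_1^2$ coming from $\xi'L(|\rho|^2)$, which is the real purpose of the $\xi(|\rho|^2)$ barrier, not ``mixed first-order gradient contributions'') are precisely what is used in Proposition~\ref{B 3} to bound $(1-2\ve)\sum_{i\in I}F^{i\bar i}|(\lambda_1)_i|^2/\lambda_1^2$ in the hardest case. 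Finally, your closing claim that under \eqref{property 2 of sub} one gets $L(\log\lambda_1)\geq c\lambda_1\mathcal F$ is incorrect: no such term arises. What actually happens in that alternative is that the $\eta'$ bulk, combined with $F^{i\bar i}\geq\theta\mathcal F$, yields $\eta'\mathcal F\sum_{i,j}(|e_ie_ju|^2+|e_i\bar e_ju|^2)\leq C_A K\mathcal F$, hence $\lambda_1\leq C_AK$.
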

	
	Without loss of generality, we assume $\underline{u}=0$ and $\sup_{M}u=-1$. Let $\lambda_{1}\geq\lambda_{2}\geq\cdots\geq\lambda_{2n}$ be the eigenvalues of $\nabla^{2}u$ with respect to $\chi$. For notational convenience, we write
	$|\cdot| = |\cdot|_{\chi}.$
	
Let us define
	\begin{equation}
		K=\sup_{M}|\partial u|^{2}+1, \quad  N=\sup_{M} |\nabla^{2} u|+1, \quad \rho=\nabla^{2}u+N\chi.
	\end{equation}
	On an open set $\Omega=\{\lambda_{1}>0\}\subset M$, we consider
	\begin{equation*}
		Q = \log \lambda_{1}+\varphi(|\rho|^{2})+\psi(|\partial u|^{2})+e^{-Au}
	\end{equation*}
	for a large constant $A$ to be chosen later, where
	\[
	\varphi(s) = -\frac{1}{4}\log(5N^{2}-s), \qquad
	\psi(s) = -\frac{1}{4}\log(2K-s)
	.\]
	By a directly calculation we see that
	\begin{equation}\label{xieta}
		\begin{split}
			\varphi''=4(\varphi')^{2},& \qquad \frac{1}{20N^{2}} \leq \varphi' \leq \frac{1}{16N^{2}},\\[1mm]
			\psi''=4(\psi')^{2}, & \qquad  \frac{1}{8K} \leq \psi' \leq \frac{1}{4K}.
		\end{split}
	\end{equation}
	We may assume $\Omega\neq \emptyset$, otherwise we are done. Since $Q(z)\ri -\infty$ as $z$ approaches to the boundary of  $\Omega$, we further assume $Q$ achieves its maximum at a point $x_{0}\in \Omega$. It is easy to show that (see  \cite{CHZ21} ) 
	\begin{itemize}
	\itemsep0.2em
	    \item[a)] $|\nabla^{2}u| \leq C\lambda_{1}+C\sup_{M}|\de u|+C.$
	    \vspace{0.1cm}
	    \item[b)] $\sup_{M} |\nabla^{2} u|+1 = N \leq C_{A}\lambda_{1}(x_{0}).$
	\end{itemize}
Here $C_{A}$ is a constant depending also on $A$. 
	Therefore, to prove Theorem \ref{Thm4.1}, it suffices to show
	\begin{equation}\label{goal}
		\lambda_{1}(x_{0}) \leq CK.
	\end{equation}

	Near $x_{0}$, there exists a local unitary frame $\{e_{i}\}_{i=1}^{n}$ with respect to $\chi$ such that
	\begin{equation}\label{5..6}
		\chi_{i\overline{j}}=\delta_{ij}, \quad \tilde{g}_{i\overline{j}}=\delta_{ij}\tilde{g}_{i\overline{i}}, \quad \tilde{g}_{1\overline{1}}\geq\tilde{g}_{2\overline{2}}\geq\cdots\geq\tilde{g}_{n\overline{n}} \quad \textrm{at}\ x_0.
	\end{equation}
	Here $\ti{g}_{i\ov{j}}$ is defined by $\omega_{u}=\sqrt{-1}\ti{g}_{i\ov{j}}\theta^{i}\wedge\ov{\theta}^{j}$  and $\{\theta^{i}\}_{i=1}^{n}$ denotes the dual coframe of $\{e_{i}\}_{i=1}^{n}$. It then follows that at $x_{0}$,
	\[
	G^{1\ov{1}} \leq G^{2\ov{2}} \leq \cdots \leq G^{n\ov{n}}.
	\]
	We remark that $\chi$ and $J$ are compatible implies there exists a coordinate system 
	$(U,\{x^{\alpha}\}_{\alpha=1}^{2n})$ 
	in a neighborhood of $x_{0}$ such that at $x_{0}$,
	\begin{itemize}
	\itemsep0.2em	
	    \item[a)] $e_{i}=\frac{1}{\sqrt{2}}\left(\de_{2i-1}-\sqrt{-1}\de_{2i}\right)$ for $i=1,2,\cdots,n$.
	    \vspace{0.1cm}
	    \item[b)] $\de_{\gamma} \chi_{\alpha\beta}=0$ for $\alpha,\beta,\gamma=1,2,\cdots,2n$.
	\end{itemize}
	Here $\chi_{\alpha\beta}=\chi(\partial_{\alpha}, \partial_{\beta})$ and $\de_{\alpha}=\frac{\de}{\de x^{\alpha}}$. Let us define $u_{\alpha\beta}=(\nabla^{2}u)(\de_{\alpha},\de_{\beta})$ and 
	$
	\Phi_{\beta}^{\alpha} =\sum_{\gamma=1}^{2n} \chi^{\alpha\gamma}u_{\gamma\beta},
	$
	where $(\chi^{\alpha\gamma})=(\chi_{\alpha\gamma})^{-1}$ denotes the inverse matrix of $(\chi_{\alpha\gamma})$. Clearly, $\lambda_{\alpha}$ are eigenvalues of $\Phi$. Let $V_{1},V_{2},\cdots,V_{2n}$ be the eigenvectors for $\Phi$ at $x_{0}$, corresponding to eigenvalues $\lambda_{1},\lambda_{2},\cdots,\lambda_{2n}$ respectively. Define $V_{\alpha}^{\beta}$ by $V_{\alpha}=V_{\alpha}^{\beta}\de_{\beta}$ at $x_{0}$, and extend $V_{\alpha}$ to be vector fields near $x_{0}$ by taking the components to be constants. Using a viscosity argument adapted in \cite{CTW19}, we may assume that $\lambda_{1}$ is smooth and $\lambda_{1}>\lambda_{2}$ at $x_{0}$.

	\medskip
	Applying the maximum principle  at $x_{0}$, we see that
	\begin{equation}\label{5..10}
		\frac{(\lambda_{1})_{i}}{\lambda_{1}}=-\varphi'(|\rho|^{2})_{i}-\psi'(|\de u|^{2})_{i}+Ae^{-Au}u_{i}
	\end{equation}
	for each $1\leq i\leq n$, and
	\begin{equation}\label{L Q}
		\begin{split}
			0  \geq L(Q) 
			=& \frac{L(\lambda_{1})}{\lambda_{1}}- G^{i\bar{i}}\frac{|(\lambda_{1})_{i}|^{2}}{\lambda_{1}^{2}}
			+\varphi' L(|\rho|^{2})+\varphi''G^{i\bar{i}}|(|\rho|^{2})_{i}|^{2}
			\\[1mm]
			+&\psi'L(|\partial u|^{2})+\psi'' G^{i\bar{i}}|(|\de u|^{2})_{i}|^{2} -Ae^{-Au}L(u)+A^{2}e^{-Au} G^{i\bar{i}}|u_{i}|^{2}.
		\end{split}
	\end{equation}
	In the sequel, we shall make the following conventions:
	\begin{enumerate}
		\item[(i)] all the calculations are done at $x_{0}$,
		\item[(ii)] we will use the Einstein summation,
		\item[(iii)] we usually use $C$ to denote a constant depending on $\|u\|_{C^{0}}$, $h$, $\omega$, $\Gamma$, $(M,\chi,J)$, and $C_{A}$ to denote a constant further depending  on $A$,
		\item[(iv)] we always assume without loss of generality, that  $\lambda_{1}\geq CK$ for some $C$, or $\lambda_{1}\geq C_{A}K$ for some $C_{A}$,
		\item[(v)] we use subscripts $i$ and $\bar{j}$ to denote the partial derivatives $e_{i}$ and $\bar{e}_{j}$.
	\end{enumerate}
	

	\subsubsection{Lower bound for $L(Q)$}
	\begin{proposition}\label{lower bound of L Q}
		For $\ve\in (0,\frac{1}{3}]$, at $x_{0}$, we have
		\begin{equation}\label{4.7"}
			\begin{split}
				0  \geq L(Q) & \geq
				(2-\ve)\sum_{\alpha>1}\frac{G^{i\ov{i}}|u_{V_{1}V_{\alpha}i}|^2}{\lambda_{1}(\lambda_{1}-\lambda_{\alpha})}-\frac{1}{\lambda_{1}} G^{i\bar{k},j\bar{l}}V_{1}(\tilde{g}_{i\bar{k}})V_{1}(\tilde{g}_{j\bar{l}})\\ &
				+\sum_{\alpha,\beta}\frac{G^{i\ov{i}}|e_{i}(u_{\alpha\beta})|^{2}}
				{C_{A}\lambda_{1}^{2}}
				-(1+\ve)G^{i\bar{i}}\frac{|(\lambda_{1})_{i}|^{2}}{\lambda_{1}^{2}} +\varphi''G^{i\bar{i}}|(|\rho|^{2})_{i}|^{2}\\ &
				+ \frac{3\psi'}{4}\sum_{j} G^{i\bar{i}}(|e_{i}e_{j}u|^{2}+|e_{i}\bar{e}_{j}u|^{2})
				+\psi'' G^{i\bar{i}}|(|\de u|^{2})_{i}|^{2}\\&
				-Ae^{-Au}L(u)+A^{2}e^{-Au} G^{i\bar{i}}|u_{i}|^{2}-\frac{C}{\ve}\mathcal{G}.
			\end{split}
		\end{equation}
	\end{proposition}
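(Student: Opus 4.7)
The starting point is identity (4.6), which expresses $L(Q)$ as the sum of three ``$L$-of-something'' pieces, namely $L(\lambda_{1})/\lambda_{1}$, $\xi'L(|\rho|^{2})$, $\eta'L(|\partial u|^{2})$, together with the positive quadratic terms $\xi''F^{i\bar i}|(|\rho|^{2})_{i}|^{2}$ and $\eta''F^{i\bar i}|(|\partial u|^{2})_{i}|^{2}$, the term $-F^{i\bar i}|(\lambda_{1})_{i}|^{2}/\lambda_{1}^{2}$, and the contribution of the $e^{-Au}$ barrier. The plan is to bound the three $L$-of-something pieces from below separately and then compare with (4.7''). The positive third-order squares in the target inequality are precisely the good outputs of these computations, and the concavity-type term $-\lambda_{1}^{-1}F^{i\bar k,j\bar l}V_{1}(\tilde g_{i\bar k})V_{1}(\tilde g_{j\bar l})$ will appear after differentiating the equation $F(\omega_{u})=h$ twice in the $V_{1}$-direction.

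For the key piece $L(\lambda_{1})/\lambda_{1}$, since $\lambda_{1}>\lambda_{2}$ at $x_{0}$, I would use the standard second-order perturbation formula for the top eigenvalue of a symmetric matrix,
\begin{equation*}
\lambda_{1}(x)=u_{V_{1}V_{1}}(x)+\sum_{\alpha>1}\frac{|u_{V_{1}V_{\alpha}}(x)|^{2}}{\lambda_{1}-\lambda_{\alpha}}+O(|\nabla u_{\alpha\beta}|^{3}),
\end{equation*}
and apply $L$ term by term. The first summand produces $L(u_{V_{1}V_{1}})$; commuting $V_{1}V_{1}$ past $e_{i}\bar e_{j}$ and differentiating the equation twice in the $V_{1}$-direction gives the desired $-\lambda_{1}^{-1}F^{i\bar k,j\bar l}V_{1}(\tilde g_{i\bar k})V_{1}(\tilde g_{j\bar l})$, together with torsion/curvature errors controlled by $C\lambda_{1}\mathcal{F}$ and gradient errors from $Z(\partial u)$ controlled by $C\sqrt{K}\lambda_{1}\mathcal{F}$. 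Differentiating the second summand yields the positive sum $\sum_{\alpha>1}F^{i\bar i}|u_{V_{1}V_{\alpha}i}|^{2}/(\lambda_{1}(\lambda_{1}-\lambda_{\alpha}))$ with initial coefficient $2$; a Cauchy--Schwarz with small parameter $\varepsilon$ on the cross-terms between $u_{V_{1}V_{\alpha}i}$ and the error contributions degrades this coefficient to $2-\varepsilon$ and, in exchange, puts an extra $-\varepsilon F^{i\bar i}|(\lambda_{1})_{i}|^{2}/\lambda_{1}^{2}$ into the bound, which is precisely why the gradient-of-$\lambda_{1}$ coefficient becomes $-(1+\varepsilon)$; all remaining errors are absorbed into $-C\mathcal{F}/\varepsilon$.

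For $\xi'L(|\rho|^{2})$, with $\rho=\nabla^{2}u+N\chi$, expanding $L(\rho_{\alpha\beta}\rho_{\alpha\beta})$ gives a positive $2F^{i\bar i}\sum_{\alpha,\beta}|e_{i}\rho_{\alpha\beta}|^{2}=2F^{i\bar i}\sum_{\alpha,\beta}|e_{i}u_{\alpha\beta}|^{2}$, plus a cross-term $\rho_{\alpha\beta}L(\rho_{\alpha\beta})$ which is of size $N\cdot|\rho|\cdot\mathcal{F}$ and hence controlled by $CN^{2}\mathcal{F}$. Combined with $\xi'\geq 1/(20N^{2})$ and the admissible assumption $N\leq C_{A}\lambda_{1}$ (otherwise (4.5) already gives the required $\lambda_{1}(x_{0})\leq CK$), this yields the desired $\sum_{\alpha,\beta}F^{i\bar i}|e_{i}u_{\alpha\beta}|^{2}/(C_{A}\lambda_{1}^{2})$ after absorbing the error into $-C\mathcal{F}$. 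For $\eta'L(|\partial u|^{2})$ I would expand $|\partial u|^{2}=\sum_{j}u_{j}u_{\bar j}$ and apply $L=F^{i\bar i}(e_{i}\bar e_{j}-[e_{i},\bar e_{j}]^{(0,1)}+\text{gradient of }Z)$; the resulting positive piece is $\eta'F^{i\bar i}\sum_{j}(|e_{i}e_{j}u|^{2}+|e_{i}\bar e_{j}u|^{2})$ with coefficient $1$, and a Cauchy--Schwarz on the remaining cross-terms with $e_{j}L(u)$ (which is controlled using the differentiated equation $L(u)=\sum_{i}f_{i}\mu_{i}-F^{i\bar i}g_{i\bar i}$) drops this coefficient to $3/4$, matching the target, with the leftover errors again absorbed into $C\mathcal{F}/\varepsilon$.

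The main technical obstacle is the bookkeeping of the almost-Hermitian error terms: the non-integrability of $J$ forces us to track the $[e_{i},\bar e_{j}]^{(0,1)}$-brackets appearing in $L$, the non-commutativity of second covariant derivatives in real and complex frames, and the torsion of $\chi$; in addition, every differentiation of the equation produces a contribution from the linear gradient tensor $Z(\partial u)$. Each such error is of the form $(\text{bounded tensor})\cdot\nabla u\cdot\nabla^{2}u$ or $(\text{bounded tensor})\cdot\nabla^{2}u$, which after division by $\lambda_{1}$ gives a quantity of size $C\sqrt{K}\mathcal{F}$ or $C\mathcal{F}$, so that a final Cauchy--Schwarz with parameter $\varepsilon$ absorbs them all into the single term $-C\mathcal{F}/\varepsilon$ on the right-hand side of (4.7'').
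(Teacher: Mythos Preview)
Your overall strategy matches the paper's: decompose $L(Q)$ via (4.6), bound $L(\lambda_1)$, $L(|\rho|^2)$, $L(|\partial u|^2)$ separately (the paper's Lemmas~4.3--4.5), and combine using the explicit bounds on $\xi'$, $\eta'$ together with $N\le C_A\lambda_1$. Your handling of $L(\lambda_1)/\lambda_1$ is correct and mirrors the paper: differentiate the equation twice along $V_1$ to extract the concavity term, then absorb the bracket errors $[V_1,e_i]V_1\bar e_i(u)$, $[V_1,\bar e_i]V_1 e_i(u)$ via Cauchy--Schwarz with parameter $\varepsilon$, which degrades the coefficient $2$ to $2-\varepsilon$ and costs the extra $\varepsilon F^{i\bar i}|(\lambda_1)_i|^2/\lambda_1^2$.

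There is, however, a genuine gap in your treatment of $L(|\rho|^2)$. You assert that the cross term $\rho_{\alpha\beta}L(\rho_{\alpha\beta})$ is ``of size $N\cdot|\rho|\cdot\mathcal F$'', but this is false as stated. Applying $\partial_\alpha\partial_\beta$ to the equation gives
\[
L(u_{\alpha\beta}) = -F^{i\bar k,j\bar l}\partial_\alpha(\tilde g_{i\bar k})\partial_\beta(\tilde g_{j\bar l}) + (\text{commutator terms}) + O(\lambda_1)\mathcal F,
\]
and the first term on the right is a third-order quantity with no a~priori bound in terms of $N$. The key point---and the very reason for the shift $\rho=\nabla^2u+N\chi$---is that $\rho>0$ together with the concavity of $f$ yields
\[
-2\rho_{\alpha\beta}\,F^{i\bar k,j\bar l}\partial_\alpha(\tilde g_{i\bar k})\partial_\beta(\tilde g_{j\bar l}) \ge 0,
\]
so this term can be \emph{dropped} in the lower bound. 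Only after this step are the remaining cross terms of the form $\rho_{\alpha\beta}F^{i\bar i}\mathrm{Re}\big([\partial_\beta,e_i]\partial_\alpha\bar e_i u\big)$, which are then controlled by $\varepsilon\sum_{\alpha,\beta}F^{i\bar i}|e_i(u_{\alpha\beta})|^2 + (C/\varepsilon)N^2\mathcal F$ via Cauchy--Schwarz. Without the concavity-plus-positivity observation your bound does not hold.

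A minor correction on $L(|\partial u|^2)$: the relevant identity comes from applying $e_j$ once to $F(\omega_u)=h$, giving $F^{i\bar i}e_j(\tilde g_{i\bar i})=h_j$; there is no ``$e_jL(u)$'' in play. This identity converts the third-order pieces into bracket terms $F^{i\bar i}e_i[\bar e_i,e_j]u$ etc., which are then absorbed by Cauchy--Schwarz against the good term $F^{i\bar i}(|e_ie_ju|^2+|e_i\bar e_ju|^2)$, reducing its coefficient from $1$ to $3/4$ as you say.
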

	\medskip
	We remark that the fourth term is the bad term that we need to control. Since $F$ is both concave and elliptic, then the first, second and third term are nonnegative,  which play an imporant role in our proof of Theorem \ref{Thm4.1}. To prove Proposition \ref{lower bound of L Q}, we shall estimate the lower bounds of $L(\lambda_{1})$, $L(|\rho|^{2})$ and $L(|\de u|^{2})$, respectively.

	\medskip 
	First, we give the  lower bound of $L(\lambda_{1})$.
	\begin{lemma}\label{lower bound of L lambda1}
		For each $\ve\in(0,\frac{1}{3}]$, at $x_{0}$, we have
		\[
		L(\lambda_{1}) \geq
		(2-\ve)\sum_{\alpha>1}\frac{G^{i\bar{i}}|e_{i}(u_{V_{\alpha}V_{1}})|^{2}}{\lambda_{1}-\lambda_{\alpha}}
		-G^{i\bar{k},j\bar{l}}V_{1}(\tilde{g}_{i\bar{k}})V_{1}(\tilde{g}_{j\bar{l}})
		-\ve G^{i\bar{i}}\frac{|(\lambda_{1})_{i}|^{2}}{\lambda_{1}}
		-\frac{C}{\ve}\lambda_{1}\mathcal{G}.
		\]
	\end{lemma}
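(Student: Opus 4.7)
The plan is a second-order perturbation of the top eigenvalue $\lambda_1$ combined with twice differentiating the equation $F(\tilde g)=h$ in the direction $V_1$. With $V_1,\dots,V_{2n}$ extended to a neighborhood of $x_0$ by constant coefficients in the coordinate frame $\{\partial_\alpha\}$, the standard eigenvalue perturbation at a simple top eigenvalue gives, at $x_0$,
\[
(\lambda_1)_i=e_i(u_{V_1V_1}),\qquad e_i\bar e_i(\lambda_1)=e_i\bar e_i(u_{V_1V_1})+2\sum_{\alpha>1}\frac{|e_i(u_{V_1V_\alpha})|^2}{\lambda_1-\lambda_\alpha}+E_i,
\]
where $E_i$ collects corrections due to $\chi$ and $J$ not being parallel and is bounded by $C\lambda_1$ at $x_0$. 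Applying the operator $L$ from \eqref{L} and summing in $i$ then yields
\[
L(\lambda_1)\geq L(u_{V_1V_1})+2\sum_{\alpha>1}\frac{F^{i\bar i}|e_i(u_{V_1V_\alpha})|^2}{\lambda_1-\lambda_\alpha}-C\lambda_1\mathcal F.
\]

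To pick up the concavity term, I would differentiate the equation twice along $V_1$:
\[
F^{i\bar j}V_1V_1(\tilde g_{i\bar j})+F^{i\bar k,j\bar l}V_1(\tilde g_{i\bar k})V_1(\tilde g_{j\bar l})=V_1V_1(h).
\]
Writing $\tilde g_{i\bar j}=\chi_{i\bar j}+e_i\bar e_j u-[e_i,\bar e_j]^{(0,1)}(u)+u_pZ^p_{i\bar j}+u_{\bar p}\overline{Z^p_{i\bar j}}$ and commuting $V_1V_1$ past $e_i\bar e_j$, one shows $F^{i\bar j}V_1V_1(\tilde g_{i\bar j})=L(u_{V_1V_1})+O(\lambda_1\mathcal F)$: the $Z$-induced first-derivative pieces of $L(u_{V_1V_1})$ are matched, up to commutator errors, by the $u_p Z^p_{i\bar j}$ factors inside $V_1V_1(\tilde g_{i\bar j})$. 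Substituting replaces $L(u_{V_1V_1})$ by $-F^{i\bar k,j\bar l}V_1(\tilde g_{i\bar k})V_1(\tilde g_{j\bar l})+V_1V_1(h)+O(\lambda_1\mathcal F)$, producing the concavity term in the statement modulo $C\lambda_1\mathcal F$.

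The remaining errors contain factors linear in $e_i(u_{V_1V_1})$ or $e_i(u_{V_1V_\alpha})$ that cannot be controlled by $C\lambda_1\mathcal F$ directly. I would absorb them by Cauchy--Schwarz with parameter $\varepsilon$: an estimate
\[
2|a\cdot e_i(u_{V_1V_1})|\leq \varepsilon\,\frac{|(\lambda_1)_i|^2}{\lambda_1}+\frac{C}{\varepsilon}\lambda_1|a|^2
\]
produces the $-\varepsilon F^{i\bar i}|(\lambda_1)_i|^2/\lambda_1$ correction, while a similar split applied to the gap-sum side lowers its coefficient from $2$ to $2-\varepsilon$ and contributes additional $-C\varepsilon^{-1}\lambda_1\mathcal F$. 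The main obstacle is the careful bookkeeping of commutators in the almost Hermitian setting: because $J$ is not integrable, $[e_i,\bar e_i]^{(0,1)}$ is nontrivial and the real vector fields $V_\alpha$ mix $(1,0)$ and $(0,1)$ types, so commuting $V_1V_1$ past $e_i\bar e_i$ and past the $Z$-terms generates numerous lower-order terms involving the torsion of $\chi$, the Nijenhuis tensor of $J$ and derivatives of $Z^p_{i\bar j}$; each must be systematically bounded by $C\lambda_1\mathcal F$ using the real-Hessian bound \eqref{Hessian lambda 1} together with $|\partial u|^2\leq K$.
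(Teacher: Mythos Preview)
Your strategy matches the paper's: eigenvalue perturbation, twice differentiating $F(\tilde g)=h$ along $V_1$, and absorbing the third-derivative commutator errors by Cauchy--Schwarz with parameter $\varepsilon$. The one slip is the claim in your second paragraph that $F^{i\bar j}V_1V_1(\tilde g_{i\bar j})=L(u_{V_1V_1})+O(\lambda_1\mathcal F)$. Commuting $V_1V_1$ past $e_i\bar e_i$ produces, beyond $O(\lambda_1\mathcal F)$ terms, the specific third-order piece
\[
2(\mathrm{II})\;:=\;2F^{i\bar i}\bigl\{[V_1,\bar e_i]V_1e_i(u)+[V_1,e_i]V_1\bar e_i(u)\bigr\},
\]
so the correct relation is $L(u_{V_1V_1})\geq F^{i\bar i}V_1V_1(\tilde g_{i\bar i})-2(\mathrm{II})-C\lambda_1\mathcal F$. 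This $(\mathrm{II})$ is exactly the ``remaining error'' you handle in your third paragraph: writing $[V_1,e_i]=\sum_\alpha\tau_{i\alpha}V_\alpha$ at $x_0$ with bounded $\tau_{i\alpha}$ gives $2(\mathrm{II})\leq C\sum_\alpha F^{i\bar i}|e_i(u_{V_\alpha V_1})|+C\lambda_1\mathcal F$, after which your $\varepsilon$-splits (into $\varepsilon F^{i\bar i}|(\lambda_1)_i|^2/\lambda_1$ for $\alpha=1$ and into the gap sum for $\alpha>1$) go through exactly as you describe. So the argument is right once you locate the bad term correctly.
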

	
	\begin{proof}
		The following formulas are well-known (see e.g. \cite{CTW19,Spruck05,Szekelyhidi18}):
		\begin{equation*}
			\begin{split}
				\frac{\partial \lambda_{1}}{\partial \Phi^{\alpha}_{\beta} }
				= {} & V_{1}^{\alpha}V_{1}^{\beta}, \\
				\frac{\partial^{2} \lambda_{1}}{\partial \Phi^{\alpha}_{\beta}\partial \Phi^{\gamma}_{\delta}}
				= {} & \sum_{\mu>1}\frac{V_{1}^{\alpha}V_{\mu}^{\beta}V_{\mu}^{\gamma}V_{1}^{\delta} +V_{\mu}^{\alpha}V_{1}^{\beta}V_{1}^{\gamma}V_{\mu}^{\delta}}{\lambda_{1}-\lambda_{\mu}}.
			\end{split}
		\end{equation*}
		Then we compute
		\begin{equation}\label{lower bound of L lambda1 eqn 1}
			\begin{split}
				L(\lambda_{1})
				= {} & G^{i\bar{i}}\frac{\partial^{2} \lambda_{1}}{\partial \Phi^{\alpha}_{\beta}\partial \Phi^{\gamma}_{\delta}}e_{i}(\Phi^{\gamma}_{\delta})\bar{e}_{i}(\Phi^{\alpha}_{\beta})
				+G^{i\bar{i}}\frac{\partial \lambda_{1}}{\partial\Phi^{\alpha}_{\beta}}(e_{i}\bar{e}_{i}
				-[e_{i},\bar{e}_{i}]^{(0,1)})(\Phi^{\alpha}_{\beta})\\
				& +G^{i\bar{i}}\frac{\partial \lambda_{1}}{\partial\Phi^{\alpha}_{\beta}}\big(e_{p}(\Phi^{\alpha}_{\beta})Z^{p}_{i\bar i}+\bar{e}_{p}(\Phi^{\alpha}_{\beta})\overline{Z^{p}_{i\bar i}}\big)\\
				= {} & G^{i\bar{i}}\frac{\partial^{2} \lambda_{1}}{\partial \Phi^{\alpha}_{\beta}\partial \Phi^{\gamma}_{\delta}}e_{i}(u_{\gamma\delta})\bar{e}_{i}(u_{\alpha\beta})+G^{i\bar{i}}\frac{\partial \lambda_{1}}{\partial\Phi^{\alpha}_{\beta}}(e_{i}\bar{e}_{i}-[e_{i},\bar{e}_{i}]^{(0,1)})(u_{\alpha\beta})\\
				&+G^{i\bar{i}}\frac{\partial \lambda_{1}}{\partial\Phi^{\alpha}_{\beta}}u_{\gamma\beta}e_{i}\bar{e}_{i}(\chi^{\alpha\gamma})+G^{i\bar{i}}\frac{\partial \lambda_{1}}{\partial\Phi^{\alpha}_{\beta}}\big(e_{p}(\Phi^{\alpha}_{\beta})Z^{p}_{i\bar i}+\bar{e}_{p}(\Phi^{\alpha}_{\beta})\overline{Z^{p}_{i\bar i}}\big)\\
				\geq {} & 2\sum_{\alpha>1}G^{i\bar{i}}\frac{|e_{i}(u_{V_{\alpha}V_{1}})|^{2}}{\lambda_{1}-\lambda_{\alpha}}
				+G^{i\bar{i}}(e_{i}\bar{e}_{i}-[e_{i},\bar{e}_{i}]^{(0,1)})(u_{V_{1}V_{1}})\\
				&+G^{i\bar{i}}\big(e_{p}(u_{V_{1}V_{1}})Z^{p}_{i\bar i}+\bar{e}_{p}(u_{V_{1}V_{1}})\overline{{Z}^{p}_{i\bar i}}\big)-C\lambda_{1}\mathcal{G}.\\
			\end{split}
		\end{equation}
		\begin{claim}\label{claim0}
			At $x_{0}$, we have
			\begin{equation*}
				G^{i\bar{i}}\big(e_{p}(u_{V_{1}V_{1}})Z^{p}_{i\bar i}+\bar{e}_{p}(u_{V_{1}V_{1}})\overline{{Z}^{p}_{i\bar i}}\big)\geq G^{i\bar{i}}V_{1}V_{1}\big(u_{p}Z^{p}_{i\bar i}+u_{\bar{p}}\overline{Z^{p}_{i\bar i}}\big)-C\lambda_{1}\mathcal{G}.
			\end{equation*}	
		\end{claim}
		\begin{proof}
			By a direct calculation,
			\begin{equation*}
				\begin{split}
					G^{i\bar{i}}e_{p}(u_{V_{1}V_{1}})Z^{p}_{i\bar i}
					= {} & G^{i\bar{i}}e_{p}
					(V_{1}V_{1}u-(\nabla_{V_{1}}V_{1})u)Z^{p}_{i\bar i} \\
					= {} &
					G^{i\bar{i}}e_{p}V_{1}V_{1}(u)\cdot Z^{p}_{i\bar i}-O(\lambda_{1})\mathcal{G}\\
					= {} &
					G^{i\bar{i}}V_{1}V_{1}e_{p}(u)\cdot Z^{p}_{i\bar i}-O(\lambda_{1})\mathcal{G}\\
					= {} &
					G^{i\bar{i}}V_{1}V_{1}\big(u_{p}Z^{p}_{i\bar i}\big)-O(\lambda_{1})\mathcal{G}.\\
				\end{split}
			\end{equation*}	
			Here and hereafter $O(\lambda_{1})$ means the terms those can be controlled by $C\lambda_{1}$.
			Similarly, we also obtain
			\begin{equation*}
				G^{i\bar{i}}\bar{e}_{p}(u_{V_{1}V_{1}})\overline{{Z}^{p}_{i\bar i}}= 
				G^{i\bar{i}}V_{1}V_{1}\big(u_{\bar{p}}\overline{{Z}^{p}_{i\bar i}}\big)-O(\lambda_{1})\mathcal{G}.
			\end{equation*}
			Then the claim follows.
		\end{proof}
		\begin{claim}\label{claim 1}
			At $x_{0}$, we have
			\begin{equation*}
				\begin{split}
					(\mathrm{I})=&G^{i\bar{i}}(e_{i}\bar{e}_{i}-[e_{i},\bar{e}_{i}]^{(0,1)})(u_{V_{1}V_{1}})	+G^{i\bar{i}}\big(e_{p}(u_{V_{1}V_{1}})Z^{p}_{i\bar i}+\bar{e}_{p}(u_{V_{1}V_{1}})\overline{{Z}^{p}_{i\bar i}}\big)\\
					\geq  & -G^{i\bar{k},j\bar{l}}V_{1}(\tilde{g}_{i\bar{k}})V_{1}(\tilde{g}_{j\bar{l}})
					-C\lambda_{1}\mathcal{G}-2(\mathrm{II}),
				\end{split}
			\end{equation*}
			where
			\[
			(\mathrm{II})= G^{i\bar{i}}\big\{[V_{1},\bar{e}_{i}]V_{1}e_{i}(u)+[V_{1},e_{i}]V_{1}\bar{e}_{i}(u)\big\}.
			\]
		\end{claim}
		
		\begin{proof}[Proof of Claim \ref{claim 1}]
			It is clear that
			\begin{equation}\label{claim2.1}
				\begin{split}
					 G^{i\bar{i}}(&e_{i}\bar{e}_{i}-[e_{i},\bar{e}_{i}]^{(0,1)})(u_{V_{1}V_{1}}) \\
					= {} & G^{i\bar{i}}(e_{i}\bar{e}_{i}-[e_{i},\bar{e}_{i}]^{(0,1)})
					(V_{1}V_{1}u-(\nabla_{V_{1}}V_{1})u) \\
					\geq {} &
					G^{i\bar{i}}e_{i}\bar{e}_{i}V_{1}V_{1}u-G^{i\bar{i}}e_{i}\bar{e}_{i}(\nabla_{V_{1}}V_{1})u
					-G^{i\bar{i}}[e_{i},\bar{e}_{i}]^{(0,1)}V_{1}V_{1}u-C\lambda_{1}\mathcal{G}.
				\end{split}
			\end{equation}	
			Set $W=\nabla_{V_{1}}V_{1}$. Then
			\begin{equation*}
				\begin{split}
					e_{i}\bar{e}_{i}W(u)
					= {} & e_{i}W\bar{e}_{i}(u)+e_{i}[\bar{e}_{i},W](u)\\
					= {} & We_{i}\bar{e}_{i}(u)+[e_{i},W]\bar{e}_{i}(u)+e_{i}[\bar{e}_{i},W](u)\\
					= {} & W(\tilde{g}_{i\bar{i}})+W[e_{i},\bar{e}_{i}]^{(0,1)}(u)
					+[e_{i},W]\bar{e}_{i}(u)+e_{i}[\bar{e}_{i},W](u)+O(\lambda_{1}).
				\end{split}
			\end{equation*}
			Applying $W$ to the equation \eqref{nonlinear equation},
			\begin{equation*}
				G^{i\bar{i}}W(\tilde{g}_{i\bar{i}})=W(h).
			\end{equation*}
			It follows that
			\[
			|G^{i\bar{i}}e_{i}\bar{e}_{i}W(u)|
			= |G^{i\bar{i}}e_{i}\bar{e}_{i}(\nabla_{V_{1}}V_{1})(u)|
			\leq C\lambda_{1}\mathcal{G}.
			\]
			Combining this with \eqref{claim2.1},
			\begin{equation}\label{claim 1 eqn 1}
				\begin{split}
	    G^{i\bar{i}}(e_{i}\bar{e}_{i}&-[e_{i},\bar{e}_{i}]^{(0,1)})(u_{V_{1}V_{1}}) \\
					\geq {} & G^{i\bar{i}}\big\{e_{i}\bar{e}_{i}V_{1}V_{1}(u)
					-[e_{i},\bar{e}_{i}]^{(0,1)}V_{1}V_{1}(u)\big\}-C\lambda_{1}\mathcal{G}.
				\end{split}
			\end{equation}
			By direct calculation, we see that
			\begin{equation*}
				\begin{split}
					 G^{i\bar{i}}&\big\{e_{i}\bar{e}_{i}V_{1}V_{1}(u)
					-[e_{i},\bar{e}_{i}]^{(0,1)}V_{1}V_{1}(u)\big\}\\
					= {} & G^{i\bar{i}}\big\{e_{i}V_{1}\bar{e}_{i}V_{1}(u)-e_{i}[V_{1},\bar{e}_{i}]V_{1}(u)
					-V_{1}[e_{i},\bar{e}_{i}]^{(0,1)}V_{1}(u)\big\}+O(\lambda_{1})\mathcal{G} \\
					= {} & G^{i\bar{i}}\big\{V_{1}e_{i}\bar{e}_{i}V_{1}(u)-[V_{1},e_{i}]\bar{e}_{i}V_{1}(u)-[V_{1},\bar{e}_{i}]e_{i}V_{1}(u)
					-V_{1}V_{1}[e_{i},\bar{e}_{i}]^{(0,1)}(u)\big\}+O(\lambda_{1})\mathcal{G}\\	
					= {} & G^{i\bar{i}}\big\{V_{1}e_{i}\bar{e}_{i}V_{1}(u)-V_{1}V_{1}[e_{i},\bar{e}_{i}]^{(0,1)}(u)\big\}
					+O(\lambda_{1})\mathcal{G}-(\mathrm{II})\\
					= {} & G^{i\bar{i}}\big\{V_{1}e_{i}V_{1}\bar{e}_{i}(u)-V_{1}e_{i}[V_{1},\bar{e}_{i}](u)-V_{1}V_{1}[e_{i},\bar{e}_{i}]^{(0,1)}(u)\big\}
					+O(\lambda_{1})\mathcal{G}-(\mathrm{II})\\
					= {} & G^{i\bar{i}}\big\{V_{1}V_{1}(e_{i}\bar{e}_{i}-[e_{i},\bar{e}_{i}]^{(0,1)})(u)-V_{1}[V_{1},e_{i}]\bar{e}_{i}(u)-V_{1}e_{i}[V_{1},\bar{e}_{i}](u)\big\}
					+O(\lambda_{1})\mathcal{G}-(\mathrm{II})\\
					= {} & G^{i\bar{i}}V_{1}V_{1}\big(e_{i}\bar{e}_{i}(u)-[e_{i},\bar{e}_{i}]^{(0,1)}(u)\big)
					+O(\lambda_{1})\mathcal{G}-2(\mathrm{II}). \\
				\end{split}
			\end{equation*}
			Substituting this with Claim \ref{claim0} into \eqref{claim 1 eqn 1}, we obtain
			\begin{equation}\label{claim 1 eqn 2}
				(\mathrm{I})\geq G^{i\bar{i}}V_{1}V_{1}(\ti{g}_{i\ov{i}})+O(\lambda_{1})\mathcal{G}-2(\mathrm{II}).
			\end{equation}
			To deal with the first term, we apply $V_{1}V_{1}$ to the equation \eqref{nonlinear equation} and obtain
			\begin{equation}\label{claim 1 eqn 3}
				G^{i\bar{i}}V_{1}V_{1}(\tilde{g}_{i\bar{i}})
				= -G^{i\bar{k},j\bar{l}}V_{1}(\tilde{g}_{i\bar{k}})V_{1}(\tilde{g}_{j\bar{l}})+V_{1}V_{1}(h).
			\end{equation}
			Then Claim \ref{claim 1} follows from \eqref{claim 1 eqn 2} and \eqref{claim 1 eqn 3}.
		\end{proof}
		
\medskip
Using the similar argument of \cite[Claim 2]{CHZ21}, for each $\ve\in(0,\frac{1}{3}]$, we deduce
\begin{equation}\label{lower bound of II}
  2(\mathrm{II})\leq \ve\frac{G^{i\bar{i}}|(\lambda_{1})_{i}|^{2}}{\lambda_{1}}
			+\ve\sum_{\alpha>1}\frac{G^{i\bar{i}}|e_{i}(u_{V_{\alpha}V_{1}})|^{2}}{\lambda_{1}-\lambda_{\alpha}}
			+\frac{C}{\ve}\lambda_{1}\mathcal{G}.  
\end{equation}
Combining \eqref{lower bound of L lambda1 eqn 1}, \eqref{lower bound of II} and Claim 2, we obtain Lemma \ref{lower bound of L lambda1}.
	\end{proof}
	
	\medskip
	Second, we estimate the lower bound of $L(|\rho|^{2})$.
	\begin{lemma}\label{lower bound of L rho}
		For each $\ve\in(0,\frac{1}{3}]$, at $x_{0}$, we have
		\begin{equation*}
			\begin{split}
				L(|\rho|^{2})\geq (2-\varepsilon)\sum_{\alpha,\beta}G^{i\bar{i}}|e_{i}(u_{\alpha\beta})|^{2}-\frac{C}{\varepsilon}N^{2}\mathcal F.\\
			\end{split}
		\end{equation*}
	\end{lemma}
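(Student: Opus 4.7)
At the maximum point $x_{0}$, in the adapted coordinates and unitary frame already chosen in the paper, the natural first step is to apply $L$ to $|\rho|^{2}=\chi^{\alpha\gamma}\chi^{\beta\delta}\rho_{\alpha\beta}\rho_{\gamma\delta}$. Since $\chi^{\alpha\gamma}(x_{0})=\delta^{\alpha\gamma}$ and $\partial_{\gamma}\chi_{\alpha\beta}(x_{0})=0$, the Leibniz-type identity $L(\phi\psi)=\phi L(\psi)+\psi L(\phi)+F^{i\bar i}(e_{i}\phi\,\bar e_{i}\psi+\bar e_{i}\phi\,e_{i}\psi)$ yields, at $x_{0}$,
\[
L(|\rho|^{2})=2\sum_{\alpha,\beta}\rho_{\alpha\beta}\,L(u_{\alpha\beta})+2\sum_{\alpha,\beta}F^{i\bar i}|e_{i}(u_{\alpha\beta})|^{2}+E_{1},
\]
with $|E_{1}|\leq CN^{2}\mathcal F$, since second derivatives of $\chi^{-1}$, the contribution $NL(\chi_{\alpha\beta})$, and the fact that $e_{i}\rho_{\alpha\beta}=e_{i}u_{\alpha\beta}$ at $x_{0}$ together contribute only $O(N^{2})\mathcal F$. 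The middle term is precisely the good term in the lemma, so the core of the proof is a lower bound on $\sum_{\alpha,\beta}\rho_{\alpha\beta}L(u_{\alpha\beta})$.

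\textbf{Reducing to the linearized equation and exploiting concavity.} Following the blueprint of Claim 4.3 with $V_{1}V_{1}$ replaced by the real derivation $\partial_{\alpha}\partial_{\beta}$, I would commute $\partial_{\alpha}\partial_{\beta}$ with $e_{i}\bar e_{i}-[e_{i},\bar e_{i}]^{(0,1)}$ and match the $Z$-type gradient terms in $L(u_{\alpha\beta})$ against those produced when $\partial_{\alpha}\partial_{\beta}$ hits $Z_{i\bar i}^{p}u_{p}+\overline{Z_{i\bar i}^{p}}u_{\bar p}$ inside $\tilde g_{i\bar i}$; their principal parts cancel. This yields
\[
L(u_{\alpha\beta})=F^{i\bar i}\partial_{\alpha}\partial_{\beta}\tilde g_{i\bar i}+R_{\alpha\beta},
\]
where $R_{\alpha\beta}$ is a collection of commutator errors. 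Applying $\partial_{\alpha}\partial_{\beta}$ to the equation $F(\tilde g)=h$ replaces the main term by $\partial_{\alpha}\partial_{\beta}h-F^{i\bar k,j\bar l}\partial_{\alpha}\tilde g_{i\bar k}\partial_{\beta}\tilde g_{j\bar l}$. The crucial positivity now enters: diagonalise $\rho$ in an orthonormal eigenbasis $\{V_{\alpha}\}$ with eigenvalues $\rho_{\alpha}=\lambda_{\alpha}+N\geq 0$. Then
\[
-\sum_{\alpha,\beta}\rho_{\alpha\beta}F^{i\bar k,j\bar l}\partial_{\alpha}\tilde g_{i\bar k}\partial_{\beta}\tilde g_{j\bar l}=-\sum_{\alpha}\rho_{\alpha}F^{i\bar k,j\bar l}V_{\alpha}(\tilde g_{i\bar k})V_{\alpha}(\tilde g_{j\bar l})\geq 0,
\]
because concavity of $F$ makes each $F^{i\bar k,j\bar l}V_{\alpha}(\tilde g_{i\bar k})V_{\alpha}(\tilde g_{j\bar l})\leq 0$ and $\rho_{\alpha}\geq 0$. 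Thus the (indefinite) concavity term, once contracted against the nonnegative tensor $\rho$, becomes a nonnegative contribution.

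\textbf{Error absorption and main obstacle.} The remaining terms come from $\rho_{\alpha\beta}\partial_{\alpha}\partial_{\beta}h$, bounded by $CN$, and from $\rho_{\alpha\beta}R_{\alpha\beta}$. A careful inspection shows that $R_{\alpha\beta}$ splits into an $O(N)\mathcal F$ piece and a linear combination with bounded coefficients of $e_{i}(u_{\alpha\beta})$ (arising from third derivatives of $u$ produced by commuting $\partial_{\alpha}\partial_{\beta}$ past $e_{i}\bar e_{i}$, $[e_{i},\bar e_{i}]^{(0,1)}$ and $Z$). Multiplying by $|\rho_{\alpha\beta}|\leq CN$ and applying Cauchy--Schwarz,
\[
CN\,F^{i\bar i}|e_{i}(u_{\alpha\beta})|\leq \tfrac{\varepsilon}{2}F^{i\bar i}|e_{i}(u_{\alpha\beta})|^{2}+\tfrac{C}{\varepsilon}N^{2}\mathcal F,
\]
which is precisely what produces both the loss of $\varepsilon$ in $(2-\varepsilon)$ and the error $-\frac{C}{\varepsilon}N^{2}\mathcal F$. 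The main obstacle is the bookkeeping: three independent sources of commutator terms must be tracked — the torsion of $J$ through $[e_{i},\bar e_{i}]^{(0,1)}$ (absent in the integrable case), the gradient term $Z(\partial u)$ whose $\partial_{\alpha}\partial_{\beta}$-derivative brings in third derivatives of $u$ that have to be matched and cancelled against the $Z$-terms already present in $L$, and second derivatives of the frame/metric coefficients at $x_{0}$. Verifying that after these cancellations only absorbable terms of the form $O(N)\,F^{i\bar i}|e_{i}(u_{\alpha\beta})|+O(N^{2})\mathcal F$ remain is the most delicate part of the argument.
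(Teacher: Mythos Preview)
Your proposal is correct and follows essentially the same route as the paper's proof: expand $L(|\rho|^{2})$ by Leibniz, reduce $L(u_{\alpha\beta})$ to $F^{i\bar i}\partial_{\alpha}\partial_{\beta}\tilde g_{i\bar i}$ plus commutator errors via the same computation as in the $L(\lambda_{1})$ estimate, differentiate the equation twice to bring in the concavity term, exploit $\rho\geq 0$ and concavity of $F$ to discard that term (your diagonalization of $\rho$ is exactly the justification the paper leaves implicit), and absorb the remaining $O(N)\cdot F^{i\bar i}|e_{i}(u_{\alpha\beta})|$ errors by Cauchy--Schwarz. The paper in fact streamlines the commutator bookkeeping by simply citing the analogous computation from Lemma~\ref{lower bound of L lambda1}, but your explicit description of the three error sources and the $Z$-term cancellation is accurate.
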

	
	\begin{proof}
		We remark that the linear gradient terms in $L$ can be absorbed by $N^{2}\mathcal F$. Thus the proof is similar to \cite{CHZ21}.
	\end{proof}

	\medskip
	Finally, we give the lower bound of $L(|\de u|^{2})$.
	
	\begin{lemma}\label{lower bound of L de u}
		At $x_{0}$, we have
		\begin{equation}\label{3.7}
			L(|\partial u|^{2}) \geq \frac{3}{4} \sum_{j}G^{i\bar{i}}(|e_{i}e_{j}u|^{2}+|e_{i}\bar{e}_{j}u|^{2})-CK\mathcal{G}.
		\end{equation}
	\end{lemma}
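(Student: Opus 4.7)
The plan is to expand $L(|\partial u|^2)$ via the Leibniz rule and isolate the desired positive quadratic term, then control the remaining cross terms using the once-differentiated equation together with Cauchy--Schwarz. Writing $|\partial u|^2 = u_k u_{\bar k}$ in the unitary frame $\{e_i\}$ at $x_0$ and substituting into the definition \eqref{L} of $L$, direct Leibniz expansion of $e_i\bar e_i(u_k u_{\bar k})$ produces the two cross products $e_i u_k \cdot \bar e_i u_{\bar k} = |e_i u_k|^2$ and $\bar e_i u_k \cdot e_i u_{\bar k} = |\bar e_i u_k|^2$ (using $u_{\bar k}=\overline{u_k}$), while the commutator $[e_i,\bar e_i]^{(0,1)}$ and the $Z$-terms split as derivations. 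This yields
\begin{equation*}
L(|\partial u|^2) \;=\; \sum_{i,k} F^{i\bar i}\bigl(|e_i u_k|^2 + |\bar e_i u_k|^2\bigr) \;+\; \sum_k \bigl(u_{\bar k}\,L(u_k) + u_k\,L(u_{\bar k})\bigr),
\end{equation*}
so it suffices to bound the cross-term sum from below by $-\tfrac{1}{4}$ of the good quadratic form minus $CK\mathcal F$.

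To estimate $L(u_k)$, I would apply $e_k$ to the equation \eqref{nonlinear equation} to get $F^{i\bar i} e_k(\tilde g_{i\bar i}) = e_k h$, and unpack $e_k \tilde g_{i\bar i}$ from the defining formula for $\tilde g_{i\bar i}$. Comparing with
\begin{equation*}
L(u_k) \;=\; F^{i\bar i}\bigl\{e_i\bar e_i(e_k u) - [e_i,\bar e_i]^{(0,1)}(e_k u) + e_p(e_k u)\,Z^p_{i\bar i} + \bar e_p(e_k u)\,\overline{Z^p_{i\bar i}}\bigr\},
\end{equation*}
the only discrepancies are commutator corrections: $e_i\bar e_i e_k u - e_k e_i\bar e_i u = [e_i,e_k]\bar e_i u + e_i[\bar e_i,e_k]u$, and $e_p(e_k u) - e_k(e_p u) = [e_p,e_k]u$ in the $Z$-part (analogously for the $\overline{Z}$-part and the $[e_i,\bar e_i]^{(0,1)}$ piece). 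Each commutator bracket expands into smooth bounded combinations of $e_j, \bar e_j$, so these corrections are either first-order in $u$ (contributing $C|\partial u|\mathcal F$) or second-order derivatives of $u$ multiplied by bounded coefficients. The remaining pieces $e_k h$, $F^{i\bar i}e_k g_{i\bar i}$, $F^{i\bar i}u_p e_k Z^p_{i\bar i}$ etc.\ contribute $C(1+|\partial u|)\mathcal F$. Collecting terms gives
\begin{equation*}
|L(u_k)| \;\leq\; C\bigl(1+|\partial u|\bigr)\mathcal F \;+\; C\sum_{i,j} F^{i\bar i}\bigl(|e_i e_j u| + |e_i \bar e_j u|\bigr).
\end{equation*}

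Multiplying by $|u_{\bar k}| \leq \sqrt K$, summing over $k$, and applying Cauchy--Schwarz in the form $2\sqrt K\cdot F^{i\bar i}|e_i e_j u| \leq \tfrac{1}{8}F^{i\bar i}|e_i e_j u|^2 + C'K F^{i\bar i}$ (and analogously for $|e_i\bar e_j u|$) absorbs the second-order contribution into at most one quarter of the good quadratic form, with residue $CK\mathcal F$. This produces the desired inequality with coefficient $\tfrac{3}{4}$. The main obstacle is careful bookkeeping: one must verify that every correction term arising from commuting $e_k$ past $e_i\bar e_i$, $[e_i,\bar e_i]^{(0,1)}$, and the $Z$-operator is either genuinely first-order or a bounded second-order operator, so that the Cauchy--Schwarz step never produces a term of order higher than $F^{i\bar i}|\nabla^2 u|^2$ in the second-order piece or higher than $K\mathcal F$ in the remainder. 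The presence of the almost-complex structure $J$ (whose Nijenhuis tensor enters through $[e_i,\bar e_j]^{(0,1)}$) and of the linear gradient perturbation $Z(\partial u)$ adds extra torsion-type brackets beyond the K\"ahler or Hermitian case, but the structure of the argument is unchanged.
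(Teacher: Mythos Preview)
Your proposal is correct and follows essentially the same route as the paper. The paper also splits $L(|\partial u|^{2})$ into the good quadratic term $I_{3}=\sum_{i,j}F^{i\bar i}(|e_{i}e_{j}u|^{2}+|e_{i}\bar e_{j}u|^{2})$ and cross terms $I_{1}+I_{2}$, differentiates the equation once to express the cross terms as $h_{j}$ plus commutator corrections, and then uses Cauchy--Schwarz exactly as you describe to absorb the second-order pieces into $\tfrac{1}{4}I_{3}$ with residue $CK\mathcal F$; your packaging of the cross terms as $u_{\bar k}L(u_{k})+u_{k}L(u_{\bar k})$ is just a cosmetic reorganisation of the same computation.
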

	
	\begin{proof}
		By a direct calculation, we deduce
		\[
		\begin{split}
			L(|\partial u|^{2})=&G^{i\bar{i}}\Big({e_{i}e_{\bar{i}}}(|\partial u|^{2})-[e_{i},\bar{e}_{i}]^{(0,1)}(|\partial u|^{2})+e_{p}(|\partial u|^{2})Z^{p}_{i\bar i}+\bar{e}_{p}(|\partial u|^{2})\overline{Z^{p}_{i\bar i}}\Big)\\
			=&  I_{1}+I_{2}+I_{3},
		\end{split}
		\]	
		where
		\[
		\begin{split}
			I_{1}
			= {} & G^{i\bar{i}}\Big(e_{i}\bar{e}_{i}e_{j}u
			-[e_{i},\bar{e}_{i}]^{(0,1)}e_{j}u+e_{p}e_{j}(u)Z^{p}_{i\bar i}+\bar{e}_{p}e_{j}(u)\overline{Z^{p}_{i\bar i}}\Big)\bar{e}_{j}u, \\
			I_{2}
			= {} & G^{i\bar{i}}\Big(e_{i}\bar{e}_{i}\bar{e}_{j}u
			-[e_{i},\bar{e}_{i}]^{(0,1)}\bar{e}_{j}u+e_{p}\bar{e}_{j}(u)Z^{p}_{i\bar i}+\bar{e}_{p}\bar{e}_{j}(u)\overline{Z^{p}_{i\bar i}}\Big)e_{j}u,\\
			I_{3}
			= {} & G^{i\bar{i}}(|e_{i}e_{j}u|^{2}
			+|e_{i}\bar{e}_{j}u|^{2}).
		\end{split}
		\]
		Applying $e_{j}$ to the equation \eqref{nonlinear equation},
		\[
		G^{i\bar{i}}e_{j}\big(e_{i}\bar{e}_{i}u-[e_{i},\bar{e}_{i}]^{(0,1)}u+e_{p}(u)Z^{p}_{i\bar i}+\bar{e}_{p}(u)\overline{Z^{p}_{i\bar i}}\big)=h_{j}.
		\]
		Note that
		\[
		\begin{split}
			 G^{i\bar{i}}\Big(&e_{i}\bar{e}_{i}e_{j}u
			-[e_{i},\bar{e}_{i}]^{(0,1)}e_{j}u+e_{p}e_{j}(u)Z^{p}_{i\bar i}+\bar{e}_{p}e_{j}(u)\overline{Z^{p}_{i\bar i}}\Big) \\
			= {} &  G^{i\bar{i}}(e_{j}e_{i}\bar{e}_{i}u+e_{i}[\bar{e}_{i},e_{j}]u
			+[e_{i},e_{j}]\bar{e}_{i}u-[e_{i},\bar{e}_{i}]^{(0,1)}e_{j}u)\\
			&+G^{i\bar{i}}(e_{j}e_{p}(u)Z^{p}_{i\bar i}+e_{j}\bar{e}_{p}(u)\overline{Z^{p}_{i\bar i}})+O(\sqrt{K})\mathcal{G}\\
			= {} &  G^{i\bar{i}}(e_{j}e_{i}\bar{e}_{i}u+e_{i}[\bar{e}_{i},e_{j}]u
			+[e_{i},e_{j}]\bar{e}_{i}u-[e_{i},\bar{e}_{i}]^{(0,1)}e_{j}u)\\
			&+G^{i\bar{i}}e_{j}(e_{p}(u)Z^{p}_{i\bar i}+\bar{e}_{p}(u)\overline{Z^{p}_{i\bar i}})+O(\sqrt{K})\mathcal{G}\\
			= {} & h_{j}+G^{i\bar{i}}e_{j}[e_{i},\bar{e}_{i}]^{(0,1)}u
			+G^{i\bar{i}}(e_{i}[\bar{e}_{i},e_{j}]u
			+[e_{i},e_{j}]\bar{e}_{i}u-[e_{i},\bar{e}_{i}]^{(0,1)}e_{j}u)+O(\sqrt{K})\mathcal{G}\\
			= {} & h_{j}+G^{i\bar{i}}\big\{e_{i}[\bar{e}_{i},e_{j}]u
			+\bar{e}_{i}[e_{i},e_{j}]u+[[e_{i},e_{j}],\bar{e}_{i}]u
			-[[e_{i},\bar{e}_{i}]^{(0,1)},e_{j}]u\big\}+O(\sqrt{K})\mathcal{G},
		\end{split}
		\]
		where $O(\sqrt{K})$ means the terms those can be controlled by $C\sqrt{K}$.
		Similarly,
		\[
		\begin{split}
	 G^{i\bar{i}}\Big(&e_{i}\bar{e}_{i}\bar{e}_{j}u
			-[e_{i},\bar{e}_{i}]^{(0,1)}\bar{e}_{j}u+e_{p}\bar{e}_{j}(u)Z^{p}_{i\bar i}+\bar{e}_{p}\bar{e}_{j}(u)\overline{Z^{p}_{i\bar i}}\Big) \\
			= {} & h_{\ov{j}}+G^{i\bar{i}}\big\{e_{i}[\bar{e}_{i},\ov{e}_{j}]u
			+\bar{e}_{i}[e_{i},\ov{e}_{j}]u+[[e_{i},\ov{e}_{j}],\bar{e}_{i}]u
			-[[e_{i},\bar{e}_{i}]^{(0,1)},\ov{e}_{j}]u\big\}+O(\sqrt{K})\mathcal{G}.
		\end{split}
		\]
		By the Cauchy-Schwarz inequality,
		\begin{equation}\label{4.9}
			\begin{split}
				I_{1}+I_{2} 
				\geq {} & 2\textrm{Re}\big(\sum_{j}h_{j}u_{\bar{j}}\big)
				-C|\partial u|\sum_{j}G^{i\bar{i}}(|e_{i}e_{j}u|+|e_{i}\bar{e}_{j}u|)-CK\mathcal{G}\\
				\geq {} & -C|\de u|
				-\frac{1}{4}\sum_{j}G^{i\bar{i}}(|e_{i}e_{j}u|^{2}+|e_{i}\bar{e}_{j}u|^{2})-CK\mathcal{G}.
			\end{split}
		\end{equation}
		Then we have
		\[
		\begin{split}
			L(|\partial u|^{2}) = {} & I_{1}+I_{2}+I_{3} 
			\geq {}  \frac{3}{4}\sum_{j}G^{i\bar{i}}(|e_{i}e_{j}u|^{2}+|e_{i}\bar{e}_{j}u|^{2})-CK\mathcal{G}.
		\end{split}
		\]
		This proves the lemma.
	\end{proof}
	
	We will use the above computations to prove Proposition \ref{lower bound of L Q}.
	
	\begin{proof}[Proof of Proposition \ref{lower bound of L Q}]
		Combining \eqref{L Q} and Lemmas \ref{lower bound of L lambda1}--\ref{lower bound of L de u}, we obtain
		\begin{equation*}
			\begin{split}
				0 \geq {} & (2-\ve)
				\sum_{\alpha>1}\frac{G^{i\bar{i}}|e_{i}(u_{V_{\alpha}V_{1}})|^{2}}{\lambda_{1}(\lambda_{1}-\lambda_{\alpha})}
				-\frac{1}{\lambda_{1}}G^{i\bar{k},j\bar{l}}V_{1}(\tilde{g}_{i\bar{k}})V_{1}(\tilde{g}_{j\bar{l}})\\
				&+(2-\varepsilon)\varphi'\sum_{\alpha,\beta}G^{i\bar{i}}|e_{i}(u_{\alpha\beta})|^{2}
				-(1+\ve)G^{i\bar{i}}\frac{|(\lambda_{1})_{i}|^{2}}{\lambda_{1}^{2}} +\varphi''G^{i\bar{i}}|(|\rho|^{2})_{i}|^{2}\\
				& +\frac{3\psi'}{4}\sum_{j} G^{i\bar{i}}(|e_{i}e_{j}u|^{2}+|e_{i}\bar{e}_{j}u|^{2})
				+\psi'' G^{i\bar{i}}|(|\de u|^{2})_{i}|^{2}\\
				& -Ae^{-Au}L(u)+A^{2}e^{-Au} G^{i\bar{i}}|u_{i}|^{2}
				-\frac{C}{\ve}(1+\varphi'N^{2}+\psi'K)\mathcal{G}.
			\end{split}
		\end{equation*}
		It suffices to deal with the third and last term. For the third term, using \eqref{xieta} and the fact $N\leq C_A \lambda_1$,
		\[
		(2-\varepsilon)\varphi'\sum_{\alpha,\beta}G^{i\bar{i}}|e_{i}(u_{\alpha\beta})|^{2}
		\geq \sum_{\alpha,\beta}\frac{G^{i\ov{i}}|e_{i}(u_{\alpha\beta})|^{2}}{20N^{2}}
		\geq \sum_{\alpha,\beta}\frac{G^{i\ov{i}}|e_{i}(u_{\alpha\beta})|^{2}}{C_{A}\lambda_{1}^{2}}.
		\]
		For the last term, using \eqref{xieta} again we infer that
		\[
		-\frac{C}{\ve}(1+\varphi'N^{2}+\psi'K)\mathcal{G}
		\geq -\frac{C}{\ve}\mathcal{G}.
		\]
		Combining the above inequalities, we conclude Proposition \ref{lower bound of L Q}.
	\end{proof}

	\subsubsection{Proof of Theorem \ref{Thm4.1}}
	First, we define the index set
	\begin{equation*}
		J=\Big\{ 1\leq j\leq n \ :  \ \frac{\psi'}{2}\sum_{i} (|e_{i}e_{j}u|^{2}+|e_{i}\bar{e}_{j}u|^{2})\geq A^{5n}e^{-5nu}K \ \,  \text{at $x_{0}$}\Big\}.
	\end{equation*}
	If $J=\emptyset$, then  Theorem \ref{Thm4.1} follows. So we assume $J\neq\emptyset$ and let $j_0$ be the maximal element of $J$. If $j_{0}<n$, we denote
	\begin{equation}\label{def of S}
		S=\Big\{j_{0}\leq i\leq n-1 \ : \
		G^{i\bar{i}} \leq A^{-2}e^{2Au}G^{i+1\overline{i+1}} \ \,  \text{at $x_{0}$}\Big\}.
	\end{equation}
	According to the index sets $J$ and $S$, the proof of Theorem \ref{Thm4.1} can be divided into three cases:
	\begin{itemize}
	\itemsep0.2em
	    \item[Case 1.] $j_{0}=n$.
	    \vspace{0.1cm}
	    \item[Case 2.] $j_{0}<n$ and $S=\emptyset$.
	    \vspace{0.1cm}
	    \item[Case 3.] $j_{0}<n$ and $S\neq\emptyset$.
	\end{itemize}
	
	\medskip
	For Case 1 and Case 2, the proof in \cite{CHZ21} is still valid in our setting, we shall omit it here.
	Now we only need to establish Case 3.

	\medskip
	Observe that $S\neq\emptyset$. Let $i_{0}$ be the minimal element of $S$ and define
	\begin{equation*}
		I=\{i_{0}+1,\cdots, n\}.
	\end{equation*}
Let us decompose the term \begin{equation}\label{definition Bi}
		\begin{split}
			 (1&+\ve)\sum_{i}G^{i\bar{i}}\frac{|(\lambda_{1})_{i}|^{2}}{\lambda_{1}^{2}}\\
			& = (1+\ve)\sum_{i\not\in I}G^{i\bar{i}}\frac{|(\lambda_{1})_{i}|^{2}}{\lambda_{1}^{2}}
			+3\ve\sum_{i\in I}G^{i\bar{i}}\frac{|(\lambda_{1})_{i}|^{2}}{\lambda_{1}^{2}}+(1-2\ve)\sum_{i\in I}G^{i\bar{i}}\frac{|(\lambda_{1})_{i}|^{2}}{\lambda_{1}^{2}}\\[2mm]
			& = B_{1}+B_{2}+B_{3}
		\end{split}
	\end{equation}
	into three terms based on $I$.

	%
	
	\begin{lemma}\label{bad terms 1 2}
		At $x_{0}$, we have
		\begin{equation*}
			\begin{split}
				B_1+B_2
				\leq {} & \frac{\psi'}{4}\sum_{j}G^{i\bar{i}} (|e_{i}e_{j}u|^{2}+|e_{i}\bar{e}_{j}u|^{2})+\varphi''G^{i\bar{i}}|(|\rho|^{2})_{i}|^{2}\\&+\psi''G^{i\bar{i}}|(|\de u|^{2})_{i}|^{2}+9\ve A^{2}e^{-2Au}G^{i\bar{i}}|u_{i}|^{2}.
			\end{split}
		\end{equation*}
	\end{lemma}
	
	\begin{proof}
		See the proof of \cite[Lemma 4.6]{CHZ21}.
	\end{proof}

	\subsubsection{Calculations of $B_3$}
We now devote to prove the following proposition.
	\begin{proposition}\label{B 3}
		Let $\ve=\frac{e^{Au(x_{0})}}{9}$. Then at $x_{0}$, we have
		\begin{equation}
			\begin{split}
				B_3\leq  &(2-\ve)
				\sum_{\alpha>1}\frac{G^{i\bar{i}}|e_{i}(u_{V_{\alpha}V_{1}})|^{2}}{\lambda_{1}(\lambda_{1}-\lambda_{\alpha})}
				-\frac{1}{\lambda_{1}}G^{i\bar{k},j\bar{l}}V_{1}(\tilde{g}_{i\bar{k}})V_{1}(\tilde{g}_{j\bar{l}})\\
				&+(2-\varepsilon)\varphi'\sum_{\alpha,\beta}G^{i\bar{i}}|e_{i}(u_{\alpha\beta})|^{2}+\frac{C}{\ve}\mathcal{G}.
			\end{split}	
		\end{equation}
	\end{proposition}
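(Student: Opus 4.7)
The strategy is to combine the lower bound for $L(Q)$ from Proposition~\ref{lower bound of L Q} with the absorption estimate of Lemma~\ref{bad terms 1 2}, with the choice $\varepsilon = e^{Au(x_0)}/9$ engineered so that the gradient-squared contributions match and cancel. Specifically, the identity $9\varepsilon A^2 e^{-2Au(x_0)} = A^2 e^{-Au(x_0)}$ is exactly what is needed to pair the residual term in Lemma~\ref{bad terms 1 2} with the $A^2 e^{-Au} F^{i\bar{i}}|u_i|^2$ term in Proposition~\ref{lower bound of L Q}.

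First, I would split the ``bad'' term appearing in Proposition~\ref{lower bound of L Q} as
\[
(1+\varepsilon) F^{i\bar{i}} \frac{|(\lambda_1)_i|^2}{\lambda_1^2}
= \Bigl[(1+\varepsilon)\sum_{i\notin I} + 3\varepsilon \sum_{i\in I}\Bigr] F^{i\bar{i}} \frac{|(\lambda_1)_i|^2}{\lambda_1^2}
+ (1-2\varepsilon) \sum_{i\in I} F^{i\bar{i}} \frac{|(\lambda_1)_i|^2}{\lambda_1^2},
\]
and apply Lemma~\ref{bad terms 1 2} to bound the first bracket by $\xi'' F^{i\bar{i}}|(|\rho|^2)_i|^2 + \eta'' F^{i\bar{i}}|(|\partial u|^2)_i|^2 + \tfrac{\eta'}{4}\sum_j F^{i\bar{i}}(|e_ie_ju|^2+|e_i\bar{e}_ju|^2) + A^2 e^{-Au} F^{i\bar{i}}|u_i|^2$, where the last coefficient comes from the choice of $\varepsilon$.

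Substituting this into Proposition~\ref{lower bound of L Q}, the $\xi''F^{i\bar{i}}|(|\rho|^2)_i|^2$, $\eta'' F^{i\bar{i}}|(|\partial u|^2)_i|^2$, and $A^2 e^{-Au} F^{i\bar{i}}|u_i|^2$ contributions cancel against the matching positive terms in the Proposition, while the $\tfrac{\eta'}{4}\sum$ reduces $\tfrac{3\eta'}{4}\sum$ to $\tfrac{\eta'}{2}\sum$. After these cancellations, the residual $-Ae^{-Au}L(u)$ term must still be controlled; using $L(u) = F^{i\bar{i}}(\tilde g_{i\bar{i}} - g_{i\bar{i}})$ from the definition of $\omega_u$, together with a Cauchy--Schwarz argument in the spirit of the proof of Lemma~\ref{lower bound of L de u} (using $|e_i\bar{e}_iu|^2 \le \sum_j(|e_ie_ju|^2+|e_i\bar{e}_ju|^2)$), one obtains $|Ae^{-Au}L(u)| \le \tfrac{\eta'}{2}\sum_j F^{i\bar{i}}(|e_ie_ju|^2+|e_i\bar{e}_ju|^2) + \tfrac{C}{\varepsilon}\mathcal F$, which precisely absorbs the remaining $\tfrac{\eta'}{2}\sum$ term. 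Rearranging the resulting inequality isolates $(1-2\varepsilon)\sum_{i\in I} F^{i\bar{i}}|u_{V_1V_1 i}|^2/\lambda_1^2$ on one side and the $T_1$, $T_2$, $T_3$ plus $\tfrac{C}{\varepsilon}\mathcal F$ terms on the other.

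The main obstacle I anticipate is the delicate bookkeeping of cancellations and coefficients. In particular, the choice $\varepsilon = e^{Au(x_0)}/9$ is forced: any other value leaves an uncontrolled residual proportional to $A^2 e^{-Au}F^{i\bar{i}}|u_i|^2$ which cannot be absorbed into $\tfrac{C}{\varepsilon}\mathcal F$ for small $\varepsilon$. A related subtle point is that the Cauchy--Schwarz bound on $Ae^{-Au}L(u)$ must produce the precise coefficient $\tfrac{\eta'}{2}$ matching the residual quadratic-gradient term, using that $\tilde g_{i\bar{i}} - g_{i\bar{i}}$ decomposes into the ``good'' second derivatives $e_i\bar{e}_i u$ plus manifestly lower-order $(1,0)$-gradient pieces controlled by $\sqrt{K}$.
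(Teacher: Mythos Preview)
Your approach has a fundamental sign error: it produces the inequality in the wrong direction. Starting from $0\ge L(Q)$ together with Proposition~\ref{lower bound of L Q} and Lemma~\ref{bad terms 1 2}, after your splitting and cancellations you obtain
\[
(1-2\ve)\sum_{i\in I}\frac{F^{i\bar i}|(\lambda_1)_i|^2}{\lambda_1^2}
\;\ge\; T_1+T_2+T_3+\frac{\eta'}{2}\sum_{i,j}F^{i\bar i}(|e_ie_ju|^2+|e_i\bar e_ju|^2)-Ae^{-Au}L(u)-\frac{C}{\ve}\mathcal F,
\]
a \emph{lower} bound on the left-hand side, not the upper bound asserted in Proposition~\ref{B 3}. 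No amount of absorbing $Ae^{-Au}L(u)$ will reverse the direction. In fact, in the paper Proposition~\ref{B 3} is precisely the extra ingredient that is combined \emph{with} Proposition~\ref{lower bound of L Q} and Lemma~\ref{bad terms 1 2} in the proof of Case~3; deriving it from those two via $0\ge L(Q)$ would be circular.

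The point you are missing is that Proposition~\ref{B 3} is a pointwise algebraic estimate, independent of the maximum principle: one must show directly that for $i\in I$ the third-order quantity $e_i(u_{V_1V_1})$ can be rewritten in terms of the ``good'' third-order pieces. The paper does this by writing $V_1=\sqrt{2}\,\ov{W}_1-\sqrt{-1}JV_1$ with $W_1=\sum_q\nu_q e_q$, so that
\[
e_i(u_{V_1V_1})=-\sqrt{-1}\sum_{\alpha>1}\mu_\alpha\, e_i(u_{V_1V_\alpha})+\sqrt{2}\sum_{q\notin I}\ov{\nu_q}\,V_1(\ti g_{i\bar q})+\sqrt{2}\sum_{q\in I}\ov{\nu_q}\,V_1 e_i\bar e_q u+O(\lambda_1),
\]
and then applies Cauchy--Schwarz with a free parameter $\gamma$. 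The $q\in I$ sum is controlled by Lemma~\ref{nu} (the smallness $|\nu_q|\le C_AK/\lambda_1$), the $q\notin I$ sum is matched to the concavity term $-F^{i\bar k,j\bar l}V_1(\ti g_{i\bar k})V_1(\ti g_{j\bar l})/\lambda_1$, and the $\mu_\alpha$ sum to $(2-\ve)\sum_{\alpha>1}F^{i\bar i}|e_i(u_{V_1V_\alpha})|^2/(\lambda_1(\lambda_1-\lambda_\alpha))$. The choice $\ve=e^{Au(x_0)}/9$ enters not to cancel gradient terms but to guarantee that the two resulting coefficients \eqref{coeffi} and \eqref{coeff2} are each $\le 1$, via $(1-\ve)/(1-A^{-2}e^{2Au})\le 1-\ve/2$; this requires a case split depending on the sign of $\frac12(\lambda_1+\sum_{\alpha>1}\lambda_\alpha\mu_\alpha^2)-(1-\frac{\ve}{2})(\sum_{q\notin I}\ti g_{q\bar q}|\nu_q|^2+C_AK)$ and a suitable choice of $\gamma$ in each case.
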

Let us define
	\begin{equation}\label{definition of W}
		W_{1} = \frac{1}{\sqrt{2}}(V_{1}-\sqrt{-1}JV_{1}) = \sum_{q}\nu_{q}e_{q}, \qquad
		JV_{1} = \sum_{\alpha>1}\mu_{\alpha}V_{\alpha},
	\end{equation}
	where we used $V_1$ is orthogonal to $JV_1$. At $x_{0}$, $V_{1}$ and $e_{q}$ are $\chi$-unitary, which implies
	\[
	\sum_{q=1}^{n}|\nu_{q}|^{2} = 1, \qquad
	\sum_{\alpha>1}\mu_{\alpha}^{2} = 1.
	\]

	\begin{lemma}\label{nu}
		At $x_{0}$, we have
		\begin{itemize}
		\itemsep0.2em
		\item[(1)] $\omega_{u}\geq -C_{A}K\chi \,,$
		\vspace{0.1cm}
		\item[(2)] $	|\nu_{i}|\leq \frac{C_{A}K}{\lambda_{1}} $ for any $i\in I\,.$
		\end{itemize}
	\end{lemma} 
	
	\begin{proof}
	Recalling the definitions of $i_0$ and $j_0$, we deduce $i_{0}+1>i_0\geq j_0$ and hence $I\cap J=\emptyset$. Therefore,
		\begin{equation}\label{nu eqn 1}
			\frac{\psi'}{4}\sum_{j} (|e_{i}e_{j}u|^{2}+|e_{i}\bar{e}_{j}u|^{2}) \leq A^{5n}e^{-5Anu}K,
			\quad \text{for each $i\in I$}.
		\end{equation}
		Furthermore,  $n\in I$  implies $e_{n}\ov{e}_{n}u\geq-C_{A}K$ and 
		\[
		\ti{g}_{n\ov{n}} = g_{n\ov{n}}+e_{n}\ov{e}_{n}u+[e_{n},\ov{e}_{n}]^{(0,1)}u+Z_{n\bar n}
		\geq e_{n}\ov{e}_{n}u-CK \geq -C_{A}K.
		\]
		Using this together with \eqref{5..6}, we conclude (1). The proof of (2) can be found in \cite[Lemma 4.8]{CHZ21}.
	\end{proof}
	Now we give the proof of Proposition \ref{B 3}.
	\begin{proof}[Proof of Proposition \ref{B 3}]
		By the definition of $W_{1}$ in \eqref{definition of W}, we see that $V_{1}=\sqrt{2}\overline{W}_{1}-\sqrt{-1}JV_{1}$. This implies
		\[
		\begin{split}
			e_{i}(u_{V_1V_1})
			= {} & -\sqrt{-1}\sum_{\alpha>1}\mu_\alpha e_{i}(u_{V_1 V_\alpha})
			+\sqrt{2}\sum_{q}\ov{\nu_q}V_{1}e_{i}\ov{e}_{q}u+O(\lambda_1) \\
			= {} & -\sqrt{-1}\sum_{\alpha>1}\mu_\alpha e_{i}(u_{V_1 V_\alpha})
			+\sqrt{2}\sum_{q\notin I}\ov{\nu_q}V_1(\tilde{g}_{i\ov{q}})
			+\sqrt{2}\sum_{q\in I}\ov{\nu_q}V_{1}e_{i}\ov{e}_{q}u+O(\lambda_1).
		\end{split}
		\]
		Using this together with Cauchy-Schwarz inequality and Lemma \ref{nu},
		\begin{equation}\label{E1}
			\begin{split}
				B_3				\leq  (1-\ve)&\sum_{i\in I}\frac{G^{i\ov{i}}}{\lambda_1^2}\left|-\sqrt{-1}\sum_{\alpha>1}\mu_\alpha e_{i}(u_{V_1 V_\alpha})+\sqrt{2}\sum_{q\notin  I}\ov{\nu_q}V_{1}(\tilde{g}_{i\ov{q}})\right|^{2} \\
				& +\frac{C_{A}}{\ve\lambda_1^2}\sum_{i\in I}\sum_{q\in I}\frac{G^{i\ov{i}}|V_{1}e_{i}\ov{e}_{q}u|^{2}}{\lambda_1^2}+\frac{C\mathcal{G}}{\ve}.\\
			\end{split}
		\end{equation}
For the second term in RHS of \eqref{E1}.		Observing that $|V_{1}e_{i}\ov{e}_{q}u| \leq C\sum_{\alpha,\beta}|e_{i}(u_{\alpha\beta})|+C\lambda_{1}$, we deduce
		\[
		\frac{C_A}{\ve\lambda_1^2}\sum_{i\in I}\sum_{q\in I}\frac{G^{i\ov{i}}|V_{1}e_{i}\ov{e}_{q}u|^{2}}{\lambda_1^2}
		\leq \frac{C_A}{\ve\lambda_1^2}\sum_{\alpha,\beta}\frac{G^{i\ov{i}}|e_{i}(u_{\alpha\beta})|^{2}}{\lambda_{1}^{2}}
		+\frac{C_A}{\ve\lambda_1^2}\mathcal{G}.
		\]
		Under the assumption $\lambda_{1}\geq\frac{C_{A}}{\ve}$, we obtain
		\begin{equation}\label{B33}
			\frac{C_A}{\ve\lambda_1^2}\sum_{i\in I}\sum_{q\in I}\frac{G^{i\ov{i}}|V_{1}e_{i}\ov{e}_{q}u|^{2}}{\lambda_1^2} \leq
			\sum_{\alpha,\beta}\frac{G^{i\ov{i}}|e_{i}(u_{\alpha\beta})|^{2}}{C_{A}\lambda_{1}^{2}}+\mathcal{G}.
		\end{equation}
		Now we deal with the first term in RHS of \eqref{E1}. For a constant $\gamma>0$ to be chosen later, we see that
		\begin{equation}\label{E2}
			\begin{split}
				\sum_{i\in I}\frac{G^{i\ov{i}}}{\lambda_1^2}&\Big|-\sqrt{-1}\sum_{\alpha>1}\mu_\alpha e_{i}(u_{V_1 V_\alpha})+\sqrt{2}\sum_{q\not\in I}\ov{\nu_q}V_{1}(\tilde{g}_{i\ov{q}})\Big|^{2}\\
				\leq {} & \Big(1+\frac{1}{\gamma}\Big)\sum_{i\in I}\frac{G^{i\ov{i}}}{\lambda_1^2}
				\Big|\sum_{\alpha>1}\mu_\alpha e_{i}(u_{V_1 V_\alpha})\Big|^{2}  +(1+\gamma)\sum_{i\in I}\frac{2G^{i\ov{i}}}{\lambda_{1}^{2}}\Big|\sum_{q\notin I}\ov{\nu_q}V_1(\tilde{g}_{i\ov{q}})\Big|^{2}.
			\end{split}
		\end{equation}
	 Using the Cauchy-Schwarz inequality again, 	for the first term,
		\begin{equation}\label{B31}
			\begin{split}
			\Big(1+\frac{1}{\gamma}\Big)\sum_{i\in I}&\frac{G^{i\ov{i}}}{\lambda_1^2}
				\Big|\sum_{\alpha>1}\mu_\alpha e_{i}(u_{V_1 V_\alpha})\Big|^{2}\\
				\leq {} & \Big(1+\frac{1}{\gamma}\Big)
				\sum_{i\in I}\frac{G^{i\ov{i}}}{\lambda_1^2}\Big(\sum_{\alpha>1}(\lambda_{1}-\lambda_{\alpha})\mu_{\alpha}^{2}\Big)
				\Big(\sum_{\alpha>1}\frac{|e_{i}(u_{V_{1}V_{\alpha}})|^{2}}{\lambda_1-\lambda_\alpha}\Big) \\
				= {} & \Big(1+\frac{1}{\gamma}\Big)\sum_{i\in I}\frac{G^{i\ov{i}}}{\lambda_1^2}\Big(\lambda_{1}-\sum_{\alpha>1}\lambda_{\alpha}\mu_{\alpha}^{2}\Big)
	           \Big(\sum_{\alpha>1}\frac{|e_{i}(u_{V_{1}V_{\alpha}})|^{2}}{\lambda_1-\lambda_\alpha}\Big)\,, \\
			\end{split}
		\end{equation}
	and	for the second term, 
		\[
		\begin{split}
				(1+\gamma)\sum_{i\in I}&\frac{2G^{i\ov{i}}}{\lambda_{1}^{2}}\Big|\sum_{q\notin I}\ov{\nu_q}V_1(\tilde{g}_{i\ov{q}})\Big|^{2} \\
			\leq {} & (1+\gamma)\sum_{i\in I}\frac{2G^{i\ov{i}}}{\lambda_{1}^{2}}
			\Big(\sum_{q\notin I}
			\frac{(\tilde{g}_{q\ov{q}}-\tilde{g}_{i\ov{i}})|\nu_{q}|^{2}}{G^{i\ov{i}}-G^{q\ov{q}}}\Big)
			\Big(\sum_{q\notin I}
			\frac{(G^{i\ov{i}}-G^{q\ov{q}})|V_{1}(\tilde{g}_{i\ov{q}})|^{2}}{\tilde{g}_{q\ov{q}}-\tilde{g}_{i\ov{i}}}\Big).
		\end{split}
		\]
		Recalling the definition of the index set $I$, when  $q\notin I$ and $i\in I$,
		\[
		G^{q\ov{q}} \leq G^{i_0\ov{i_0}} \leq A^{-2}e^{2Au}G^{i_0+1\ov{i_0+1}} \leq A^{-2}e^{2Au}G^{i\ov{i}}.
		\]
		Combining this with Lemma \ref{nu},
		\begin{equation}\label{positive constant}
			0 <
			\frac{(\tilde{g}_{q\ov{q}}-\tilde{g}_{i\ov{i}})|\nu_{q}|^{2}}{G^{i\ov{i}}-G^{q\ov{q}}}
			\leq \frac{\tilde{g}_{q\ov{q}}|\nu_{q}|^{2}-\tilde{g}_{i\ov{i}}|\nu_{q}|^{2}}{(1-A^{-2}e^{2Au})G^{i\ov{i}}}
			< \frac{\tilde{g}_{q\ov{q}}|\nu_{q}|^{2}+C_{A}K}{(1-A^{-2}e^{2Au})G^{i\ov{i}}}.
		\end{equation}
In addition, from  \eqref{second derive of F} and the concavity of $f$, we get
		\begin{equation}\label{second sum}
		   -\frac{1}{\lambda_{1}} G^{i\bar{k},j\bar{l}}V_{1}(\tilde{g}_{i\bar{k}})V_{1}(\tilde{g}_{j\bar{l}})
		\geq \frac{2}{\lambda_{1}}\sum_{i\in I}\sum_{q\notin I}\frac{(G^{i\ov{i}}-G^{q\ov{q}})|V_{1}(\tilde{g}_{i\ov{q}})|^{2}}{\tilde{g}_{q\ov{q}}-\tilde{g}_{i\ov{i}}}. 
		\end{equation}
	It follows from \eqref{positive constant} and \eqref{second sum} that
		\begin{equation}\label{E3}
			\begin{split}
					(1+\gamma)\sum_{i\in I}&\frac{2G^{i\ov{i}}}{\lambda_{1}^{2}}\Big|\sum_{q\notin I}\ov{\nu_q}V_1(\tilde{g}_{i\ov{q}})\Big|^{2} \\
				\leq {} & \frac{(1+\gamma )}{\lambda_{1}(1-A^{-2}e^{2Au})}
				\Big(\sum_{q\notin I}\tilde{g}_{q\ov{q}}|\nu_{q}|^{2}+C_{A}K\Big)\cdot \Big\{-\frac{1}{\lambda_{1}} G^{i\bar{k},j\bar{l}}V_{1}(\tilde{g}_{i\bar{k}})V_{1}(\tilde{g}_{j\bar{l}})\Big\}.
			\end{split}
		\end{equation}
		Since $\ve=\frac{e^{Au(x_{0})}}{9}$, when $A$ is large enough one have
		\begin{equation}\label{E4}
			\frac{(1-\ve)(1+\gamma)}{\lambda_{1}(1-A^{-2}e^{2Au})}
			\leq \Big(1-\frac{\ve}{2}\Big)\Big(\frac{1+\gamma}{\lambda_{1}}\Big).
		\end{equation}
		Together with \eqref{E2}, \eqref{B31}, \eqref{E3} and \eqref{E4}, we conclude
		\begin{equation}\label{E5}
			\begin{split}
					(1-\ve)\sum_{i\in I}&\frac{G^{i\ov{i}}}{\lambda_1^2}
				\Big|-\sqrt{-1}\sum_{\alpha>1}\mu_\alpha e_{i}(u_{V_1 V_\alpha})+\sqrt{2}\sum_{q\notin  I}\ov{\nu_q}V_{1}(\tilde{g}_{i\ov{q}})\Big|^{2} \\
				\leq {} & (1-\ve)\Big(1+\frac{1}{\gamma}\Big)\sum_{i\in I}\frac{G^{i\ov{i}}}{\lambda_1^2}\Big(\lambda_{1}-\sum_{\alpha>1}\lambda_{\alpha}\mu_{\alpha}^{2}\Big)
				\Big(\sum_{\alpha>1}\frac{|e_{i}(u_{V_{1}V_{\alpha}})|^{2}}{\lambda_1-\lambda_\alpha}\Big)\\
				& +\frac{(1-\ve)(1+\gamma )}{\lambda_{1}(1-A^{-2}e^{2Au})}
				\Big(\sum_{q\notin I}\tilde{g}_{q\ov{q}}|\nu_{q}|^{2}+C_{A}K\Big)\cdot \Big\{-\frac{1}{\lambda_{1}} G^{i\bar{k},j\bar{l}}V_{1}(\tilde{g}_{i\bar{k}})V_{1}(\tilde{g}_{j\bar{l}})\Big\}\\
				\leq {} & \frac{1-\ve}{(2-\ve)\lambda_{1}}
				\Big(1+\frac{1}{\gamma}\Big)\Big(\lambda_{1}-\sum_{\alpha>1}\lambda_{\alpha}\mu_{\alpha}^{2}\Big)\cdot\Big\{(2-\ve)\sum_{\alpha>1}\frac{G^{i\ov{i}}|u_{V_{1}V_{\alpha}i}|^2}{\lambda_{1}(\lambda_{1}-\lambda_{\alpha})}\Big\}\\
				& +\left(1-\frac{\ve}{2}\right)\left(\frac{1+\gamma}{\lambda_{1}}\right)
				\Big(\sum_{q\notin I}\tilde{g}_{q\ov{q}}|\nu_{q}|^{2}+C_{A}K\Big)\cdot \Big\{-\frac{1}{\lambda_{1}} G^{i\bar{k},j\bar{l}}V_{1}(\tilde{g}_{i\bar{k}})V_{1}(\tilde{g}_{j\bar{l}})\Big\}.\\
			\end{split}
		\end{equation}
		Now we prove the  following lemma:
		\begin{lemma}
			At $x_{0}$, we have
			\begin{equation}\label{bad}
				\begin{split}
						(1-\ve)\sum_{i\in I}&\frac{G^{i\ov{i}}}{\lambda_1^2}
					\Big|-\sqrt{-1}\sum_{\alpha>1}\mu_\alpha e_{i}(u_{V_1 V_\alpha})+\sqrt{2}\sum_{q\notin  I}\ov{\nu_q}V_{1}(\tilde{g}_{i\ov{q}})\Big|^{2} \\
					\leq {} & (2-\ve)\sum_{\alpha>1}\frac{G^{i\ov{i}}|u_{V_{1}V_{\alpha}i}|^2}{\lambda_{1}(\lambda_{1}-\lambda_{\alpha})}-\frac{1}{\lambda_{1}} G^{i\bar{k},j\bar{l}}V_{1}(\tilde{g}_{i\bar{k}})V_{1}(\tilde{g}_{j\bar{l}}).
				\end{split}
			\end{equation}
		\end{lemma}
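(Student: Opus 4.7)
The plan is to match \eqref{E5} against the goal \eqref{bad}. Both summands on the right of \eqref{E5} are products of a nonnegative curvature quantity (from either concavity, or the Cauchy–Schwarz denominator structure) with a scalar coefficient, so it suffices to find $\gamma>0$ making each coefficient $\leq 1$. Abbreviating $A':=\lambda_1-\sum_{\alpha>1}\lambda_\alpha\mu_\alpha^2$ and $B:=\sum_{q\notin I}\tilde g_{q\bar q}|\nu_q|^2+C_AK$, the task reduces to choosing $\gamma>0$ satisfying
\[
(1-\ve)(1+\gamma^{-1})A'\leq (2-\ve)\lambda_1 \quad\text{and}\quad (1-\ve/2)(1+\gamma)B\leq \lambda_1.
\]
Cross-multiplying, such a $\gamma$ exists iff the single scalar inequality $(2-\ve)(1-\ve/2)B+(1-\ve)A'\leq (2-\ve)\lambda_1$ holds.

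The key input is the \emph{balance identity} $2B+A'=2\lambda_1+O(C_AK)$. To establish it I compute the Hermitian form $\sum_{i,j}\tilde g_{i\bar j}\nu_i\overline{\nu_j}$ at $x_0$ in two ways. By the diagonalization \eqref{5..6} it equals $\sum_q\mu_q|\nu_q|^2$. On the other hand, inserting the formula for $\tilde g_{i\bar j}$ from Section~2 and using $W_1=\sum_q\nu_q e_q=\tfrac{1}{\sqrt 2}(V_1-\sqrt{-1}JV_1)$, the principal second-order piece is $W_1\overline{W_1}u$, while the brackets $[e_i,\bar e_j]^{(0,1)}u$, the $Z(\partial u)$-corrections, and the tensorial correction $(\nabla_{V_1}V_1)u$ each contribute only $O(\sqrt K)$. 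Taking real parts and using
\[
2\,\mathrm{Re}\,\bigl(W_1\overline{W_1}u\bigr)=V_1V_1u+JV_1(JV_1)u+O(\sqrt K)=u_{V_1V_1}+u_{JV_1JV_1}+O(\sqrt K),
\]
together with $u_{JV_1JV_1}=\sum_{\alpha>1}\mu_\alpha^2\lambda_\alpha$ (since $JV_1=\sum_{\alpha>1}\mu_\alpha V_\alpha$), one obtains $2\sum_q\mu_q|\nu_q|^2+A'=2\lambda_1+O(\sqrt K)$.

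To pass from $\sum_q$ to $\sum_{q\notin I}$, I invoke Lemma \ref{nu}: for $q\in I$ one has $|\nu_q|\leq C_AK/\lambda_1$; moreover $q\notin J$ by the definition of $I$, and the negation of the $J$-condition forces $|e_q\bar e_q u|\leq C_AK$ and hence $|\mu_q|\leq C_AK$. Under the standing assumption $\lambda_1\geq C_AK$ in convention (iv), the tail sum $\sum_{q\in I}\mu_q|\nu_q|^2$ is $O(C_AK)$, which upgrades the identity to $2B+A'=2\lambda_1+O(C_AK)$. Together with the easy bound $B\leq \lambda_1+O(C_AK)$ (from $u_{JV_1JV_1}\leq \lambda_1$), a direct expansion of $(2-\ve)(1-\ve/2)B+(1-\ve)A'$ and absorption of the $O(C_AK)$ error into $\ve\lambda_1$ gives precisely the needed scalar inequality. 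A concrete choice is $\gamma=A'/(2\lambda_1-A')$: then the first coefficient becomes $2(1-\ve)/(2-\ve)<1$ automatically, and the second becomes $2(1-\ve/2)B/(2\lambda_1-A')\leq 1$ by the balance identity.

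The main obstacle is the bookkeeping in the balance identity: one must verify that each of the three kinds of correction terms—the Lie bracket $[e_i,\bar e_j]^{(0,1)}u$, the $Z(\partial u)$-contribution, and the difference $V_1V_1u-u_{V_1V_1}=(\nabla_{V_1}V_1)u$—is only $O(\sqrt K)$ and never hides a factor of $\lambda_1$, so that the total error can be absorbed by the $\ve\lambda_1$ slack (after dividing by the $A$-dependent but otherwise fixed constant $\ve$). The rest of the argument is algebraic manipulation that is forced once the balance identity is in hand.
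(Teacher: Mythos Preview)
Your approach is correct and in fact more streamlined than the paper's. Both proofs rest on the same key computation---what you call the balance identity $2B+A'=2\lambda_1+O(C_AK)$, which in the paper appears (one-sidedly) as inequality \eqref{case 2 inequality}. The difference is organizational: the paper splits into two cases according to the sign of $\tfrac12(\lambda_1+\sum_{\alpha>1}\lambda_\alpha\mu_\alpha^2)-(1-\tfrac{\ve}{2})B$; in Case (i) it takes $\gamma=A'/(2\lambda_1-A')$, while in Case (ii) it takes $\gamma=\ve^{-2}$ and uses the balance identity together with the standing assumption $\lambda_1\geq C_AK/\ve^{3}$ to force both coefficient bounds. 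Your observation that the two coefficient conditions \eqref{coeffi}, \eqref{coeff2} are simultaneously solvable in $\gamma>0$ if and only if the single scalar inequality $(2-\ve)(1-\tfrac{\ve}{2})B+(1-\ve)A'\leq(2-\ve)\lambda_1$ holds collapses the case split; and your derivation of that inequality from the balance identity, exhibiting slack of order $\ve\lambda_1$ to absorb the $O(C_AK)$ error, is valid.

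One caveat: your ``concrete choice'' $\gamma=A'/(2\lambda_1-A')$ is exactly the paper's Case (i) choice, and it may fail when $2\lambda_1-A'=\lambda_1+\sum_{\alpha>1}\lambda_\alpha\mu_\alpha^2\leq 0$ (the balance identity only gives $2\lambda_1-A'=2B+O(C_AK)$, which need not be positive). This is precisely the regime the paper's Case (ii) handles with $\gamma=\ve^{-2}$. Since your existence argument via the scalar inequality already settles the matter, the concrete choice is redundant; you can simply drop it, or else note that whenever $2\lambda_1-A'\leq 0$ the balance identity forces $B=O(C_AK)$ and then any moderately large $\gamma$ (say $\gamma=\ve^{-2}$) works.
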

		\begin{proof} In light of \eqref{E5}, it suffices to prove
		\begin{itemize}
		 \itemsep0.2em
		    \item[a)] $	\frac{1-\ve}{(2-\ve)\lambda_{1}}
		\big(1+\frac{1}{\gamma}\big)\big(\lambda_{1}-\sum_{\alpha>1}\lambda_{\alpha}\mu_{\alpha}^{2}\big)\leq 1.$
		    \vspace{0.1cm}
		    \item[b)] $\big(1-\frac{\ve}{2}\big)\big(\frac{1+\gamma}{\lambda_{1}}\big)
				\big(\sum_{q\notin I}\tilde{g}_{q\ov{q}}|\nu_{q}|^{2}+C_{A}K\big)\leq 1.$
		\end{itemize}
		
		\medskip
			We shall consider the following two cases:
			\bigskip
\begin{itemize}
    \item[Case A.] $\frac{1}{2}\big(\lambda_{1}+\sum_{\alpha>1}\lambda_{\alpha}\mu_{\alpha}^{2}\big)
			>\big(1-\frac{\ve}{2}\big)\big(\sum_{q\notin I}\tilde{g}_{q\ov{q}}|\nu_{q}|^{2}+C_{A}K\big)$. 
			
			\medskip
			It follows from \eqref{positive constant} that
			\[
			\frac{1}{2}\big(\lambda_{1}+\sum_{\alpha>1}\lambda_{\alpha}\mu_{\alpha}^{2}\big)
			>\big(1-\frac{\ve}{2}\big)\big(\sum_{q\notin I}\tilde{g}_{q\ov{q}}|\nu_{q}|^{2}+C_{A}K\big)
			\geq 0.
			\]
In this case we set $\gamma= \frac{\lambda_{1}-\sum_{\alpha>1}\lambda_{\alpha}\mu_{\alpha}^{2}}{\lambda_{1}+\sum_{\alpha>1}\lambda_{\alpha}\mu_{\alpha}^{2}}.$		Note that $\lambda_{1}>\lambda_{2}$ at $x_{0}$ and so $\gamma$ is positive. This concludes $a)$ and  $b)$.
\vspace{0.1cm}
   \item[Case B.] $\frac{1}{2}\left(\lambda_{1}+\sum_{\alpha>1}\lambda_{\alpha}\mu_{\alpha}^{2}\right)
			\leq(1-\frac{\ve}{2})\left(\sum_{q\notin I}\tilde{g}_{q\ov{q}}|\nu_{q}|^{2}+C_{A}K\right)$.
			
			\medskip
For a), by Lemma \ref{nu}, we deduce 
			\begin{equation}\label{case 2 inequality}
				\begin{split}
					\sum_{q\notin I}\tilde{g}_{q\ov{q}}|\nu_{q}|^{2}+C_{A}K
					\leq& \sum_{q}\tilde{g}_{q\ov{q}}|\nu_{q}|^{2}+C_{A}K 
					=\tilde{g}(W_1, \ov{W_{1}})+C_{A}K\\
					\leq& \frac{1}{2}\big(\lambda_1+\sum_{\alpha>1}\lambda_\alpha\mu_\alpha^2\big)+C_{A}K,
				\end{split}
			\end{equation}
			where we used \eqref{definition of W} in the last inequality. Combining this with the assumption of Case B, we see that
			\begin{equation}\label{case b inequality 1}
				\sum_{q\notin I}\tilde{g}_{q\ov{q}}|\nu_{q}|^{2}+C_{A}K \leq \frac{C_{A}K}{\ve}.
			\end{equation}
			Using Lemma \ref{nu} again and \eqref{case 2 inequality},
			\[
			\frac{1}{2}\Big(\lambda_1+\sum_{\alpha>1}\lambda_\alpha\mu_\alpha^2\Big)
			\geq \tilde{g}(W_1, \ov{W_{1}})-CK = \sum_{q}\tilde{g}_{q\ov{q}}|\nu_{q}|^{2}-CK \geq -C_{A}K,
			\]
		 which implies
			$
			0 < \lambda_{1}-\sum_{\alpha>1}\lambda_{\alpha}\mu_{\alpha}^{2}
			\leq 2\lambda_{1}+C_{A}K\leq (2+2\ve^{2})\lambda_{1}
			$
			under the  assumption $\lambda_{1}\geq\frac{C_{A}K}{\ve^{2}}$. Letting $\gamma=\ve^{-2}$, then
			\begin{equation*}\label{coeff1}
				\frac{1-\ve}{(2-\ve)\lambda_{1}}
				\Big(1+\frac{1}{\gamma}\Big)\Big(\lambda_{1}-\sum_{\alpha>1}\lambda_{\alpha}\mu_{\alpha}^{2}\Big)\leq \frac{2-2\ve}{2-\ve}(1+\ve^{2})^{2}.
			\end{equation*}
			Since $\ve=\frac{e^{Au(x_{0})}}{9}$, for a large $A$ we get
			$
			\frac{2-2\ve}{2-\ve}(1+\ve^{2})^{2} \leq 1.
			$
			This proves a). 
			
			\medskip
			For b), using \eqref{case b inequality 1} and $\gamma=\ve^{-2},$
			\begin{equation*}
				\Big(1-\frac{\ve}{2}\Big)\Big(\frac{1+\gamma}{\lambda_{1}}\Big)
				\Big(\sum_{q\notin I}\tilde{g}_{q\ov{q}}|\nu_{q}|^{2}+C_{A}K\Big)\leq \frac{C_{A}}{\ve^{3}\lambda_{1}}.
			\end{equation*}
			This proves b) provided by $\lambda_{1}\geq\frac{C_{A}}{\ve^{3}}$.
\end{itemize}
\end{proof}
		Consequently, the Proposition \ref{B 3} follows from   
		\eqref{E1}, \eqref{B33}  and \eqref{bad}.
	\end{proof}
	
	Now we are return to prove Case 3 of Theorem \ref{Thm4.1}.

	\begin{proof}[Proof of Case 3]
		Using Proposition \ref{lower bound of L Q} together with Lemma \ref{bad terms 1 2} and Proposition \ref{B 3}, we deduce
		\begin{equation*}
			\begin{split}
				0 \geq {} & (A^{2}e^{-Au}-9\ve A^{2}e^{-2Au}) G^{i\bar{i}}|u_{i}|^{2}-\frac{C}{\ve}\mathcal{G}\\
				&+ \frac{\psi'}{4}\sum_{j} G^{i\bar{i}}(|e_{i}e_{j}u|^{2}+|e_{i}\bar{e}_{j}u|^{2})
				-Ae^{-Au}L(u).
			\end{split}
		\end{equation*}
		Since $\ve=\frac{e^{Au(x_{0})}}{9}$, 
		\begin{equation}\label{proof of Thm4.1 eqn 1}
			\begin{split}
				0 \geq &-\frac{C}{\ve}\mathcal{G}+ \frac{\psi'}{4}\sum_{j} G^{i\bar{i}}(|e_{i}e_{j}u|^{2}+|e_{i}\bar{e}_{j}u|^{2})-Ae^{-Au}L(u).
			\end{split}
		\end{equation}
	Let $A=\frac{10C}{\theta}$, where $\theta$ is the constant given in Proposition \ref{prop subsolution}. There are two possibilities:
		
		\bigskip
		\noindent
		\begin{itemize}
			\item $-L(u)\geq \theta\mathcal{G}$. In this setting, \eqref{proof of Thm4.1 eqn 1} yields that
			\[
			0 \geq \Big(A\theta e^{-Au}-\frac{C}{\ve}\Big)\mathcal{G}+ \frac{\psi'}{4}\sum_{j} G^{i\bar{i}}(|e_{i}e_{j}u|^{2}+|e_{i}\bar{e}_{j}u|^{2}).
			\]
			Using the fact $A=\frac{10C}{\theta}$, we deduce
			\[
			A\theta e^{-Au}-\frac{C}{\ve}
			= A\theta e^{-Au}-9Ce^{-Au} = Ce^{-Au},
			\]
			which implies
			\[
			0\geq Ce^{-Au}\mathcal{G}+ \frac{\psi'}{4}\sum_{j} G^{i\bar{i}}(|e_{i}e_{j}u|^{2}+|e_{i}\bar{e}_{j}u|^{2})>0.
			\]
			This is impossible.
			
			\vspace{0.3cm}
			\item $G^{1\bar{1}}\geq \theta\mathcal{G}$. Using the Cauchy-Schwarz inequality,
			\begin{equation*}
				\begin{split}
					Ae^{-Au}L(u) = {} & Ae^{-Au}\sum_{i}G^{i\bar{i}}\big(e_{i}\ov{e}_{i}u-[e_{i},\ov{e}_{i}]^{(0,1)}u+e_{p}(u) Z^{p}_{i\bar i}+\bar{e}_{p}(u)\overline{Z^{p}_{i\bar i}}\big) \\
					\leq {} & Ae^{-Au}\mathcal{G}\sum_{i}|e_{i}\ov{e}_{i}u|+CAe^{-Au}K\mathcal{G} \\
					\leq {} & \frac{\theta\psi'}{8}\mathcal{G}\sum_{i}|e_{i}\ov{e}_{i}u|^{2}+C_{A}K\mathcal{G}.
				\end{split}
			\end{equation*}
			Plugging it into \eqref{proof of Thm4.1 eqn 1},
			\[
			\frac{\theta\psi'}{8}\mathcal{G}\sum_{i,j} (|e_{i}e_{j}u|^{2}+|e_{i}\bar{e}_{j}u|^{2})\leq C_{A} K\mathcal{G}
			\]
		and hence
			\[
			\sum_{i,j} (|e_{i}e_{j}u|^{2}+|e_{i}\bar{e}_{j}u|^{2})\leq C_{A} K^{2}.
			\]
		This yields $\lambda_{1}\leq C_{A}K$ and the proof is completely.
		\end{itemize}
	\end{proof}
	Now we give the proof of Theorem \ref{main estimate}.
	\begin{proof}
		Combining Proposition \ref{Prop3.2} and Theorem \ref{HMW}, we obtain Theorem \ref{main estimate}.
	\end{proof}
	
	\subsection{Higher order estimates}
	\begin{proposition}\label{main estimate 1}
		Let $(M,\chi,J)$ be a compact almost Hermitian manifold of real dimension $2n$. Suppose $f$ satisfies $\mathrm{(i)}$,  $\mathrm{(ii)}$ and $\mathrm{(iii')}$ on a symmetric open and convex cone $\Gamma \subsetneq\mathbb{R}^{n}$ as in \eqref{assumption of cone}.
		Assume	$\underline{u}$ is a $\mathcal{C}$-subsolution  and $u$ is a smooth solution of \eqref{nonlinear equation}. Then for each $k=0,1,2,\cdots$, we have 
		\begin{equation*}
			\| u\|_{C^{k}(M,\chi)}\leq C_{k},
		\end{equation*}
		where $C_{k}$ is a constant depending on $k$, $\underline{u}$, $h$, $Z$, $\omega$, 
		$f$, $\Gamma$ and $(M,\chi,J)$.
	\end{proposition}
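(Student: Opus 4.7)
The plan is to reduce Proposition \ref{main estimate 1} to three separate inputs: the zero order estimate already proved in Proposition \ref{Prop3.2}; the mixed second order bound of Theorem \ref{main estimate}; and a gradient bound obtained via a blowup/Liouville argument that is precisely where the stronger structural hypothesis $\mathrm{(iii')}$ together with $\Gamma_{n}\subset\Gamma$ enters. Once $\|u\|_{C^{2}}$ is under control, the equation becomes uniformly elliptic (since $f_{i}>0$ and $\mu(u)$ lies in a fixed compact subset of $\Gamma$), and an Evans--Krylov type argument in the almost Hermitian setting yields $C^{2,\alpha}$, after which higher regularity follows by differentiating the equation and applying Schauder theory on the linearized operator $L$ defined in \eqref{L}.

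More concretely, first I would apply Proposition \ref{Prop3.2} to get $\|u\|_{L^{\infty}}\leq C$, which also fixes the normalization $\sup_{M}u=-1$ used in Section \ref{second order estimate}. Theorem \ref{main estimate} then gives
\[
\sup_{M}|\nabla^{2}u|_{\chi}\leq C\bigl(1+\sup_{M}|\partial u|_{\chi}^{2}\bigr),
\]
so the whole content of the proposition rests on establishing
\[
\sup_{M}|\partial u|_{\chi}\leq C.
\]
The main obstacle, and the place where condition $\mathrm{(iii')}$ is indispensable, is this gradient estimate. I would argue by contradiction: if there were smooth solutions $u_{k}$ with $L_{k}:=\sup_{M}|\partial u_{k}|_{\chi}\to\infty$, pick maximum points $x_{k}$ and rescale both the metric and the function in a fixed chart around $x_{k}$, setting $\tilde{u}_{k}(z)=u_{k}\bigl(\exp_{x_{k}}(z/L_{k})\bigr)$. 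On balls of radius $L_{k}\to\infty$ the rescaled almost Hermitian data converge locally smoothly to the flat Kähler structure on $\mathbb{C}^{n}$, and the equation $F(\omega_{u_{k}})=h$ rescales in such a way that, using the uniform $C^{2}$-from-$C^{1}$ estimate of Theorem \ref{main estimate}, one obtains local $C^{1,1}$ bounds on $\tilde{u}_{k}$. After extracting a limit $\tilde{u}_{\infty}$ on $\mathbb{C}^{n}$ with $|\partial\tilde{u}_{\infty}|(0)=1$ and $\mu(\tilde u_{\infty})\in\bar\Gamma$ in the viscosity sense, the Liouville theorem \cite[Theorem 20]{Szekelyhidi18} forces $\tilde{u}_{\infty}$ to be constant, a contradiction. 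This Liouville step is exactly where $\mathrm{(iii')}$ and the assumption that the vertex of $\Gamma$ is at the origin with $\Gamma_{n}\subset\Gamma$ are used; under the weaker $(iii)$ it is not available, which is why Proposition \ref{main estimate 1} is stated under $\mathrm{(iii')}$.

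With $\|u\|_{C^{1}}\leq C$ in hand, Theorem \ref{main estimate} upgrades to $\|u\|_{C^{2}}\leq C$, and then $\mu(u)$ lies in a fixed compact set in $\Gamma$. The linearization $L$ of \eqref{nonlinear equation} becomes uniformly elliptic with bounded coefficients, and the concavity of $f$ in $\mu$ allows one to invoke the Evans--Krylov type estimate (in its complex/almost complex version, as used in the references cited after Theorem \ref{complex Hessian equation}) to obtain $\|u\|_{C^{2,\alpha}}\leq C$ for some $\alpha\in(0,1)$. Finally, differentiating \eqref{nonlinear equation} in a local frame, the derivatives $\partial_{k}u$ satisfy a linear elliptic equation with $C^{\alpha}$ coefficients, and standard Schauder bootstrapping together with the smoothness of $F$, $h$, $Z$, $\omega$, $\chi$, $J$ yields $\|u\|_{C^{k}}\leq C_{k}$ for every $k$, which is the claimed estimate.
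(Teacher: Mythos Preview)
Your proposal is correct and follows essentially the same route as the paper. The paper's own proof is a terse two-sentence version of exactly what you describe: invoke the estimate \eqref{HMW}, run a blow-up argument together with the Liouville theorem \cite[Theorem 20]{Szekelyhidi18} to obtain $\sup_{M}|\partial u|\leq C$, then apply the Evans--Krylov-type estimate of \cite{TWWY15} and bootstrap.
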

	
	\begin{proof}[Proof of Proposition \ref{main estimate 1}]
		With the estimate \eqref{HMW} at hand, a standard blow-up argument \cite[Proposition 5.1]{CHZ21} combining with Liouville theorem \cite[Theorem 20]{Szekelyhidi18} (see also \cite{DK17,TW17,TW19,STW17}), we conclude $\sup_{M}|\partial u|\leq C$. Although the appearance of the term $Z$ which depends on $\partial u$  linearly, it does not matter under the rescaling procedure.  The more details can be found in \cite[\S 5]{CHZ21}. 
		
		\medskip
		We can then apply the Evans-Krylov-type estimate (see \cite[Theorem 1.1]{TWWY15} and \cite[\S 5]{CHZ21}). The higher estimates can be obtained by applying a standard bootstrapping
		argument, we shall omit the standard step here. 
	\end{proof}
	\subsection{Proof of Theorems  \ref{n-1 MA equation}-\ref{complex Hessian equation}}
	We remark that equation \eqref{sove n-1 MA equation} and equation \eqref{complex Hessian equation} satisfying the structural conditions (i), (ii) and (iii'). Using Proposition \ref{main estimate 1} and a similar arguments in the proof of \cite[Theorem 1.1]{CTW19} and \cite[Theorems 1.2-1.3]{CHZ21}, we obtain Theorems \ref{n-1 MA equation}-\ref{complex Hessian equation}. 
	\qed

	\section{Proofs of Corollary \ref{DHYM estimates}}\label{applications}

	In this section, we prove Corollary \ref{DHYM estimates}. First, we give the $C^{1}$ estimates of the dHYM equation \eqref{DHYM2}.
	
	\begin{prop}\label{C1 DHYM}
		Let $u$ (resp. $\underline{u}$) be the solution (resp. $\mathcal{C}$-subsolution) for \eqref{DHYM2} with $\sup_{M}(u-\underline{u})=0$. Then we have
		\begin{equation*}\label{}
			\|u\|_{C^{1}}\leq C,
		\end{equation*}
		where $C$ depending on $\underline{u}$, $h$, $\|\omega\|_{C^{1}}$,  $\Gamma$ and $(M,\chi,J)$.
	\end{prop}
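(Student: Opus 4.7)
The plan is to bound $|\partial u|$ by the maximum principle applied to a test function of the form
\[
Q = \log |\partial u|^{2} + \varphi(u),
\]
for a carefully chosen function $\varphi$ (for instance $\varphi(t) = e^{-At}$ with $A \gg 1$ to be fixed). Note first that Proposition \ref{Prop3.2} applies to the dHYM setup, since its structural hypotheses (i), (ii) and the $\mathcal{C}$-subsolution property all hold here by Lemma \ref{CPW} and Lemma \ref{subsolution}; hence $\|u\|_{C^{0}}\leq C$ and the term $\varphi(u)$ contributes only a bounded oscillation. It then suffices to bound $|\partial u|^{2}$ at a maximum point $x_{0}$ of $Q$, so we may assume $K := |\partial u|^{2}(x_{0})$ is very large.

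At $x_{0}$, the critical condition $e_{i}Q = 0$ yields $e_{i}(|\partial u|^{2})/|\partial u|^{2} = -\varphi'(u)\,u_{i}$, and the maximum principle gives $L(Q)(x_{0})\leq 0$. For the lower bound on $L(|\partial u|^{2})$ I would directly invoke Lemma \ref{lower bound of L de u}, which is proved in the general framework and yields
\[
L(|\partial u|^{2}) \geq \tfrac{3}{4}\sum_{j} F^{i\bar{i}}\bigl(|e_{i}e_{j}u|^{2} + |e_{i}\bar{e}_{j}u|^{2}\bigr) - CK\mathcal{F}.
\]
Expanding $L(Q)(x_{0})\leq 0$ and using the critical point identity to rewrite
$F^{i\bar i}|e_{i}(|\partial u|^{2})|^{2}/|\partial u|^{4} = (\varphi'(u))^{2}F^{i\bar i}|u_{i}|^{2}$,
the principal terms surviving are the nonnegative second-order contributions from $L(|\partial u|^{2})/|\partial u|^{2}$, the term $(\varphi''(u) - (\varphi'(u))^{2})F^{i\bar i}|u_{i}|^{2}$ of definite sign for our choice of $\varphi$, and $\varphi'(u)L(u)$.

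The key step is to handle $\varphi'(u)L(u)$ via the $\mathcal{C}$-subsolution. I would apply Proposition \ref{prop subsolution} to $A = \omega_{u}(x_{0})$, whose eigenvalues lie on $\partial\Gamma^{h(x_{0})}$, and $B = \omega_{\underline{u}}(x_{0})$. This gives two alternatives: either \emph{(a)} $\sum F^{i\bar j}(\tilde g_{i\bar j}(\underline u) - \tilde g_{i\bar j}(u))(x_{0}) > \theta \mathcal F$, or \emph{(b)} $F^{i\bar i}(x_{0}) > \theta \mathcal F$ for every $i$. In case (a), the difference $\tilde g_{i\bar j}(u) - \tilde g_{i\bar j}(\underline u)$ equals the linearized expression $\partial\bar\partial(u-\underline u)_{i\bar j} + Z^{p}_{i\bar j}(u-\underline u)_{p} + \overline{Z^{p}_{i\bar j}}(u-\underline u)_{\bar p}$, so the alternative rewrites as $L(u-\underline u)(x_{0}) < -\theta\mathcal F$, and $\varphi'(u) < 0$ delivers a dominant negative contribution to $L(Q)(x_{0})$ once $A$ is taken large. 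In case (b), uniform ellipticity $F^{i\bar i}\gtrsim\mathcal F$ lets the $\varphi''(u)F^{i\bar i}|u_i|^{2}$ term dominate the error $-CK\mathcal F$ as soon as $A^{2}e^{-Au}\gg C$. Either way, the inequality $L(Q)(x_{0})\leq 0$ is violated for $K$ larger than a computable constant.

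The main obstacle will be the bookkeeping of the torsion-type lower-order terms generated both by the Lie-bracket commutators $[e_{i},\bar{e}_{j}]^{(0,1)}$, coming from the almost complex structure, and by the gradient tensor $Z(\partial u)$ present in the linearized operator $L$ defined in \eqref{L}. These contribute additional terms of order $|\partial u|\cdot|\nabla^{2}u|$ to $L(|\partial u|^{2})$ and to the commutator corrections, which must be absorbed into the positive quadratic expression $\sum F^{i\bar i}(|e_{i}e_{j}u|^{2} + |e_{i}\bar{e}_{j}u|^{2})$ via Cauchy--Schwarz, exactly as in the proof of Lemma \ref{lower bound of L de u}. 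Once this absorption is carefully executed, the two alternatives of Proposition \ref{prop subsolution} close the argument and yield the desired gradient bound.
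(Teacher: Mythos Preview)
Your overall strategy---maximum principle on a test function combining $|\partial u|^{2}$ with a potential term, then invoking the $\mathcal{C}$-subsolution alternative of Proposition \ref{prop subsolution}---is the right one and matches the paper's. However, the execution has two genuine gaps.

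First, the additive test function $Q=\log|\partial u|^{2}+\varphi(u)$ with $\varphi(t)=e^{-At}$ does not close. After the critical-point substitution the coefficient of $F^{i\bar i}|u_{i}|^{2}$ is $\varphi''-(\varphi')^{2}=A^{2}e^{-Au}(1-e^{-Au})$, which is \emph{negative} whenever $u<0$; under the normalization $\sup_{M}(u-\underline u)=0$ and the $C^{0}$ bound this is exactly the regime one is in. So in your Case (b) this term works against you, and your sentence ``of definite sign for our choice of $\varphi$'' is false. The paper sidesteps this by taking the \emph{multiplicative} quantity $Q=e^{H(\eta)}|\partial u|^{2}$ with $\eta=\underline u-u$ and a double exponential $H(\eta)=\tfrac{1}{3}e^{D\eta}$; the analogous coefficient is then $D(1+H)>0$, and the free parameter $D$ can be chosen large enough to beat the fixed ratio $C/\theta$ in Case (a). Relatedly, using $\varphi(u)$ rather than $\varphi(\underline u-u)$ means that in Case (a) you only control $L(u-\underline u)$, while the term appearing is $\varphi'(u)L(u)$; the discrepancy $L(\underline u)=O(\mathcal F)$ competes directly with $\theta\mathcal F$ and cannot be absorbed by taking $A$ large. (Your phrase ``dominant negative contribution to $L(Q)$'' is also backwards: you need a large \emph{positive} term to contradict $L(Q)\le 0$.)

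Second, and more fundamentally, you are missing the one dHYM-specific fact that makes a direct $C^{1}$ estimate feasible here: for $f=\cot\phi$ one has $f_{i}=c/(1+\mu_{i}^{2})$ with $c$ bounded, hence $\sum_{i}f_{i}\mu_{i}$ is uniformly bounded independently of $\mathcal F$. The paper uses this twice: once to control the cross term $\tfrac{2}{|\partial u|^{2}}\sum_{i}F^{i\bar i}(\mu_{i}-g_{i\bar i})\mathrm{Re}\{\eta_{i}u_{\bar i}\}$ coming from the multiplicative expansion, and once in Case (b) to obtain $L(\eta)\ge -C$ (a constant, not merely $-C\mathcal F$), which is what lets the term $\tfrac{D}{4}\sum F^{i\bar i}|\eta_{i}|^{2}\ge \tfrac{D\theta\tau}{4}|\partial\eta|^{2}$ force the gradient bound. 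Your sketch treats the equation as generic and never invokes this structure; without it neither alternative closes.
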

	
	\begin{proof}
		Let us define \[H(\eta)=\frac{1}{3}e^{D\eta},\qquad \eta=\underline{u}-u.\] Here $D>0$ are certain constants to be picked up later.\footnote{From now on, the $C$ below denotes the constants those may change from line to line, and it doesn't depend on $D$ that we yet to choose.} Consider the test function
		\[Q= e^{H(\eta)}|\partial  u|^{2}.\]
		Suppose $Q$ achieves maximum at the $x_{0}\in M$. We may assume 
		$|\partial u|(x_{0})\geq 1.$ Otherwise we are done.
		Then near $x_{0}$, we can choose a proper local frame $\{e_{i}\}_{i=1}^{n}$ such that $\chi_{i\bar{j}}=\delta_{ij}$ and the matrix $\big\{\tilde{g}_{i\bar{j}}\big\}$ is diagonal at $x_{0}$. It follows from maximum principle that
		\begin{equation}\label{3.9}
			\begin{split}
				0\geq \frac{L(Q)(x_{0})}{DH e^{H}|\partial  u|^{2}}
				=&L(\eta)+D(1+H)G^{i\bar{i}}|\eta_{i}|^{2}+\frac{L(|\partial  u|^{2})}{DH|\partial  u|^{2}}\\
				&+\frac{2}{|\partial  u|^{2}}\sum_{i,j}G^{i\bar{i}}\textrm{Re}\big\{e_{i}(\eta)\bar{e}_{i}e_{j}(u)\bar{e}_{j}(u)
				+e_{i}(\eta)\bar{e}_{i}\bar{e}_{j}(u)e_{j}(u)\big\}.
			\end{split}
		\end{equation}
		By a similar argument to Lemma \ref{lower bound of L de u}, we get
		\begin{lemma}\label{lower bound of L de u DHYM}
			At $x_{0}$, we have, for every $\ve\in(0, \frac{1}{2})$,
			\begin{equation*}
				L(|\partial u|^{2}) \geq (1-\ve) \sum_{j}G^{i\bar{i}}(|e_{i}e_{j}u|^{2}+|e_{i}\bar{e}_{j}u|^{2})-\frac{C}{\ve}|\partial u|^{2}\mathcal{G}.
			\end{equation*}
		\end{lemma}
		Dividing by $DH|\partial  u|^{2}$, we have
		\begin{equation}\label{3.7C1}
			\begin{split}
				\frac{L(|\partial  u|^{2})}{DH|\partial  u|^{2}}\geq&  (1-\varepsilon) \sum_{i,j}G^{i\bar{i}}\frac{|e_{i}e_{j}u|^{2}+|e_{i}\bar{e}_{j}u|^{2}}{DH|\partial  u|^{2}}- \frac{C\mathcal{G}}{DH\varepsilon}.
			\end{split}
		\end{equation}
		For the last term of (\ref{3.9}). Note that $\ve\in (0,\frac{1}{2}]$ implies $1\leq (1-\varepsilon)(1+2\varepsilon)$. Using the definition of Lie bracket again,  we see
		\begin{equation}\label{3..9}
			\begin{split}
				2\sum_{i,j}&G^{i\bar{i}}\textrm{Re}\{e_{i}(\eta)
				\bar{e}_{i}e_{j}(u)\bar{e}_{j}(u)\}\\
				\stackrel{}{=}& 2\sum_{i,j}G^{i\bar{i}}\textrm{Re}\Big\{\eta_{i}
				u_{\bar{j}}\big\{e_{j}\bar{e}_{i}(u)-[e_{j},\bar{e}_{i}]^{0,1}(u)
				-[e_{j},\bar{e}_{i}]^{1,0}(u)\big\}\Big\}\\
				=&  2\sum_{i}G^{i\bar{i}}(\mu_{i}-g_{i\bar{i}})\textrm{Re}\big\{\eta_{i}
				u_{\bar{i}}\big\}-2\sum_{i,j}G^{i\bar{i}}\textrm{Re}\big\{\eta_{i}
				u_{\bar{j}}[e_{j},\bar{e}_{i}]^{1,0}(u)\big\}\\
				\geq&  2\sum_{i}G^{i\bar{i}}(\mu_{i}-g_{i\bar{i}})\textrm{Re}\{\eta_{i}
				u_{\bar{i}}\}-\varepsilon DH|\partial  u|^{2}\sum_{i}G^{i\bar{i}}|\eta_{i}|^{2}
				-\frac{C}{DH\varepsilon}|\partial  u|^{2}\mathcal{G}
			\end{split}
		\end{equation}
		and
		\begin{equation}\label{3..10}
			\begin{split}
				2\sum_{i,j}&G^{i\bar{i}}\textrm{Re}\big\{e_{i}(\eta)\bar{e}_{i}\bar{e}_{j}(u)e_{j}(u)\big\} \\
				\geq & -\frac{(1-\varepsilon)}{DH}\sum_{i,j}G^{i\bar{i}}|\bar{e}_{i}\bar{e}_{j}(u)|^{2}
				-(1+2\varepsilon)DH|\partial  u|^{2}\sum_{i}G^{i\bar{i}}|\eta_{i}|^{2}.
			\end{split}
		\end{equation}
		It follows from (\ref{3..9}) and (\ref{3..10}) that
		\begin{equation}\label{3.12}
			\begin{split}
				 \frac{2}{|\partial  u|^{2}}\sum_{i,j}&G^{i\bar{i}}\textrm{Re}\big\{e_{i}(\eta)\bar{e}_{i}e_{j}(u)\bar{e}_{j}(u)
				+e_{i}(\eta)\bar{e}_{i}\bar{e}_{j}(u)e_{j}(u)\big\}\\
				\geq &\frac{2}{|\partial  u|^{2}}\sum_{i}G^{i\bar{i}}(\mu_{i}-g_{i\bar i})\textrm{Re}\{e_{i}(\eta)
				\bar{e}_{i}(u)\}-\frac{C\mathcal{G}}{DH\varepsilon}\\&
				-(1+3\ve)DH \sum_{i}G^{i\bar{i}}|\eta_{i}|^{2}
				-(1-\varepsilon)\sum_{i,j}G^{i\bar{i}}\frac{|\bar{e}_{i}\bar{e}_{j}(u)|^{2}}{DH|\partial  u|^{2}}.
			\end{split}
		\end{equation}
		Combining (\ref{3.9}), (\ref{3.7C1}) and (\ref{3.12}), and letting $\ve=\frac{1}{6H(x_{0})}$,
		\begin{equation*}
			\begin{split}
				&L(\eta)+ \frac{2}{|\partial  u|^{2}}\sum_{i}G^{i\bar{i}}(\mu_{i}-g_{i\bar{i}})\textrm{Re}\{\eta_{i}
				u_{\bar{i}}\}
				+\frac{D}{2}
				\sum_{i}G^{i\bar{i}}|\eta_{i}|^{2}
				\leq \frac{C}{DH|\partial  u|}+ \frac{C\mathcal{G}}{D}.
			\end{split}
		\end{equation*}
		By the assumption $|\partial  u|\geq \max\{1,|\partial  \underline{u}|\}$, we obtain
		\[
		\begin{split}
			\frac{2}{|\partial  u|^{2}}\sum_{i}&G^{i\bar{i}}(\mu_{i}-g_{i\bar{i}})\textrm{Re}\{\eta_{i}
			u_{\bar{i}}\} \\ \geq &   -\frac{D}{4}\sum_{i}G^{i\bar{i}}|\eta_{i}|^{2}
			-\frac{C}{D|\partial  u|^{2}}\sum_{i}\frac{(\mu_{i}-1)^{2}}{1+\mu_{i}^{2}}\\
			\geq &   -\frac{D}{4}\sum_{i}G^{i\bar{i}}|\eta_{i}|^{2}
			-\frac{C}{D|\partial  u|^{2}}.
		\end{split}
		\]
		Hence,
		\begin{equation}\label{3.11}
			\begin{split}
				&L(\eta)
				+\frac{D}{4}
				\sum_{i}G^{i\bar{i}}|\eta_{i}|^{2}
				\leq \frac{C}{DH|\partial  u|}+ \frac{C\mathcal{G}}{D}+\frac{C}{D|\partial  u|^{2}}.
			\end{split}
		\end{equation}
		There are two possibilities:
		\begin{itemize}
			\item If \eqref{property 1 of sub} holds.
			It follows from (\ref{3.11}) that
			\[
			\theta
			+\theta\mathcal{G}\leq  \frac{C}{DH|\partial  u|}+ \frac{C\mathcal{G}}{D}+\frac{C}{D|\partial  u|^{2}}.
			\]
			Choose $D$ large such that $\theta>\frac{C}{D}$. 
			Then we get
			\begin{equation*}\label{3.14}
				\theta\leq \frac{C}{DH|\partial  u|}+\frac{C}{D|\partial  u|^{2}}.
			\end{equation*}
			This implies $|\partial  u|\leq C$.

			\vspace{0.3cm}
			\item If \eqref{property 2 of sub} is true.  By \eqref{lower bound of F}, we have
			$ G^{1\bar{1}}\geq \theta\mathcal{G}\geq \theta\tau. $ Therefore,
			\[
			\sum_{i}G^{i\bar{i}}|\eta_{i}|^{2}\geq \theta\tau|\partial  \eta|^{2},
			\]
			and 
			\begin{equation*}
				L(\eta)=G^{i\bar i}((g_{i\bar i}+\underline{u}_{i\bar i})- \mu_{i})\geq -C-C\sum_{i}\frac{|\mu_{i}|}{1+\mu_{i}^{2}}\geq -C.
			\end{equation*}
			Plugging the above two inequalities into (\ref{3.11}),
			\[
			\frac{D}{C}|\partial  \eta|^{2}\leq \frac{C}{DH|\partial  u|}+C.
			\]
			We may assume that $|\partial u|\geq 2|\partial \underline{u}|$ and then $|\partial \eta|\geq \frac{1}{2}|\partial u|$. So
			\[
			\frac{D}{C}|\partial u|^{2}\leq \frac{C}{DH|\partial  u|}+C.
			\]
			As a consequence, $|\partial u|\leq C$.
		\end{itemize}
		
		\medskip
		Combining the Theorem \ref{main estimate}, we establish the second order estimates. Therefore, the equation \eqref{DHYM} is uniform elliptic. Based on Evans-Krylov theory, we obtain the higher order estimates. This completes the proof of Corollary \ref{DHYM estimates}.
	\end{proof}


\begin{thebibliography}{99}
		
		\bibitem{Andrews94} Andrews, B. {\em Contraction of convex hypersurfaces in Euclidean space}, Calc. Var. Partial Differential Equations {\bf 2} (1994), no. 2, 151--171.
		
		\bibitem{Blocki05} B\l ocki, Z. {\em On uniform estimate in Calabi-Yau theorem}, Sci. China Ser. A {\bf 48} (2005), suppl., 244--247.
		
		\bibitem{Blocki11} B\l ocki, Z. {\em On the uniform estimate in the Calabi-Yau theorem, II}, Sci. China Math. {\bf 54} (2011), no. 7, 1375--1377.
		
		
		\bibitem{Calabi57} Calabi, E. {\em On K\"ahler manifolds with vanishing canonical class}, Algebraic geometry and topology. A symposium in honor of S. Lefschetz, pp. 78--89. Princeton University Press, Princeton, N. J., 1957.
		
	
		\bibitem{Cherrier87} Cherrier, P. {\em \'Equations de Monge-Amp\`ere sur les vari\'et\'es Hermitiennes compactes}, Bull. Sc. Math. (2) {\bf 111} (1987), 343--385.
		
		
		
		\bibitem{CHZ21} Chu, J., Huang, L., Zhang, J. {\em  Fully nonlinear elliptic equations on compact almost Hermitian manifolds}, arXiv:2109.12566. 
		
		\bibitem{CHZ17} Chu, J., Huang, L., Zhu, X. {\em The 2-nd Hessian type equation on almost Hermitian manifolds}, arXiv:1707.04072.
		
		
		\bibitem{CM21} Chu, J., McCleerey, N. {\em Fully nonlinear degenerate elliptic equations in complex geometry}, J. Funct. Anal. {\bf 281}  (2021), no. 9, Paper No. 109176, 45 pp.
		
		
		
		\bibitem{CTW19} Chu, J., Tosatti, V., Weinkove, B. {\em The Monge-Amp\`ere equation for non-integrable almost complex structures}, J. Eur. Math. Soc. (JEMS) {\bf 21} (2019), no. 7, 1949--1984.
		\bibitem{CJY} {Collins, T., Jacob, A., Yau, S-T.} {\em (1,1) forms with specified Lagrangian phase: a priori estimates and algebraic obstructions,}  Camb. J. Math. {\bf 8} (2020), no. 2, 407--452. 
		
		\bibitem{CPW} Collins, T.,  Picard, S. Wu, X. {\em Concavity of the Lagrangian phase operator and applications}, Calc. Var. Partial Differential Equations {\bf 56} (2017), no. 4,   56--89.
		\bibitem{DK17} Dinew, S., Ko\l odziej, S. {\em Liouville and Calabi-Yau type theorems for complex Hessian equations}, Amer. J. Math. {\bf 139} (2017), no. 2, 403--415.
		
		
		
		\bibitem{EH89} Ecker, K., Huisken, G. {\em Immersed hypersurfaces with constant Weingarten curvature}, Math. Ann. {\bf 283} (1989), no. 2, 329--332.
		
		
		
		\bibitem{FGZ} Feng, K., Ge, H.,  Zheng, T. {\em The Dirichlet problem of fully nonlinear equations on Hermitian manifolds}, preprint, arXiv:1905.02412.
		
		\bibitem{FWW10} Fu, J., Wang, Z., Wu, D. {\em Form-type Calabi-Yau equations}, Math. Res. Lett. {\bf 17} (2010), no. 5, 887--903.
		
		\bibitem{FWW15} Fu, J., Wang, Z., Wu, D. {\em Form-type Calabi-Yau equations on K\"ahler manifolds of nonnegative orthogonal bisectional curvature}, Calc. Var. Partial Differential Equations {\bf 52} (2015), no. 1-2, 327--344.
		
		
		\bibitem{GP}   Gauduchon, P. {\em La 1-forme de torsion d'une vari\'et\'e hermitienne compacte}, Math. Ann., \textbf{267} (1984),
		495--518.
		
		\bibitem{Gerhardt96} Gerhardt, C. {\em Closed Weingarten hypersurfaces in Riemannian manifolds}, J. Differential Geom. {\bf 43} (1996), no. 3, 612--641.
		
		\bibitem{Guan14} Guan, B. {\em Second-order estimates and regularity for fully nonlinear elliptic equations on Riemannian manifolds}, Duke Math. J. {\bf 163} (2014), no. 8, 1491--1524.
		
		\bibitem{GL10} Guan, B., Li, Q. {\em Complex Monge-Amp\`ere equations and totally real submanifolds}, Adv. Math. {\bf 225} (2010), no. 3, 1185--1223.
		
		\bibitem{GN} Guan, B., Nie, X. {\em Fully nonlinear elliptic equations with gradient terms on Hermitian manifolds,} preprint, arXiv:2108. 03308. 2021.
		\bibitem{GQR} Guan, B., Qiu, C., Yuan, R. {\em Fully nonlinear elliptic equations for conformal deformations of Chern-Ricci forms.} Adv. Math. {\bf 343} (2019), 538–566.
		
		
		
		\bibitem{Hanani96} Hanani, A. {\em \'{E}quations du type de Monge-Amp\`{e}re sur les vari\'{e}t\'{e}s hermitiennes compactes}, J. Funct. Anal. {\bf 137} (1996), no.1, 49--75.
		
		
		
		\bibitem{Hou09} Hou, Z. {\em Complex Hessian equation on K\"ahler manifold}, Int. Math. Res. Not. IMRN 2009, no. 16, 3098--3111.
		
		\bibitem{HMW10} Hou, Z., Ma, X.-N., Wu, D. {\em A second order estimate for complex Hessian equations on a compact K\"ahler manifold}, Math. Res. Lett. {\bf 17} (2010), no. 3, 547--561.
		\bibitem{HZZ} Huang, L., Zhang, J., Zhang, X. {\em The deformed Hermitian-Yang-Mills equation on almost Hermitian manifolds,} 	Sci. China Math. \textbf{65} (2022), no. 1, 127–-152. 
		
		\bibitem{Leung97}  Leung, N.C. {\em Einstein Type Metrics and Stability on Vector Bundles}, J. Differential
		Geom. {\bf 45} (1997), 514--546.
		
		\bibitem{Leung98}  Leung, N.C. {\em Symplectic Structures on Gauge Theory}, Commun. Math. Phys.
		\textbf{193} (1998), 47--67.
		
		
		
		\bibitem{Lin20} Lin, C. {\em Deformed Hermitian-Yang-Mills equation on compact Hermitian manifolds}, 	arXiv:2012.00487. 
		
		\bibitem{JY} Jacob, A., Yau, S-T. {\em A special Lagrangian type equation for holomorphic line bundles}, Math. Ann. {\bf 369} (2017), no. 1-2, 869--898.
		
		
		\bibitem{PV} {Pingali, V.} {\em A priori estimates for a generalized Monge-Amp\`ere PDE on some compact K\"{a}hler manifolds},   Complex Var. Elliptic Equ. {\bf 64} (2019), no. 3, 503--518.
		
		\bibitem{PV1} {Pingali,  V.} {\em The deformed Hermitian Yang-Mills equation on Three-folds},  arXiv:1910.01870.
		
		\bibitem{Popovici15} Popovici, D. {\em Aeppli cohomology classes associated with Gauduchon metrics on compact complex manifolds}, Bull. Soc. Math. France {\bf 143} (2015), no. 4, 763--800.
		
		
		\bibitem{Spruck05} Spruck, J. {\em Geometric aspects of the theory of fully nonlinear elliptic equations}, Global theory of minimal surfaces, Amer. Math. Soc., Providence, RI, 2005, 283--309.
		
		
		
		
		\bibitem{Szekelyhidi18} Sz\'ekelyhidi, G. {\em Fully non-linear elliptic equations on compact Hermitian manifolds}, J. Differential Geom. {\bf 109} (2018), no. 2, 337--378.
		
		\bibitem{STW17} Sz\'ekelyhidi, G., Tosatti, V., Weinkove, B. {\em Gauduchon metrics with prescribed volume form}, Acta Math. {\bf 219} (2017), no. 1, 181--211.
		
		\bibitem{TWWY15} Tosatti, V., Wang, Y., Weinkove, B., Yang, X. {\em $C^{2,\alpha}$ estimates for nonlinear elliptic equations in complex and almost complex geometry}, Calc. Var. Partial Differential Equations {\bf 54} (2015), no. 1, 431--453.
		
		\bibitem{TW10a} Tosatti, V., Weinkove, B. {\em Estimates for the complex Monge-Amp\`ere equation on Hermitian and balanced manifolds}, Asian J. Math. {\bf 14} (2010), no. 1, 19--40.
		
		\bibitem{TW10b} Tosatti, V., Weinkove, B. {\em The complex Monge-Amp\`ere equation on compact Hermitian manifolds}, J. Amer. Math. Soc. {\bf 23} (2010), no. 4, 1187--1195.
		
		\bibitem{TW17} Tosatti, V., Weinkove, B. {\em The Monge-Amp\`ere equation for $(n-1)$-plurisubharmonic functions on a compact K\"ahler manifold}, J. Amer. Math. Soc. {\bf 30} (2017), no. 2, 311--346.
		
		\bibitem{TW19} Tosatti, V., Weinkove, B. {\em Hermitian metrics, $(n-1, n-1)$ forms and Monge-Amp\`ere equations}, J. Reine Angew. Math. {\bf 755} (2019), 67--101.
		
		\bibitem{TW20} Tosatti, V., Weinkove, B. {\em The complex Monge-Amp\`ere equation with a gradient term,} Pure Appl. Math. Q. {\bf 17} (2021), no. 3, 1005–1024.
		
		
		\bibitem{Yau78} Yau, S.-T. {\em On the Ricci curvature of a compact K\"ahler manifold and the complex Monge-Amp\`ere equation, I}, Comm. Pure Appl. Math. {\bf 31} (1978), no. 3, 339--411.
		
		\bibitem{YR}  Yuan, R. {\em On a class of fully nonlinear elliptic equations containing gradient terms on compact Hermitian manifolds,} Canad. J. Math. {\bf 70}
		(2018), no. 4, 943--960.
		
		\bibitem{YR20}  Yuan, R.  {\em Regularity of fully non-linear elliptic equations on Hermitian manifolds. II},   arXiv:2001.09238. 
		
		\bibitem{Zhang17} Zhang, D. {\em Hessian equations on closed Hermitian manifolds}, Pacific J. Math. {\bf 291} (2017), no. 2, 485--510.
		
		
		\bibitem{ZZ11} Zhang, X., Zhang, X. {\em Regularity estimates of solutions to complex Monge-Amp\`ere equations on Hermitian manifolds}, J. Funct. Anal. {\bf 260} (2011), no. 7, 2004--2026.
		
		
	\end{thebibliography}
\end{document}